\newcommand*{\ppo}{\textcolor{blue}}
\theoremstyle{plain}
\newtheorem{theorem}{Theorem}[section]
\newtheorem{definition}[theorem]{Definition}
\theoremstyle{definition}
\newtheorem{remark}[theorem]{Remark}
\newtheorem{lemma}[theorem]{Lemma}
\newtheorem{proposition}[theorem]{Proposition}
\newtheorem{lemmaf}[theorem]{Lemma}
\newtheorem{corollary}[theorem]{Corollary}
\newcommand{\E}{\ensuremath{\mathbb{E}}}    
\newcommand{\R}{\ensuremath{\mathbb{R}}}
\newcommand{\N}{\ensuremath{\mathbb{N}}}
\newcommand{\Rd}{\mathbb{R}^d}
\newcommand{\Gd}{\hat{G}_D}
\newcommand{\PhD}{{\hat{P}^D}}
\newcommand{\PD}{P^D}
\newcommand{\pD}{p_D}
\newcommand{\phD}{\hat{p}_D}
\newcommand{\pha}{\hat{p}}
\newcommand{\PP}{\ensuremath{\mathbb{P}}} 
\newcommand{\1}{\ensuremath{\mathbbm{1}}}
\newcommand{\dX}{\Delta X}
\newcommand{\LL}{\mathbb{L}}
\definecolor{rvwvcq}{rgb}{0.08235294117647059,0.396078431372549,0.7529411764705882}
\newcommand{\cadlag}{c\grave{a}dl\grave{a}g}
 \title[ Shot-down stable processes ]{ Shot-down stable processes}
 \author[K. Bogdan ]{ Krzysztof Bogdan }
 \address{
 Krzysztof Bogdan \newline
 \indent Department of Pure and Applied Mathematics \newline
 \indent Wroc\l aw University of Science and Technology, Wroc\l{}aw, Poland
 }
 \email{krzysztof.bogdan@pwr.edu.pl }
 \author[K. Jastrz\c{e}bski]{Kajetan Jastrz\c{e}bski }
 \address{
Kajetan Jastrz\c{e}bski \newline
 \indent Institute of Mathematics \newline
 \indent University of Wroc\l aw, Wroc\l aw, Poland
 }
 \email{kajetanjastrzebski6@gmail.com}
 \author[M. Kassmann]{Moritz Kassmann }
 \address{
Moritz Kassmann \newline
 \indent  Fakult\"{a}t f\"{u}r Mathematik\newline
 \indent Universit\"{a}t Bielefeld,
 Germany
 }
 \email{moritz.kassmann@uni-bielefeld.de}
  \author[M. Kijaczko]{ Micha\l{} Kijaczko }
 \address{
Micha\l{} Kijaczko \newline
 \indent Department of Pure and Applied Mathematics \newline
 \indent Wroc\l aw University of Science and Technology, Wroc\l{}aw, Poland
 }
 \email{michal.kijaczko@pwr.edu.pl}
 \author[P. Pop\l{}awski]{ Pawe\l{} Pop\l{}awski }
 \address{
Pawe\l{} Pop\l{}awski}
 \email{p.poplawski@protonmail.com}
\subjclass[2020]{
 35R09,   	
 31C25 (primary),   	
 60J35,   	
 60J76 (secondary)
 }
\thanks{K. Bogdan was partially supported by NCN grant 2017/27/B/ST1/01339. P. Pop\l{}awski and M.~Kijaczko were partially supported by the NCN grant 2014/14/M/ST1/00600. M. Kassmann gratefully acknowledges support by (a) the Foundation for Polish Science in form of a Alexander von Humboldt Polish Honorary Research Fellowship and (b) the German Science Foundation via CRC 1283.}
\keywords{shot-down process, fractional Laplacian, Dirichlet form, Green function, Harnack inequality. \textit{Data sharing:} not applicable as no data were generated or analysed during the study.}
\begin{document}

\begin{abstract}
The shot-down process is a strong Markov process which is annihilated, or shot down, when \textit{jumping over} or to the complement of a given open subset of a~vector space. Due to specific features of the shot-down time, such processes suggest new type of boundary conditions for 
nonlocal differential equations.
In this work we construct the shot-down process for the fractional Laplacian  in  Euclidean space. For smooth bounded sets $D$, we study its transition density and characterize Dirichlet form. We show that the corresponding Green function is comparable to that of the fractional Laplacian with Dirichlet conditions on $D$.
However,
for nonconvex $D$, the transition density of the shot-down stable process is incomparable with the 
Dirichlet heat kernel of the fractional Laplacian for $D$. 
\end{abstract}

\maketitle
\tableofcontents


\section*{Introduction}\label{Intro} 
Let $D$ be a nonempty open subset of $\R^d$.
The study of diffusions in $D$
naturally leads to the question what happens when the process reaches the boundary, $\partial D$. There are various options and nomenclatures, including \textit{killing}, \textit{absorption}, \textit{censoring}, \textit{resetting}, \textit{resurrection}, \textit{reflection}, \textit{diffusion along the boundary}, etc., which lead to diversified boundary value problems. The situation is similar for general Markov (jump) processes $X=(X_t, t \geq 0)$. Analysis of these problems has generated a lot of research in partial differential equations, potential theory and stochastic analysis, see, e.g., \cite{MR4176673}. 
The simplest option is to \textit{kill} the Markov process at the first exit time of $D$,
$$\tau_D := \inf{} \{ t 
>0: X_t \in D^c\},$$ 
where $D^c=\Rd\setminus D$. The \textit{killed process} on $D$ is then defined by
$$X^D(t) := \begin{cases} X_t & \mbox{for } t\in [0, \tau_D), \\ \partial & \mbox{for } t\in  [\tau_D,\infty). \end{cases} $$
Here $\partial$ is an arbitrary isolated point attached to $\Rd$, called \textit{cemetery} and we extend functions defined on $\Rd$ or $D$ by letting $f(\partial)=0$.

The aim of this work is to introduce and to study a new variant of killing.
To this end we define the \emph{shot-down time}, 
\begin{align*}
\sigma_D = \inf{} \{ t > 0: [X_{t^{-}}, X_t] \cap D^c \neq \emptyset \}\,.
\end{align*}
Here, as usual,  $X_{t^{-}}=\lim_{s\to t^{-}} X_s$, $X_{0-}=X_0$, and $[v,w]$ is the line segment between $v \in \R^d$ and $w \in \R^d$. Trivially, $\sigma_B \leq \sigma_D$ if $B\subset D$, $\sigma_D \leq \tau_D$, 
and $\sigma_D =  \tau_D$ if $D$ is convex. Further, if $D'$ is a connected component of $D$ containing $X_0$, then $\sigma_D=\sigma_{D'}$, therefore below we sometimes assume that $D$ is a \textit{domain}, i.e., a nonempty connected open subset of $\Rd$. Given $x \in D$, we define $D_x = \{y \in D : [x,y]\subset D \}$. 
Only jumps from $x \in D$ to $D_x$ are possible without shooting-down. 
\begin{figure}[h]
	\centering
		\hspace*{4ex}\includegraphics[width=0.3\textwidth]{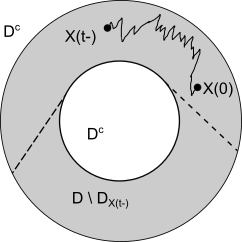}
		\caption{$D$
is an annulus in $\R^2$. The process $X$ is shot down when $[X_{t^{-}}, X_{t}]$ intersects the complement of $D$.}
		\label{fig:gull}
\end{figure}

Analogous to the killed process $X^D$, the \emph{shot-down process} $\hat{X}^D$ is defined by
$$\hat{X}^D(t) := \begin{cases} X_t & \mbox{for } t\in [0, \sigma_D), \\ \partial & \mbox{for } t \in [\sigma_D,\infty). \end{cases} $$
In particular, $\hat{X}^D(t)$ can neither visit $D^c$ nor $D\setminus D_{X_{t^-}}$, for $t\ge 0$.

So far we have not specified to which jump processes $X$ we apply the shooting-down procedure. In principle the construction is very general, but in this work we focus on the isotropic $\alpha$-stable L\'evy process 
\cite{MR3185174}.
The process is associated with the fractional Laplacian $\Delta^{\alpha/2}:=-(-\Delta)^{\alpha/2}$ as generator, see, e.g., \cite{MR3613319}. The resulting shot-down process will sometimes be called \textit{shot-down stable}. For many results, we also restrict ourselves to bounded smooth ($C^{1,1}$) sets $D$, 
see Definition~\ref{def:C11}. 
We leave largely open 
the case of less regular open sets, even Lipschitz, and more general 
Markov processes killed at the shot-down time.

\smallskip

The paper is organized as follows. In the Section \ref{sec:Preli} we present necessary definitions and elementary facts concerning the shot-down process. 
In Section~\ref{sec:Hunt} we define the heat kernel of the process 
and discuss the relationship between the heat kernels of the killed process and the shot-down process. 
In Section~\ref{sec:lk_bound}  we compare the killing measures of the two processes.
In Section \ref{sec:quad-forms} we compute the corresponding Dirichlet form. Sharp estimates of the Green function of the shot-down process are provided in Section~\ref{sec:Green}, which ends the paper.

\smallskip

Let us describe the results in more detail. One important aim of our work is to study the heat kernel $\phD$ of the shot-down process.  To this end, in Theorem \ref{thm:stopping-time} we first assert  that $\sigma_D$ is a stopping time  for general open sets $D$. In Theorem \ref{thm:symmetry} we show that $\phD$ is symmetric in the space variables. 
In Theorem \ref{thm:inc} we prove for bounded smooth $D$ that $\phD$ is comparable to the heat kernel $\pD$ of the killed process if and only if $D$ is convex. 

Then we study the quadratic form of the shot-down process for bounded $C^{1,1}$ domains $D$. For this, we first estimate the intensity of shooting down. Recall that, given $x \in D$, 
$D_x$ consists of all the points where the process can possibly jump from $x$ without being shot down. Denote
\begin{align}\label{eq:alpha-stable-levy-density}
  \nu(z)=
  \mathcal{A}_{d,\alpha}
|z|^{-d-\alpha}\,, \quad z\in \Rd\,,
\end{align}
where $\mathcal{A}_{d,\alpha} =  2^{\alpha}\Gamma((d+\alpha)/2)\pi^{-d/2}/|\Gamma(-\alpha/2)|$.
Since $\nu(y-x)$, $x,y \in \R^d$, is the jump intensity of the process $X$ from $x$ to $y$,  the shooting-down intensity for $D$ is defined by 
$$\iota_D(x) := \int\limits_{\Rd\setminus D_x} \nu(y-x)\,dy, \enspace x\in D.$$ In Theorem \ref{thm:intens-diff-bd} we prove a bound for the difference between the shooting-down intensity and the standard killing intensity for $D$, 
$$\kappa_D(x) := \int\limits_{\Rd\setminus D} \nu(y-x)\,dy, \enspace x \in D.$$
The result is used to establish a representation of the corresponding Dirichlet form in Theorem \ref{thm:Dirichlet-form} and to give a precise description of the domain of the form.

We then move on to the Green function. Here the shot-down stable process behaves like the killed stable process and the two respective Green functions are comparable for every bounded smooth domain $D$, which is proved in Theorem \ref{thm:Green-func-estim}. As mentioned before, such comparability in general fails for the corresponding heat kernels (Theorem \ref{thm:inc}); yet both results build on the study of $\phD$ in Section~\ref{sec:Hunt}. 
Our proofs use potential-theoretic tools typical of jump-type Markov processes, such as the Ikeda-Watanabe formula, but also employ a chaining method more common in the analysis of diffusions. In fact, certain aspects of the shot-down process—its dependence on connectedness and geometry, for instance—resemble diffusions more than jump processes. The analysis of the corresponding harmonic functions is also subtle; we make preliminary steps in this direction.

As a motivation for our study, we mention that there is considerable interest in applied sciences on modeling physical motion by \textit{L\'{e}vy flights}. For some microscopic and macroscopic objects, e.g., bacteria or sharks, it is observed that they migrate by trajectories that have straight stretches. The notion of shot-down processes should be relevant for such studies. To our best knowledge, the mathematical model of shot-down processes is new, but a related concept of \emph{visibility constrained jump process} was introduced in \cite{MR4437349}. As in our work, the visibility constrained jump process can jump from $x \in D$ only to $D_x \subset D$. However, the emphasis of \cite{MR4437349} is on functional inequalities rather than stochastic processes and the process is \textit{censored} rather then killed, should the line segment $[X_{t^{-}}, X_t]$ intersect the complement of $D$. Namely, the process  in \cite{MR4437349} is \textit{restarted} at $X_{t^{-}}$, so the \textit{censoring} is meant in the sense of \cite{MR2006232}.

Concluding the Introduction, we like to mention various approaches in literature to define \emph{reflection} from $D^c$  for jump processes and nonlocal operators, because reflection also uses the process $(X_{t^{-}},X_t)$; see
\cite{MR3217703}, \cite{MR3651008}, \cite{MR4245573} and \cite[Introduction]{MR4779596}. 

\subsection*{Acknowledgements} We are indebted to Mateusz Kwaśnicki for the discussion of the mean value property for the shot-down process.

\smallskip

\smallskip


\section{Preliminaries}\label{sec:Preli} 

In what follows $\Rd$ is the Euclidean space of dimension $d \geq 1$, $x \cdot y$ is the Euclidean scalar product of $x,y\in \Rd$, and $|y|$ is the length of $y$. For $x \in \Rd$ and $A,B \subset \Rd$, we let $\text{dist}(x,A) =\inf\{ |x-y|:y \in A \}$ and $\text{dist}(A,B) = \inf \{ |y-z|: y \in A, z \in B \}$. We define $A^c = \Rd \setminus A$ and, 
for $r>0$, $B_r(x)=B(x,r)=\{z\in \Rd: |z-x|<r\}$. 
\begin{definition}\label{def:C11}
An open set $D \subset \Rd$ is $C^{1,1}$ at scale $r>0$ if for every $Q \in \partial D$ there are balls $I = B(x', r) \subset D $ and $O = B(x'', r) \subset D^c$ tangent at $Q$. We call $I$ and $O$
the \emph{inner} and \emph{outer} ball, respectively and we call $r$ localization radius.
\end{definition}

A bounded open set $D \subset \Rd$ is $C^{1,1}$ if and only if its boundary can be represented locally as the graph of a function whose gradient is Lipschitz continuous \cite[Def. 1.1 and Lemma 2.2]{MR2286038}. Similarly, open set $D\subset \Rd$ is Lipschitz if its boundary is locally equal to the graph of a Lipschitz function\ppo{.}

All our functions, measures and sets are Borel either by construction or assumptions.
As usual, $dy$, $dx$ etc. stand for the Lebesgue measure on $\Rd$, and the considered integrals are assumed to be well defined, to wit, nonnegative or absolutely convergent.

Let  $0<\alpha<2$ and let $X=(X_t,t\geq 0)$ be the standard isotropic $\alpha$-stable L\'evy process in $\Rd$ \cite{MR3185174, MR3613319}. In particular, $X$ is right continuous with left limits.
The process is determined by the jump (L\'evy) measure with the density function \eqref{eq:alpha-stable-levy-density}.
The constant in \eqref{eq:alpha-stable-levy-density} is so chosen that
$$\int \limits_{\Rd} [1- \cos(\xi \cdot z)] \nu (z)\,dz = |\xi|^\alpha, \quad \xi \in \Rd.$$
For every $t>0$, we consider the continuous probability density $p_t: \Rd \to (0,\infty)$ with
$$\int \limits_{\Rd} p_t(x)e^{ix \cdot \xi}\,dx = e^{-t|\xi|^\alpha}, \quad \xi \in \Rd.$$
\noindent
Namely, we let
$$p_t(x) = (2\pi)^{-d}\int \limits_{\Rd} e^{-ix\cdot\xi} e^{-t|\xi|^\alpha}\,d\xi, \quad t>0, \ x \in \Rd.$$
\noindent
The function has the {\it scaling} property:
$$p_t(x) = t^{-d/\alpha}p_1(t^{-1/\alpha}x), \quad \text{ or }  \quad p_{r^\alpha t}(rx) = r^{-d}p_t(x), \quad t>0,\ r>0,\ x\in \Rd.$$
Furthermore, $p_t$ form a convolution semigroup of probability densities.
It is well known that for $\alpha  = 1$,
$$p_t(x) = \frac{C_dt}{(t^2 + |x|^2)^{(d+1)/2}}, \quad t>0, \ x \in \Rd,$$ where $C_d = \Gamma((d+1)/2)/\pi^{(d+1)/2}$, and, for any  $\alpha\in (0,2)$,
\begin{equation}
    \label{pt:approx}
    p_t(x) \approx t^{-d/\alpha} \land \frac{t}{|x|^{d+\alpha}}, \quad t>0, \ x \in \Rd.
\end{equation}
Here $\land$ stands for the minimum and \eqref{pt:approx} means that there is a number $c\in (0,\infty)$ (i.e., a \textit{constant}, depending on $d$ and $\alpha$)  such that  $c^{-1}p_t(x) \leq t^{-d/\alpha} \land \frac{t}{|x|^{d+\alpha}} \leq cp_t(x)$. This notation for \textit{comparability} is used throughout the paper.
We denote $$p(t,x,y)=p_t(x,y)=p_t(y-x).$$ The process $X_t$ is Markovian
with the time-homogeneous transition probability
$$(t,x,A) \mapsto \int \limits_{A} p(t,x,y)\,dy, \quad t>0,\, x \in \Rd, A \subset \Rd.$$
We let $\PP_x$ and  $\E_x$ be the law and expectation for the process starting at $x\in \Rd$, respectively.
\noindent
We consider the operator semigroup $\{P_t, t\geq0 \}$ on bounded or nonnegative function $f$.
Thus, for $x\in \Rd$, $P_t f(x) =\E_x f(X_t)$ and
$$P_t f(x) = \int \limits_{\Rd} p(t,x,y)f(y)\,dy, \quad t>0,$$
see ~\cite[Section 2]{MR2794975} for a direct construction of the semigroup.
\noindent
We will be interested in semigroups obtained by {\it killing} $X$ at suitable stopping times, chiefly at the shot-down time for open sets $D \subset \Rd$.
\noindent
Recall that $X$ is defined on the measurable space $\Omega$ of c\`adl\`ag paths: $[0,\infty)\to \Rd$  \cite{MR3185174}, so that $X_t=X_t(\omega)$ for $t>0, \omega \in \Omega$. As usual, we suppress $\omega$ from the notation. 

In the Introduction we defined $\tau_D$, the first exit time from $D$, $X^D$, the process killed upon exiting  $D$, $\sigma_D$, the shot-down time,  and $\hat{X}^D$, the shot-down process.
We now introduce the corresponding operator semigroups,
$$P^D_tf(x) = \E_x[t<\tau_D; f(X_t)]$$
and 
$$\hat{P}^D_tf(x) = \E_x[t<\sigma_D; f(X_t)], \quad t\ge 0,\, x \in \Rd.$$
Here, as usual, $f$ is nonnegative or absolutely integrable and $\E[A; F]=\int_A F d\PP$.

\subsection{Progressive measurability}\label{subsec:meas} 
The first time the process $X$ flies over the complement of $D$, that is the shot-down time, is indeed a stopping time. The proof depends on the debut theorem for progressively measurable processes.

\begin{theorem}\label{thm:stopping-time}$\sigma_D = \inf{} \{ t {>} 0: [X_{t^{-}}, X_t] \cap D^c \neq \emptyset \}$ is a stopping time.
\end{theorem}
\begin{proof}
Of course, $X$ is $c \grave {a}dl \grave{a}g$ and $X_{t^{-}}$ is $c \grave {a}gl \grave{a}d$. By \cite[Theorem 1, p. 38]{MR648601}, each process is progressively measurable, and so is
the pair $(X_t, X_{t^{-}})$.
We easily see that $F(D):=\{ (x,y) \in \Rd \times \Rd: [x,y] \cap D^c \neq \emptyset \}$
is closed, so Borel. Of course, the first {hitting of $F(D)$ by} $(X_t, X_{t^{-}})$  is $\sigma_D$, hence $\sigma_D$ is a stopping time by \cite[Theorem 2.{4}]{MR2606507}; see also \cite{MR2788890} and \cite[Subsection 1.1]{bogdan2023shotdown}.
\end{proof}

\subsection{Ikeda-Watanabe formula}\label{IWF}

For the rest of the paper, we return to the  setting of the fractional Laplacian in the Euclidean space $\Rd$. Let $D\subset \Rd$ be open (we will make additional assumptions on $D$ later on).
In this section we prove the Ikeda-Watanabe-type formula \eqref{gwiazdka} for the shot-down stable processes. To this end we use the so-called L\'evy system for $X$ and follow the presentation for the killed process in \cite[Section~4.2]{MR3737628}.
Note that $\PhD_t \mathbbm{1}_A(x) \leq \PD_t \mathbbm{1}_A(x)$ for all $A \subset \Rd$. By the Radon-Nikodym theorem, for all $x \in \Rd,\ t>0$ and functions $f \geq 0$, we can write
$$\PhD_t f(x) = \int \limits_{\Rd} \phD(t,x,y)f(y)\,dy,$$
where $\phD(t,x,y)$ is defined  for almost all $y$ and $\phD(t,x,y) \leq \pD(t,x,y) \leq p_t(x,y)$. Here $\pD$ is the Dirichlet heat kernel of $D$ for $\Delta^{\alpha/2}$, discussed in Section~\ref{sec:Hunt}.
We may call $\phD$ the heat kernel of the shot-down process, because
\begin{equation} \label{eq:int1}
\E_x \int_0^{\sigma_D} f(t,X_t)\,dt=\int_0^\infty \int_{\Rd} \phD(t,x,y)f(t,y)\,dy\,dt.
\end{equation}

\begin{theorem}\label{I-W}
The joint distribution of $(\sigma_D,X_{\sigma_{D}^{-}},X_{\sigma_D})$ restricted to $\{X_{\sigma_{D}^{-}}\in D\}$ and calculated under $\PP_x$ for $x \in D$, satisfies for $A \subset D, B \subset (\overline{D})^c, I \subset [0,\infty)$,
\begin{equation} \label{gwiazdka}
\mathbb{P}_x[\sigma_D \in I, X_{\sigma_{D}^{-}} \in A, X_{\sigma_D} \in B] = \int \limits_I \int \limits_A\int \limits_B\,\nu(w-y) \phD(t,x,y)\,dw\,dy\,dt.   
\end{equation}

\end{theorem}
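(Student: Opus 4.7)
The plan is to derive \eqref{gwiazdka} by combining the L\'evy system (compensation) formula for $X$ with identity \eqref{eq:int1}. Recall the L\'evy system in the form: for any stopping time $T$ and any nonnegative Borel $f\colon[0,\infty)\times\Rd\times\Rd\to[0,\infty)$ vanishing on the diagonal $\{y=w\}$,
$$\mathbb{E}_x\sum_{0<s\le T}f(s,X_{s^-},X_s)\mathbbm{1}_{X_{s^-}\ne X_s}=\mathbb{E}_x\int_0^T\int_{\Rd} f(s,X_s,w)\,\nu(w-X_s)\,dw\,ds.$$
This is the standard L\'evy system formula for the isotropic stable process, where passing from time-independent to time-dependent $f$ is done by a monotone-class argument.

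The key geometric observation is that, because $A\subset D$ and $B\subset(\overline D)^c$, every jump time $s$ with $X_{s^-}\in A$ and $X_s\in B$ satisfies $[X_{s^-},X_s]\cap D^c\ne\emptyset$ (its right endpoint already lies in $D^c$). By the very definition of $\sigma_D$, no such $s$ can be strictly smaller than $\sigma_D$, so the only $s\le\sigma_D$ at which both conditions can simultaneously hold is $s=\sigma_D$ itself, and this occurs precisely on the event on the left-hand side of \eqref{gwiazdka}. Hence, on the trajectory level,
$$\sum_{0<s\le\sigma_D}\mathbbm{1}_{s\in I}\mathbbm{1}_{X_{s^-}\in A}\mathbbm{1}_{X_s\in B}=\mathbbm{1}_{\{\sigma_D\in I,\,X_{\sigma_D^-}\in A,\,X_{\sigma_D}\in B\}},$$
and taking $\mathbb{P}_x$-expectations reproduces the left-hand side of \eqref{gwiazdka}. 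Note that $A$ and $B$ are disjoint, so the indicator $\mathbbm{1}_{X_{s^-}\ne X_s}$ is automatic.

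Applying the L\'evy system with $T=\sigma_D$ and $f(s,y,w)=\mathbbm{1}_{s\in I}\mathbbm{1}_{y\in A}\mathbbm{1}_{w\in B}$ converts the sum above into
$$\mathbb{E}_x\int_0^{\sigma_D}\mathbbm{1}_{s\in I}\mathbbm{1}_{X_s\in A}\left(\int_B\nu(w-X_s)\,dw\right)ds.$$
Identity \eqref{eq:int1}, applied to the integrand $g(t,y)=\mathbbm{1}_{t\in I}\mathbbm{1}_{y\in A}\int_B\nu(w-y)\,dw$, rewrites this as
$$\int_I\int_A\left(\int_B\nu(w-y)\,dw\right)\phD(t,x,y)\,dy\,dt,$$
which is the right-hand side of \eqref{gwiazdka} after interchanging the order of integration.

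The main obstacle is to secure the L\'evy system in the time-dependent form up to the stopping time $\sigma_D$; the indicator $s\mapsto\mathbbm{1}_{s\le\sigma_D}$ is left-continuous, hence predictable, so this is standard, but it is the point that requires care. Once this is in place, the argument is essentially the geometric remark that any jump from a point of $D$ into $(\overline D)^c$ is automatically a shooting-down event, so the L\'evy system, applied up to $\sigma_D$, sees exactly the triple $(\sigma_D,X_{\sigma_D^-},X_{\sigma_D})$ we are trying to describe.
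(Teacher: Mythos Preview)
Your proof is correct and follows essentially the same route as the paper: both identify the left-hand side with a compensated jump sum via the L\'evy system applied up to $\sigma_D$, then rewrite the resulting occupation integral using \eqref{eq:int1}. The only difference is presentational---the paper constructs the compensated martingale $M(t)$ explicitly, verifies uniform integrability (using that $I$ is bounded and ${\rm dist}(A,B)>0$), and invokes optional stopping at $\sigma_D$, whereas you package this as the known L\'evy system identity at a stopping time via predictability of $\mathbbm{1}_{[0,\sigma_D]}$; the geometric remark that a jump from $A\subset D$ into $B\subset(\overline D)^c$ can occur only at $s=\sigma_D$ is identical in both arguments.
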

\begin{proof}
In view of the monotone convergence theorem, we may assume that $I$ and $B$ are compact.
Let $F(u,y,w)=\1_{I}(u)\1_A(y) \1_B(w)$ and
\begin{align*}
M(t)= \sum\limits_{\substack{0<u \le t \\ |\dX_u| \neq 0}}F(u, X_{u^-}, X_{u}) - \int_{0}^{t} \int_{\Rd} F(u,X_{u},
X_{u}+z)\nu(z)\,dz\,du, \textrm{     } 0 \leq t < \infty.
\end{align*}
By the L\'evy system, {see, e.g., } \cite[Lemma~4.1]{MR3737628},
\begin{align*}
   \E_x  \sum\limits_{\substack{0<u \le \infty \\ |\dX_u| \neq 0}}F(u, X_{u^-}, X_{u}) &= \E_x \int_{0}^{\infty} \int_{\Rd} F(u,X_{u},X_{u}+z)\nu(z)\,dz\,du.
\end{align*}
Thus, $\E_x M(t)=0$. In fact, $M(t)$ is a martingale. Indeed, let $0\le s\le t$. By considering the L\'evy process $u\mapsto X_{s+u}-X_s$, independent of $ \{X_r$, $0  \le r \le s \}$, we calculate the conditional expectation:
$$\E_x \bigg [\sum\limits_{\substack{s<u \le t \\ |\dX_u| \neq 0}}F(u, X_{u^-}, X_{u}) - \int_{s}^{t} \int_{\Rd} F(u,X_{u},
X_{u}+z)\nu(z)\,dz\,du\ \bigg|\  X_r, 0\le r\le s \bigg ]=0.$$
Since 
\begin{align*}
   |M(t)|\leq \sum\limits_{\substack{0< u < \infty \\ |\dX_u| \neq 0}}F(u, X_{u^-}, X_{u}) + \int_{0}^{\infty}  \int_{\Rd} F(u,X_{u},X_{u}+z)\nu(z)\,dz\,du,
\end{align*} 
and the right hand side has expectation not bigger than 
$2|I| \nu ( \{ |z| \geq {\rm dist}(A,B) \}) < \infty $, we see that $M$ is a uniformly integrable $c \grave {a}dl \grave{a}g$ martingale. By stopping the martingale at $\sigma_D$,   
we obtain ~\cite[Section 12.5]{MR2059709}
$$ \E \sum\limits_{\substack{0 <u < \sigma_D \\ |\dX_u| \neq 0}}F(u, X_{u^-}, X_{u}) = \E \int_{0}^{\sigma_D}  \int_{\Rd} F(u,X_{u},X_{u}+z)\nu(z)\,dz\,du.$$
By \eqref{eq:int1}, this amounts to 
\begin{align}\label{eq:IW}
\PP^x[\sigma_D\in I,\; X_{\sigma_D^{-}}\in A,\; X_{\sigma_D}\in B]=
\int_I \int_{B-y} \int_A \phD(u,x,y)\nu(z)\,dy\,dz\,du.\qquad \qedhere
\end{align}
\end{proof}

\begin{lemma}\label{l.nhb}
If open set $D\subset \Rd$ is Lipschitz, then $\PP^x[X_{\sigma_D^{-}}\in D]=1$, $x\in D$.
\end{lemma}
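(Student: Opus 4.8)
The statement asserts that, for a Lipschitz open set $D$, the shot-down stable process does not "jump out along the boundary" — with probability one, the left limit $X_{\sigma_D^{-}}$ still lies in $D$ (rather than on $\partial D$). The plan is to rule out the complementary event $\{X_{\sigma_D^{-}}\in \partial D\}$. Since $X$ is $\alpha$-stable with no continuous component, the path can reach the shot-down time $\sigma_D$ in only two ways: either by a jump at time $\sigma_D$ whose chord $[X_{\sigma_D^{-}},X_{\sigma_D}]$ meets $D^c$, in which case $X_{\sigma_D^{-}}$ is the pre-jump position, or by $X_{\sigma_D^{-}}$ already lying in $D^c$ "in the limit" without a jump at $\sigma_D$ — i.e.\ the path creeps continuously (via accumulation of small jumps) onto $\partial D$. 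We must show this second scenario has probability zero, and that in the first scenario the pre-jump position cannot be exactly on $\partial D$ either.

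First I would reduce to a purely path-analytic question about the free process $X$: since $\hat X^D$ agrees with $X$ up to $\sigma_D$, it suffices to show $\PP_x$-a.s.\ that $X_{\sigma_D^{-}}\notin\partial D$. Write $\sigma_D$ as the infimum of times $t$ with $[X_{t^-},X_t]\cap D^c\neq\emptyset$. The key observation is that a pure-jump process like $X$ a.s.\ has no jump landing exactly in the Lebesgue-null set $\partial D$ (this follows from the Ikeda--Watanabe/Lévy-system computation: the expected number of jumps into a null set is $\int\nu(z-y)\,\mathbf 1_{\partial D}(z)\,dz=0$), and, more importantly, that $X_{t^-}$ for a fixed $t$ a.s.\ avoids $\partial D$ — but here $\sigma_D$ is random, so I need to handle the possibility that the path approaches $\partial D$ continuously along a sequence of small jumps and that $\sigma_D$ is the first such contact time with no jump at $\sigma_D$.

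The main step is therefore: on the event $\{$no jump at $\sigma_D$ hitting $D^c$, yet $\sigma_D<\infty\}$, the path satisfies $X_t\in D$ for all $t<\sigma_D$ and $X_{\sigma_D^-}\in\partial D$, i.e.\ the process exits $D$ "continuously" through $\partial D$. For Lipschitz $D$ this is where the geometry enters: using the exterior-cone property, one shows that from any point near $\partial D$ there is a uniformly positive probability of making a jump that overshoots the nearby boundary (landing in $(\overline D)^c$ across a chord meeting $D^c$) before the path can creep to $\partial D$; equivalently, the stable process cannot approach $\partial D$ continuously without first being shot down, because at scale $\delta=\text{dist}(X_t,\partial D)$ the probability of a shooting-down jump in the next $\delta^\alpha$ units of time is bounded below uniformly in $\delta$. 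Summing a Borel--Cantelli / optional-stopping argument over dyadic scales $\delta\to 0$ forces the creeping event to have probability zero. I expect this uniform lower bound on the shooting-down probability near a Lipschitz boundary — essentially that $\iota_D(x)\gtrsim \text{dist}(x,\partial D)^{-\alpha}$ up to the relevant constants, which the exterior cone guarantees — to be the technical heart of the argument; the rest is a standard zero-one / strong-Markov iteration. Finally, the case where there \emph{is} a jump at $\sigma_D$ is immediate: then $X_{\sigma_D^-}=\lim_{s\uparrow\sigma_D}X_s$ and, by definition of $\sigma_D$ as the first such time, $[X_s,X_r]\subset D$ for all $s<r<\sigma_D$, so $X_{\sigma_D^-}\in\overline{D}$; combined with the fact that the pre-jump position of a pure-jump process a.s.\ avoids the null set $\partial D$, we get $X_{\sigma_D^-}\in D$, completing the proof.
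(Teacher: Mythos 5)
Your proposal is correct in outline, but it takes a genuinely different (and longer) route than the paper for the hard step. The paper's proof is a three-line reduction: on the event $\{X_{\sigma_D^-}\in\partial D\}$ one uses quasi-left continuity (approximating by the hitting times of the sets $\{\mathrm{dist}(\cdot,D^c)<1/n\}$) to conclude $\tau_D\le\sigma_D$, hence $\sigma_D=\tau_D$ and $X_{\tau_D^-}\in\partial D$; this last event has probability zero by \cite[Lemma 17]{MR1438304}, the known fact that the isotropic stable process a.s.\ does not exit a Lipschitz set continuously. This single argument covers both of your scenarios (jump or no jump at $\sigma_D$) at once. You instead split the two cases by hand: the jump case via the L\'evy system (a.s.\ no jump originates from the Lebesgue-null set $\partial D$ --- correct, by summing over jump sizes $>1/n$), and the continuous-approach case by re-deriving the exterior-cone/strong-Markov iteration over dyadic scales --- which is essentially a re-proof of the cited lemma rather than an appeal to it. Both arguments rest on the same geometric input (the exterior cone property of Lipschitz sets giving $\kappa_D(x)\gtrsim\delta_D(x)^{-\alpha}$, hence a scale-invariant positive probability of a shooting-down jump before the process can creep to $\partial D$); your version is self-contained but leaves the key iteration as a sketch, and the careful formulation should compare, at each scale, the probability of being shot down before exiting a ball of half that radius, not merely within time $\delta^\alpha$. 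One small slip: ``$[X_s,X_r]\subset D$ for all $s<r<\sigma_D$'' is not what the definition of $\sigma_D$ gives; you mean $[X_{r^-},X_r]\subset D$ for all $r<\sigma_D$, which suffices to get $X_r\in D$ for $r<\sigma_D$ and hence $X_{\sigma_D^-}\in\overline{D}$.
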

\begin{proof}
It is well known that $X$ is quasi-left continuous \cite{MR3185174}.
Let $X_0=x\in D$.
If $X_{\sigma_D^{-}}\in \partial D$, then, by the quasi-left continuity and the proof of \cite [Lemma 17]{MR1438304}, $\sigma_D\le \tau_D$, hence $\sigma_D=\tau_D$ and so $X_{\tau_D^{-}}\in \partial D$, which has probability $0$, by \cite [Lemma 17]{MR1438304}.
\end{proof}
\begin{remark}\label{r.nhb}
{\rm For Lipschitz $D$, Lemma~\ref{l.nhb} allows to apply \eqref{gwiazdka} to arbitrary (Borel) $B\subset D^c$. It would even suffice
to assume that $D$ has the outer cone property or
less (see \cite[Appendix A.1]{MR4088505}), but such geometric restrictions 
would 
linger in our applications of the Ikeda-Watanabe formula. Of course, $C^{1,1}$ open sets are Lipschitz.
For clarity, we also note that in \eqref{gwiazdka}, we have $X_{\sigma_D}\in D^c$, so  $\sigma_D=\tau_D$. For non-convex sets $D$, this is only a part of the possible scenarios for $X_{\sigma_D}$, see, e.g., Theorem~\ref{thm:inc}.}
\end{remark}
\section{The heat kernel  of the shot-down stable process}\label{sec:Hunt} 

Recall that $X$ is the isotropic $\alpha$-stable L\'evy process in $\Rd$, with $\alpha\in (0,2)$ and $d=1,2,\ldots$.  Let $D$ be an arbitrary open set in $\Rd$.\footnote{Starting from Corollary~\ref{fact:huntIke} we  make additional assumptions on $D$.}

G. Hunt's formula defines the transition density $\pD(t,x,y)$ \textit{pointwise}:
\begin{equation}\label{eq:Hfk} 
\pD(t, x, y) = p(t,x,y) - \E_x[\tau_D < t; p(t-\tau_D, X_{\tau_D}, y)], \quad t>0,\, x,y \in \Rd.
\end{equation}
In agreement with Subsection~\ref{IWF}, the following equality holds, 
\begin{equation}\label{eq:stdkp}
\PP_x(t<\tau_D; X_t \in A) = \int \limits_A \pD(t,x,y)\,dy, \quad t>0,\, x\in \Rd, A\subset \Rd,
\end{equation}
see \cite[Theorem 2.4]{MR1329992}, and for nonnegative or absolutely integrable functions $f$, 
$$P^D_t f(x) = \int \limits_{\Rd} \pD(t,x,y)f(y)\,dy , \quad t>0,\, x \in \Rd.$$
We will extend this to the shot-down process.
\subsection{Hunt-type formula for the shot-down process}
In Section~\ref{IWF} we defined the heat kernel of the shot-down process $\phD(t,x,y)$ for $a.e.$ $y$, by using  the Radon-Nikodym theorem. We now define $\phD(t,x,y)$ {\it pointwise} for all $x,y$ in $D$. 
\begin{definition} \label{def_hunt}
	\begin{align*}\label{eq:Hfsd} 
	\phD(t, x, y) = p(t,x,y) - \E_x[\sigma_D < t; p(t-\sigma_D, X_{\sigma_D}, y)], \quad t>0,\, x,y \in \Rd.
	\end{align*}
\end{definition}
\noindent
The following theorem is an analog of the first part of ~\cite[Theorem 2.4]{MR1329992}.
It can be proven in much the same way 
as in \cite{MR1329992}, 
but we give the proof for convenience and completeness. 
\begin{theorem}
	$\PP_x(t<\sigma_D; X_t \in A) = \int \limits_A \phD(t,x,y)\,dy$ for $t>0$, $x\in \Rd$, $A\subset \Rd$.
\end{theorem}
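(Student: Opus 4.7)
The plan is to mimic the proof of the standard Hunt formula for the killed process, with the stopping time $\tau_D$ replaced by $\sigma_D$. The key ingredients are the strong Markov property of the underlying stable Lévy process $X$ at the stopping time $\sigma_D$ (which is a stopping time by Theorem~\ref{thm:stopping-time}) and Fubini's theorem to move an integral over $A$ outside an expectation.

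First I would decompose the unconditional transition probability at time $t$ by splitting on the event $\{t<\sigma_D\}$ versus $\{\sigma_D\le t\}$:
\begin{equation*}
\int_A p(t,x,y)\,dy=\PP_x(X_t\in A)=\PP_x(t<\sigma_D,\;X_t\in A)+\PP_x(\sigma_D\le t,\;X_t\in A).
\end{equation*}
For the second term I would apply the strong Markov property of $X$ at $\sigma_D$. Since $X_{\sigma_D}\in\Rd$ is well defined as a random point (the ``shooting down'' only terminates the process $\hat X^D$, not the underlying $X$), and since $\{\sigma_D\le t\}\in\mathcal F_{\sigma_D}$,
\begin{equation*}
\PP_x(\sigma_D\le t,\;X_t\in A)=\E_x\bigl[\sigma_D\le t;\;\PP_{X_{\sigma_D}}(X_{t-\sigma_D}\in A)\bigr]=\E_x\Bigl[\sigma_D\le t;\int_A p(t-\sigma_D,X_{\sigma_D},y)\,dy\Bigr].
\end{equation*}
Next I would use Fubini (the integrand is nonnegative) to exchange the expectation and the integral over $A$, obtaining
\begin{equation*}
\PP_x(\sigma_D\le t,\;X_t\in A)=\int_A \E_x\bigl[\sigma_D\le t;\;p(t-\sigma_D,X_{\sigma_D},y)\bigr]\,dy.
\end{equation*}
Substituting back and recalling Definition~\ref{def_hunt} yields the claim, once I observe that $\PP_x(\sigma_D=t)=0$ so that $\{\sigma_D\le t\}$ and $\{\sigma_D<t\}$ may be used interchangeably (the law of $\sigma_D$ on $(0,\infty)$ has no atoms, because $X$ has no fixed times of discontinuity and the unique conceivable atom at $t=\tau_D$ on $\{X_{\tau_D^-}\in\partial D\}$ has probability zero for Lipschitz $D$ by Lemma~\ref{l.nhb}; for a general open $D$ one can simply carry through the argument with $\sigma_D<t$ everywhere).

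The main obstacle is really the strong Markov step: one must be careful that $X$ continues after $\sigma_D$ as a Lévy process started at $X_{\sigma_D}$, independent of $\mathcal F_{\sigma_D}$. This is the standard strong Markov property of the stable process, together with the fact that $\sigma_D$ is a stopping time for the natural filtration; both are available to us. The rest is bookkeeping with Fubini and the definition of $\phD$.
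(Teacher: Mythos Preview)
Your argument is correct and follows the same Hunt-formula template as the paper: decompose $\PP_x(X_t\in A)$ according to whether $\sigma_D$ has occurred, express the post-$\sigma_D$ piece via the Markov property, and pull the $dy$-integral outside by Fubini. The only methodological difference is that the paper does not invoke the strong Markov property at $\sigma_D$ directly; instead it approximates $\sigma_D$ from above by dyadic stopping times $S_n$, applies the (ordinary) Markov property at those discrete times, and then passes to the limit $n\to\infty$ and $u\uparrow t$ using right-continuity of $X$ and continuity of $p$. Your route is shorter because the strong Markov property for the stable L\'evy process is available; the paper's route is the classical Chung--Zhao discretisation and is slightly more self-contained.

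One small point: your handling of $\PP_x(\sigma_D=t)=0$ is shakier than the paper's. Lemma~\ref{l.nhb} concerns $X_{\sigma_D^-}$, not atoms of $\sigma_D$, and your fallback ``carry through with $\sigma_D<t$ everywhere'' does not close the gap, since the decomposition of $\PP_x(X_t\in A)$ still leaves a term $\PP_x(\sigma_D=t,\,X_t\in A)$ that must be shown to vanish. The paper's argument is the clean one: for fixed $t$ the process is a.s.\ continuous at $t$, so $[X_{t-},X_t]=\{X_t\}$; combined with $X_s\in D$ for $s<t$ on $\{\sigma_D=t\}$ and right-continuity, one forces $X_t\in\partial D$, which has probability zero.
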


\begin{proof}
	We note that for fixed $t>0$, $\PP_x(\sigma_D=t)=0$. Indeed, $\PP_x(X_t \in \partial D)=0$ and by \cite[(1.10)]{MR3185174} $\PP_x-a.s.$ $X$ is continuous at $t$, which implies that $\PP_x(\sigma_{D}\neq t)=1$.
	Therefore,
	\begin{equation}\label{eq:pcont}
	\begin{split}
	\PP_x(t<\sigma_D; X_t \in A) &= \PP_x(X_t \in A) - \PP_x(\sigma_D \leq t; X_t \in A) \\
	&= \int \limits_A p(t,x,y)\,dy - \PP_x(\sigma_D < t; X_t \in A).
	\end{split}
	\end{equation}
	Let $0<u<t$, $n \geq 1$ and $1\leq k\leq2^n$. We define stopping times
	$$S_n = \begin{cases} ku2^{-n}, & \mbox{if } (k-1)u2^{-n} \leq \sigma_D < ku2^{-n}, \\ \infty, & \mbox{if } \sigma_D \geq u. \end{cases} $$
	By the Markov property \cite{MR648601}, 
	\begin{equation}\label{eq:Pxest}
	\begin{split}
	\PP_x(\sigma_D < u; X_t \in A) &= \sum\limits_{k=1}^{2^n} \PP_x \bigg(\frac{(k-1)u}{2^n} \leq \sigma_D < \frac{ku}{2^n}; X_t \in A \bigg) \\
	&= \sum\limits_{k=1}^{2^n} \E_x \bigg[\frac{(k-1)u}{2^n} \leq \sigma_D < \frac{ku}{2^n}, \PP_{X_{ku2^{-n}}}(X_{t-ku2^{-n}} \in A) \bigg] \\
	&= \E_x \bigg[ \sigma_D<u; \int \limits_A p(t-S_n, X_{S_n}, y)\,dy  \bigg].
	\end{split}
	\end{equation}
	If $\sigma_D < u$, then $t-S_n \geq t-u > 0$  and we get $S_n \downarrow \sigma_D$. We let $n \to \infty$. Then we let $u \uparrow t$ in \eqref{eq:Pxest}. By the right continuity of the process,  joint continuity of $p(t,x,y)$, dominated convergence, Fubini-Tonelli and monotone convergence,
	\begin{align*}
	\PP_x(\sigma_D<t; X_t \in A) &= \lim_{u\uparrow t} \PP_x(\sigma_{D}<u; X_t \in A)\\
	&= \lim_{u\uparrow t} \E_x \left[\sigma_{D}<u;\int_A p(t-\sigma_D,X_{\sigma_D},y)\,dy\right]\\
	&=\E_x \left[\sigma_{D}<t;\int_A p(t-\sigma_D,X_{\sigma_D},y)\,dy\right]\\
	&=\int \limits_A \E_x \bigg[ \sigma_D <t; p(t-\sigma_D, X_{\sigma_D}, y) \bigg]\,dy.
	\end{align*}
	By \eqref{eq:pcont},
	\begin{align*}
	\PP_x(t<\sigma_D; X_t \in A) &= \int \limits_A p(t,x,y)\,dy - \int \limits_A \E_x \bigg[ \sigma_D <t; p(t-\sigma_D, X_{\sigma_D}, y) \bigg]\,dy \\
	&=  \int \limits_A \phD(t,x,y)\,dy.\qedhere
	\end{align*}
\end{proof}

The heat kernel of the killed process enjoys the scaling: $r^{\alpha}p_{rD}(r^{\alpha}t,rx,ry)=p_D(t,x,y)$. Here $r>0$, $x\in \Rd$, $rD=\{rx: x\in D\}$. We will prove a similar equality for $\phD$.
\begin{lemmaf}\label{fact_scal}
The law of $\sigma_{rD}$ under $\PP_{rx}$ equals that of $r^{\alpha}\sigma_{D}$ under $\PP_x$.
\end{lemmaf}

\begin{proof}
	
	We have:
	\begin{eqnarray*}
		\sigma_{rD} &=& \inf\left\{t> 0:\left[X_{t^{-}},X_t\right]\cap(rD)^{c}\neq\emptyset\right\},\\
		&=&\inf\left\{t>0: \left[X_{t^{-}}/r, X_t/r\right] \cap D^{c} \neq \emptyset\right\}.
	\end{eqnarray*}
	The distribution of $X_t/r$ when $X$ starts at $rx$ is the same as the distribution of $X\left(t/r^{\alpha}\right)$ when $X$ starts at $x$.  In the same sense
	$X_{t^{-}}/r$ and $X\left(t/r^{\alpha}-\right)$ equal in distribution,
	and the last infimum above equals in distribution to
	\begin{displaymath}
	\inf\left\{t>0:\left[X\left(t/r^{\alpha}-\right), X\left(t/r^{\alpha}\right)\right] \cap D^{c} \neq \emptyset \right\}=r^{\alpha}\sigma_{D}. \qedhere
\end{displaymath}
\end{proof}

\begin{lemma}\label{lem:schk}
	$r^{d}\hat{p}_{rD}(r^{\alpha}t,rx,ry) = \phD(t,x,y)$ for $r>0$, $x \in \Rd$.
\end{lemma}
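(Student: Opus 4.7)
The plan is to reduce everything to Definition~\ref{def_hunt} and then exploit the scaling of the free heat kernel $p$ together with a pathwise coupling that upgrades Lemma~\ref{fact_scal} from the marginal of $\sigma_D$ to the joint law of $(\sigma_D, X_{\sigma_D})$.

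First, I apply the Hunt-type formula to $\hat{p}_{rD}$ at the point $(r^{\alpha}t, rx, ry)$ and multiply by $r^{d}$:
\begin{equation*}
r^{d}\hat{p}_{rD}(r^{\alpha}t, rx, ry) = r^{d}p(r^{\alpha}t, rx, ry) - r^{d}\,\E_{rx}\!\left[\sigma_{rD}<r^{\alpha}t;\, p(r^{\alpha}t-\sigma_{rD}, X_{\sigma_{rD}}, ry)\right].
\end{equation*}
For the first summand, the scaling $p_{r^{\alpha}t}(rx) = r^{-d}p_{t}(x)$ recalled in Section~\ref{sec:Preli} instantly yields $r^{d}p(r^{\alpha}t, rx, ry) = p(t, x, y)$, which matches the first summand of $\phD(t,x,y)$.

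The nontrivial step is rewriting the second summand. I would introduce the scaled process $Y_{t} := X_{r^{\alpha}t}/r$. Under $\PP_{rx}$, the process $Y$ is an isotropic $\alpha$-stable L\'evy process started at $x$, i.e., has law $\PP_{x}$; this follows from the scaling property of $X$ exactly as in the proof of Lemma~\ref{fact_scal}, but now used pathwise. As in that proof one checks
\begin{equation*}
\sigma_{rD}(X) = r^{\alpha}\sigma_{D}(Y), \qquad X_{\sigma_{rD}} = r\,Y_{\sigma_{D}(Y)},
\end{equation*}
the first because $[X(s-)/r, X(s)/r] = [Y(s/r^{\alpha}-), Y(s/r^{\alpha})]$ meets $D^{c}$ iff $[X(s-), X(s)]$ meets $(rD)^{c}$, and the second by evaluating $X$ at $r^{\alpha}\sigma_{D}(Y)$. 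Substituting into the expectation and using again $p(r^{\alpha}s, ru, rv) = r^{-d}p(s, u, v)$,
\begin{align*}
r^{d}\,\E_{rx}\!\left[\sigma_{rD}<r^{\alpha}t;\, p(r^{\alpha}t-\sigma_{rD}, X_{\sigma_{rD}}, ry)\right]
&= r^{d}\,\E_{x}\!\left[\sigma_{D}<t;\, p\!\left(r^{\alpha}(t-\sigma_{D}), rY_{\sigma_{D}}, ry\right)\right] \\
&= \E_{x}\!\left[\sigma_{D}<t;\, p(t-\sigma_{D}, Y_{\sigma_{D}}, y)\right],
\end{align*}
which, since $Y$ under $\PP_{x}$ has the same law as $X$ under $\PP_{x}$, equals the second summand of $\phD(t,x,y)$.

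Combining the two identities gives $r^{d}\hat{p}_{rD}(r^{\alpha}t, rx, ry) = \phD(t, x, y)$. The only point that requires a little care, and which I view as the main obstacle, is the upgrade from the distributional identity of $\sigma_{rD}$ alone in Lemma~\ref{fact_scal} to the joint identity involving $X_{\sigma_{rD}}$; the coupling $Y_{t} = X_{r^{\alpha}t}/r$ handles this cleanly and avoids having to invoke Ikeda--Watanabe or the explicit form of $\hat{p}$.
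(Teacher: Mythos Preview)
Your proof is correct and follows essentially the same route as the paper's: apply the Hunt-type formula to $\hat{p}_{rD}$, use the scaling of the free kernel on the first term, and then use the pathwise scaling $Y_t=X_{r^\alpha t}/r$ to rewrite the expectation. The paper's argument is the same but terser; it cites only Lemma~\ref{fact_scal} while tacitly using the full pathwise identity $(\sigma_{rD},X_{\sigma_{rD}})\overset{d}{=}(r^\alpha\sigma_D,\,rX_{\sigma_D})$, which is precisely the point you flag and make explicit via the coupling.
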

\begin{proof}
	By the Hunt formula in Definition \ref{def_hunt} and Lemma~\ref{fact_scal}, 
	\begin{align*}
	r^{d}\pha_{rD}(r^{\alpha}t,rx,ry)&=r^{d}{p}(r^{\alpha}t,rx,ry)-r^{d}\mathbb{E}_{rx}[\sigma_{rD}<r^{\alpha}t,p(r^{\alpha}t-\sigma_{rD},X_{\sigma_{rD}},ry)]\\
	&={p}(t,x,y)-r^{d}\mathbb{E}_{x}[r^{\alpha}\sigma_{D}<r^{\alpha}t;p(r^{\alpha}(t-\sigma_{D}),X_{r^{\alpha}\sigma_{D}},ry)]\\
	&={p}(t,x,y)-\mathbb{E}_{x}[\sigma_{D}<t;r^{d}p(r^{\alpha}(t-\sigma_{D}),rX_{\sigma_{D}},ry)]\\
	&={p}(t,x,y)-\mathbb{E}_{x}[\sigma_{D}<t;p(t-\sigma_{D},X_{\sigma_{D}},y)]=\phD(t,x,y).
	\qquad \qedhere
	\end{align*}
\end{proof}

\subsection{Perturbation formula}\label{sec:mHf}
We note the following extension of Hunt's formula.
\begin{lemma}\label{lem:Hffssd}
	Let $U$ be open, $U \subset D$. Then, for $t>0$ and $x,y\in \Rd$ we have
	$$
	\pha_U(t,x,y) = \phD(t,x,y) - \E_x[\sigma_U<t; [X_{\sigma_U-},X_{\sigma_U}]\subset D; \phD(t-\sigma_U,X_{\sigma_U},y)].
	$$
\end{lemma}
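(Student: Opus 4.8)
The statement is a "restart at an intermediate stopping time" version of Hunt's formula, so I would prove it by first-entry decomposition at $\sigma_U$, using the strong Markov property of $X$ together with the pointwise Hunt formula from Definition~\ref{def_hunt}. The key observation is that the event $\{\sigma_D < t\}$ splits according to whether the line segment causing the shoot-down (relative to $D$) occurs before or after $\sigma_U$. More precisely, since $U\subset D$, we have $\sigma_U\le\sigma_D$; and on the event $\{[X_{\sigma_U-},X_{\sigma_U}]\subset D\}$ (i.e.\ the jump that triggers $\sigma_U$ does not yet trigger the $D$-shoot-down) we have $\sigma_U<\sigma_D$, whereas on the complementary event $\{[X_{\sigma_U-},X_{\sigma_U}]\cap D^c\ne\emptyset\}$ we get $\sigma_U=\sigma_D$.

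\textbf{Step 1.} Start from the pointwise Hunt formula for $U$,
$$\pha_U(t,x,y)=p(t,x,y)-\E_x[\sigma_U<t;p(t-\sigma_U,X_{\sigma_U},y)],$$
and rewrite $p(t,x,y)=p(t,x,y)-\E_x[\sigma_D<t;p(t-\sigma_D,X_{\sigma_D},y)]+\E_x[\sigma_D<t;p(t-\sigma_D,X_{\sigma_D},y)]=\phD(t,x,y)+\E_x[\sigma_D<t;p(t-\sigma_D,X_{\sigma_D},y)]$. Thus it suffices to show
$$\E_x[\sigma_D<t;p(t-\sigma_D,X_{\sigma_D},y)]-\E_x[\sigma_U<t;p(t-\sigma_U,X_{\sigma_U},y)]=-\E_x[\sigma_U<t;[X_{\sigma_U-},X_{\sigma_U}]\subset D;\phD(t-\sigma_U,X_{\sigma_U},y)].$$

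\textbf{Step 2.} Split $\{\sigma_U<t\}$ into $E_1=\{\sigma_U<t;[X_{\sigma_U-},X_{\sigma_U}]\cap D^c\ne\emptyset\}$ and $E_2=\{\sigma_U<t;[X_{\sigma_U-},X_{\sigma_U}]\subset D\}$. On $E_1$, as noted, $\sigma_D=\sigma_U$, so the contributions of the two expectations on the left over $E_1$ cancel exactly. On $E_2$ we have $\sigma_U<\sigma_D$, and by the strong Markov property at $\sigma_U$ the post-$\sigma_U$ process is again an $\alpha$-stable process started from $X_{\sigma_U}$; moreover $\{\sigma_D<t\}\cap E_2=\{\sigma_U<t\}\cap E_2\cap\{\sigma_D\circ\theta_{\sigma_U}<t-\sigma_U\}$ because no segment before or at time $\sigma_U$ violates $D$ on $E_2$, so $\sigma_D=\sigma_U+\sigma_D\circ\theta_{\sigma_U}$ there. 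Hence, conditioning on $\mathcal F_{\sigma_U}$ and using Definition~\ref{def_hunt} applied to the shifted process,
$$\E_x[E_2;p(t-\sigma_D,X_{\sigma_D},y)]=\E_x\big[E_2;\E_{X_{\sigma_U}}[\sigma_D<t-\sigma_U;p((t-\sigma_U)-\sigma_D,X_{\sigma_D},y)]\big]=\E_x[E_2;p(t-\sigma_U,X_{\sigma_U},y)-\phD(t-\sigma_U,X_{\sigma_U},y)].$$
Combining, the left side of Step~1's reduced identity equals $\E_x[E_2;p(t-\sigma_U,X_{\sigma_U},y)-\phD(t-\sigma_U,X_{\sigma_U},y)]-\E_x[E_2;p(t-\sigma_U,X_{\sigma_U},y)]=-\E_x[E_2;\phD(t-\sigma_U,X_{\sigma_U},y)]$, which is exactly the claimed right-hand side.

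\textbf{Main obstacle.} The delicate point is the identity $\sigma_D=\sigma_U+\sigma_D\circ\theta_{\sigma_U}$ on $E_2$, i.e.\ verifying carefully that on $E_2$ no $D$-violating segment occurred on $(0,\sigma_U]$. For $s<\sigma_U$ we have $[X_{s-},X_s]\subset U\subset D$ by definition of $\sigma_U$; at $s=\sigma_U$ the segment lies in $D$ precisely by the defining event $E_2$. One must also handle measurability of $E_2$ and the use of the strong Markov property at $\sigma_U$ (a stopping time by Theorem~\ref{thm:stopping-time}), and justify pulling $\E_{X_{\sigma_U}}[\cdots]$ inside via the pointwise (rather than a.e.) version of Hunt's formula, which is exactly why Definition~\ref{def_hunt} was set up pointwise. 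The boundary case $X_{\sigma_U-}\in\partial U$ is negligible by Lemma~\ref{l.nhb} when $U$ is regular enough, but since the statement is for general open $U\subset D$ one should phrase the segment-inclusion events so that this case is simply absorbed into $E_2$ or $E_1$ without needing regularity.
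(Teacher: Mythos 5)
Your argument is correct and follows essentially the same route as the paper's proof: subtract the two Hunt formulas, cancel the contributions on $\{\sigma_U=\sigma_D\}$ (your $E_1$), identify the remainder with the event $[X_{\sigma_U-},X_{\sigma_U}]\subset D$, and apply the strong Markov property at $\sigma_U$ together with $\sigma_D=\sigma_U+\sigma_D\circ\theta_{\sigma_U}$ to convert $p$ into $\phD$. No substantive differences.
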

\begin{proof}
	We have
	\begin{align*}
	p(t,x,y)-\phD(t,x,y) = \E_x[\sigma_D<t;p(t-\sigma_D,X_{\sigma_D},y)]
	\end{align*}
	and
	\begin{align*}
	p(t,x,y)-\hat{p}_U(t,x,y) = \E_x[\sigma_U<t;p(t-\sigma_U,X_{\sigma_U},y)].
	\end{align*}
	Subtracting we get
	\begin{align*}
	\phD(t,x,y)-\hat{p}_U(t,x,y) 
	&=\E_x\left[\sigma_U<t;p(t-\sigma_U,X_{\sigma_U},y)\right] - \E_x\left[\sigma_D<t;p(t-\sigma_D,X_{\sigma_D},y)\right]\\
	&=\E_x\left[\sigma_U<t,\sigma_U<\sigma_D;p(t-\sigma_U,X_{\sigma_U},y)\right]\\
	&\quad -\E_x\left[\sigma_D<t,\sigma_U<\sigma_D;p(t-\sigma_D,X_{\sigma_D},y)\right]\\
	&=\E_x\left[\sigma_U<t,[X_{\sigma_U-},X_{\sigma_U}]\subset D; p(t-\sigma_U,X_{\sigma_U},y)\right]\\
	&\quad -\E_x\left[\sigma_D<t,[X_{\sigma_U-},X_{\sigma_U}]\subset D; p(t-\sigma_D,X_{\sigma_D},y)\right],
	\end{align*}
	where {the next to last} equality follows since $\sigma_U \leq \sigma_D$ and both integrands are equal on the set $\sigma_U = \sigma_D$. Denote the first expectation by $I_1$ and the second by $I_2$. Let $\mathcal{F}_{\sigma_U}$ be the usual $\sigma$-field of pre-$\sigma_U$ events and $\theta_{\sigma_U}$ be the usual shift, e.g., $\theta_{\sigma_U}X_t=X_{t+\sigma_U}$. By the strong Markov property, since $\sigma_D=\sigma_U+\sigma_D\circ\theta_{\sigma_U}$, we get
	\begin{align*}
	\begin{split}
	I_2&=\E_x\left[\sigma_D<t,\sigma_U<t,[X_{\sigma_U-},X_{\sigma_U}]\subset D; p(t-\sigma_D,X_{\sigma_D},y)\right]\\
	&=\E_x\left[\sigma_U<t,[X_{\sigma_U-},X_{\sigma_U}]\subset D; \E\left[\1_{\sigma_D<t}\ p(t-\sigma_D,X_{\sigma_D},y)|\mathcal{F}_{\sigma_U}\right]\right]\\
	&=\E_x\left[\sigma_U<t,[X_{\sigma_U-},X_{\sigma_U}]\subset D; \E\left[\1_{\sigma_U+\sigma_D\circ\theta_{\sigma_U}<t}\ p(t-\sigma_U-\sigma_D\circ\theta_{\sigma_U}, X_{\sigma_D}\circ\theta_{\sigma_U},y)|\mathcal{F}_{\sigma_U}\right]\right]\\
	&=\E_x\left[\sigma_U<t,[X_{\sigma_U-},X_{\sigma_U}]\subset D; \E_{X_s}\left[\sigma_D<t-s; p(t-s-\sigma_D, X_{\sigma_D},y)\right]\mid_{s=\sigma_U}\right]\\
	&=\E_x\left[\sigma_U<t,[X_{\sigma_U-},X_{\sigma_U}]\subset D;p(t-\sigma_U,X_{\sigma_U},y)-\phD(t-\sigma_U,X_{\sigma_U},y)\right],
	\end{split}
	\end{align*}
	so
	\begin{align*}
	\begin{split}
	I_1-I_2=\E_x[\sigma_U<t,[X_{\sigma_U-},X_{\sigma_U}]\subset D;\phD(t-\sigma_U,X_{\sigma_U},y)].\qquad\qedhere
	\end{split}
		\end{align*}
\end{proof}
The Hunt formula defines the kernel of the killed process and the kernel of the shot-down process in terms of $p$. Here is a direct relationship between  $\pD$ and $\phD$. 
\begin{lemma} \label{lem:nonlocal}
	For $t>0$ and $x,y \in \Rd$, the following equality holds
	$$
	\phD(t, x, y)  =\pD(t,x,y) - \E_x \left[\sigma_D < t, X_{\sigma_D} \in D \setminus D_
	{ X_{\sigma_D-}}; \pD(t - \sigma_D , X_{\sigma_D}, y)\right].
	$$
\end{lemma}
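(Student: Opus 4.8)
\noindent\emph{Proof plan.}
The plan is to obtain the identity directly from the two pointwise Hunt formulas---Definition~\ref{def_hunt} for $\phD$ and \eqref{eq:Hfk} for $\pD$---together with the strong Markov property at $\sigma_D$, in the spirit of the proof of Lemma~\ref{lem:Hffssd}. The case $x\in D^c$ is elementary, both sides reducing to $\pD(t,x,y)=\phD(t,x,y)$, so we may take $x\in D$.

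First I would split the event $\{\sigma_D<t\}$ according to the position of $X_{\sigma_D}$. On $E_1:=\{\sigma_D<t,\ X_{\sigma_D}\in D^c\}$ the process is in $D^c$ at time $\sigma_D$, hence $\tau_D\le\sigma_D$, and since $\sigma_D\le\tau_D$ always we get $\sigma_D=\tau_D$ there; combined with $X_{\tau_D}\in D^c$ (which holds for right-continuous $X$ and open $D$) this shows $E_1=\{\tau_D<t,\ \sigma_D=\tau_D\}$ and $p(t-\sigma_D,X_{\sigma_D},y)=p(t-\tau_D,X_{\tau_D},y)$ on $E_1$. On $E_2:=\{\sigma_D<t,\ X_{\sigma_D}\in D\}$ the inequality is strict, $\sigma_D<\tau_D$ (otherwise $X_{\sigma_D}=X_{\tau_D}\in D^c$). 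Since $\{\sigma_D<t\}=E_1\sqcup E_2$ and $\{\tau_D<t\}=E_1\sqcup(E_2\cap\{\tau_D<t\})$, subtracting the two Hunt formulas makes the $E_1$-terms cancel and leaves
$$
\pD(t,x,y)-\phD(t,x,y)=\E_x\big[E_2;\,p(t-\sigma_D,X_{\sigma_D},y)\big]-\E_x\big[E_2\cap\{\tau_D<t\};\,p(t-\tau_D,X_{\tau_D},y)\big].
$$

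Next I would rewrite the last expectation by the strong Markov property; note that $\sigma_D$ is a stopping time by Theorem~\ref{thm:stopping-time}, and $E_2\in\mathcal{F}_{\sigma_D}$. On $E_2$ we have $X_{\sigma_D}\in D$ and, since $\sigma_D\le\tau_D$, the additivity $\tau_D=\sigma_D+\tau_D\circ\theta_{\sigma_D}$, so $\{\tau_D<t\}\cap E_2=\{\tau_D\circ\theta_{\sigma_D}<t-\sigma_D\}\cap E_2$. Conditioning on $\mathcal{F}_{\sigma_D}$ and applying the Hunt formula \eqref{eq:Hfk} at time $t-s$ and starting point $z$ (that is, $\E_z[\tau_D<t-s;\,p(t-s-\tau_D,X_{\tau_D},y)]=p(t-s,z,y)-\pD(t-s,z,y)$) with $s=\sigma_D$ and $z=X_{\sigma_D}$ yields
$$
\E_x\big[E_2\cap\{\tau_D<t\};\,p(t-\tau_D,X_{\tau_D},y)\big]=\E_x\big[E_2;\,p(t-\sigma_D,X_{\sigma_D},y)-\pD(t-\sigma_D,X_{\sigma_D},y)\big].
$$
Substituting back, the $p$-terms cancel and $\pD(t,x,y)-\phD(t,x,y)=\E_x[E_2;\,\pD(t-\sigma_D,X_{\sigma_D},y)]$, which is the asserted identity with the shooting-down event written as $E_2=\{\sigma_D<t,\ X_{\sigma_D}\in D\}$.

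It then remains to identify $E_2$ with $\{\sigma_D<t,\ X_{\sigma_D}\in D\setminus D_{X_{\sigma_D-}}\}$, and this is the step I expect to demand the most care. The inclusion $\supseteq$ is immediate. For $\subseteq$, I would argue pathwise on $\{\sigma_D<\infty,\ X_{\sigma_D}\in D\}$: if $X_{\sigma_D-}\notin D$ then $X_{\sigma_D-}\in[X_{\sigma_D-},X_{\sigma_D}]\setminus D$ already; if $X_{\sigma_D-}\in D$, then $[X_{\sigma_D-},X_{\sigma_D}]$ must still meet $D^c$, since otherwise $\sigma_D\notin S:=\{u:[X_{u-},X_u]\cap D^c\ne\emptyset\}$, so---$\sigma_D$ being the infimum of $S$---there would be $t_n\downarrow\sigma_D$ lying in $S$; choosing $z_n\in[X_{t_n-},X_{t_n}]\cap D^c$ and using $X_{t_n}\to X_{\sigma_D}$, $X_{t_n-}\to X_{\sigma_D}$ (right-continuity of $X$ and existence of left limits) forces $z_n\to X_{\sigma_D}$, so $X_{\sigma_D}\in D^c$ as $D^c$ is closed, contradicting $X_{\sigma_D}\in D$. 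In either case $[X_{\sigma_D-},X_{\sigma_D}]\not\subset D$, i.e.\ $X_{\sigma_D}\notin D_{X_{\sigma_D-}}$, which finishes the identification and hence the proof.
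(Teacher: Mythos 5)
Your proof is correct and follows essentially the same route as the paper's: subtract the two Hunt formulas, cancel the contributions on the event $\{\sigma_D=\tau_D\}$, and apply the strong Markov property at $\sigma_D$ together with Hunt's formula for the killed process restarted at $X_{\sigma_D}$. The only substantive difference is that you supply a careful pathwise justification of the identification $\{X_{\sigma_D}\in D\}=\{\sigma_D<\tau_D\}=\{X_{\sigma_D}\in D\setminus D_{X_{\sigma_D-}}\}$, which the paper merely asserts in a one-line remark after its proof.
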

\begin{proof}
	Combining definitions of $\pD$ and $\phD$ for $t>0,\, x,y \in \Rd$, we get
	$$\pD(t,x,y) - \phD(t, x, y)  = \E_x[\sigma_D < t; p(t-\sigma_D, X_{\sigma_D}, y)] - \E_x [\tau_D < t; p(t-\tau_D, X_{\tau_D}, y)]. 
	$$
	We have $\sigma_D \leq \tau_D$ so the last expression equals
	$$\E_x[\sigma_D < t, \sigma_D < \tau_D; p(t-\sigma_D, X_{\sigma_D}, y)] - \E_x [\tau_D < t, \sigma_D < \tau_D; p(t-\tau_D, X_{\tau_D}, y)]. 
	$$
	We write $r^D(t, x, y)=\E_x[\tau_D < t; p(t-\tau_D, X_{\tau_D}, y)]$ to simplify the notation. Making use of the main part of the proof of ~\cite[Proposition 2.3]{MR2438694}, we will show below that
	\begin{equation}\label{eq:Hu}
	\E_x[\tau_D < t, \sigma_D < \tau_D; p(t-\tau_D, X_{\tau_D}, y)] = \E_x [\sigma_D < t, \sigma_D < \tau_D; r^D(t-\sigma_D, X_{\sigma_D}, y)].
	\end{equation}
	Indeed, the strong Markov property yields
	\begin{align*}
	\begin{split}
	\E_x &[\tau_D < t, \sigma_D < \tau_D; r^D(t-\sigma_D, X_{\sigma_D}, y)]\\
	&=\E_x \bigg[\sigma_D < t, \sigma_D < \tau_D; \E_{X_{\sigma_D}}[\tau_D <t-s; p(t-s-\tau_D, X_{\tau_D}, y)] \big|_{s = \sigma_D}  \bigg]\\
	&= \E_x \bigg[\sigma_D < t, \, \sigma_D < \tau_D, \tau_D \circ \Theta_{\sigma_D} + \sigma_D < t; p(t - \tau_D \circ \Theta_{\sigma_D} - \sigma_D, X_{\tau_D} \circ \Theta_{\sigma_D}, y)\bigg].
	\end{split}
	\end{align*}
	On the set $\sigma_D < \tau_D$ we have $\tau_D \circ \Theta_{\sigma_D} + \sigma_D = \tau_D$ and $X_{\tau_D} \circ \Theta_{\sigma_D} = X_{\tau_D}$, so simplifying the last expression we get  
	$$
	\E_x \left[\sigma_D < t, \, \sigma_D < \tau_D; p(t - \tau_D , X_{\tau_D}, y)\right],
	$$
	which ends the proof of \eqref{eq:Hu}. From \eqref{eq:Hu} we obtain
	\begin{align*}\pD(t,x,y) - \phD(t, x, y)
	=\E_x \left[\sigma_D < t, \, \sigma_D < \tau_D; \pD(t - \sigma_D , X_{\sigma_D}, y)\right].
	\qquad\qedhere
	\end{align*}
\end{proof}
Note that the condition $\sigma_D < \tau_D$ can be written as $X_{\sigma_D} \in D \setminus D_{X_{\sigma_D-}}$.
The next (Duhamel) formula shows $\pha_D$ as a \textit{nonlocal perturbation} of $\pD$, in the sense of \cite{MR3295773}. 
\begin{corollary} \label{fact:huntIke}
	For Lipschitz $D$, $x, y \in \Rd$ and $t>0$, the following equality holds
	$$\phD(t, x, y)  = \pD(t,x,y) - \int \limits_0^t \int \limits_{D} \int \limits_{D\setminus D_w} \!\!  \phD(s,x,w) \nu(z-w) \pD(t-s, z,y)\,dz\,dw\,ds.$$
\end{corollary}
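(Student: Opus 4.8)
The plan is to combine the identity of Lemma~\ref{lem:nonlocal} with the Ikeda--Watanabe machinery of Subsection~\ref{IWF}. By Lemma~\ref{lem:nonlocal},
\[
\phD(t,x,y)=\pD(t,x,y)-\E_x\big[\sigma_D<t,\ X_{\sigma_D}\in D\setminus D_{X_{\sigma_D-}};\ \pD(t-\sigma_D,X_{\sigma_D},y)\big],
\]
so it suffices to show that the last expectation equals $\int_0^t\!\int_D\!\int_{D\setminus D_w}\phD(s,x,w)\,\nu(z-w)\,\pD(t-s,z,y)\,dz\,dw\,ds$. I would get this by re-running the L\'evy-system computation from the proof of Theorem~\ref{I-W}, the only change being that the test function must be supported on the ``crossing'' configurations $\{(u,v,w):[v,w]\not\subset D\}$, that is, exactly where a shooting-down jump can occur, rather than on $\{w\in(\overline D)^{c}\}$.

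Concretely, fix $x,y\in\Rd$ and $t>0$, and put
\[
F(u,v,w)=\1_{[0,t)}(u)\,\1_D(v)\,\1_{D\setminus D_v}(w)\,\pD(t-u,w,y),
\]
a nonnegative Borel function on $[0,\infty)\times\Rd\times\Rd$ (to stay with bounded test functions one may first replace $\pD(t-u,w,y)$ by $\pD(t-u,w,y)\wedge n$ and let $n\to\infty$ at the end by monotone convergence). The factor $\1_{D\setminus D_v}(w)$ is what makes the argument work: for every jump time $0<u<\sigma_D$ one has $[X_{u-},X_u]\subset D$, hence $X_u\in D_{X_{u-}}$ and $F(u,X_{u-},X_u)=0$, so the compensated sum of jumps
\[
M_r=\sum_{0<u\le r,\,\dX_u\neq 0}F(u,X_{u-},X_u)-\int_0^r\!\int_{\Rd}F(u,X_u,X_u+z)\,\nu(z)\,dz\,du,
\]
which is a martingale by the L\'evy system \cite[Lemma~4.1]{MR3737628}, satisfies, upon stopping at $\sigma_D$ and invoking \eqref{eq:int1} as in the proof of Theorem~\ref{I-W},
\[
\E_x\big[F(\sigma_D,X_{\sigma_D-},X_{\sigma_D})\big]=\int_0^t\!\int_{\Rd}\phD(u,x,v)\Big(\int_{\Rd}F(u,v,v+z)\,\nu(z)\,dz\Big)dv\,du .
\]
Since $D$ is Lipschitz, $X_{\sigma_D-}\in D$ $\PP_x$-a.s.\ by Lemma~\ref{l.nhb} (recall Remark~\ref{r.nhb}), so the left-hand side is exactly $\E_x[\sigma_D<t,\ X_{\sigma_D}\in D\setminus D_{X_{\sigma_D-}};\ \pD(t-\sigma_D,X_{\sigma_D},y)]$, which is finite, being $\le\pD(t,x,y)$ by Lemma~\ref{lem:nonlocal}. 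On the right-hand side the substitution $z=w-v$ turns $\int_{\Rd}F(u,v,v+z)\,\nu(z)\,dz$ into $\1_{[0,t)}(u)\,\1_D(v)\int_{D\setminus D_v}\nu(w-v)\,\pD(t-u,w,y)\,dw$, and renaming $u\to s$, $v\to w$, $w\to z$ produces the asserted formula.

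The step to watch is the bookkeeping at the jump time $\sigma_D$: the support condition $w\notin D_v$ on $F$ is precisely what guarantees that the stopped compensated sum of jumps ``sees'' \emph{only} the shooting-down jump and nothing from $(0,\sigma_D)$, and one needs $X_{\sigma_D-}\notin\partial D$ a.s.\ so that $\1_D(X_{\sigma_D-})=1$ there --- this is the sole place the Lipschitz hypothesis enters (Lemma~\ref{l.nhb}). A secondary, routine point is integrability: unlike in Theorem~\ref{I-W}, for nonconvex $D$ there is no positive lower bound on the distance between $v$ and $D\setminus D_v$, so the crude estimate used there is unavailable; instead one uses that the expectation in question is dominated by $\pD(t,x,y)-\phD(t,x,y)\le p(t,x,y)<\infty$ (Lemma~\ref{lem:nonlocal}, Corollary~\ref{cor:comp}), or simply works with the truncated kernel $\pD(t-u,w,y)\wedge n$ and passes to the limit by monotone convergence on both sides.
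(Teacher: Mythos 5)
Your proposal is correct and takes essentially the same route as the paper: the paper's entire proof consists of substituting the joint distribution of $(\sigma_D, X_{\sigma_D-}, X_{\sigma_D})$ supplied by the Ikeda--Watanabe formula (Theorem~\ref{I-W} together with Remark~\ref{r.nhb}) into Lemma~\ref{lem:nonlocal}. You simply unpack the L\'evy-system step that the paper cites wholesale, and your remarks on where the Lipschitz hypothesis enters (via Lemma~\ref{l.nhb}) and on the loss of the $\operatorname{dist}(A,B)>0$ integrability bound are accurate refinements of details the paper leaves implicit.
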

\begin{proof}
	Remark~\ref{r.nhb}  
gives the joint probability distribution of $\sigma_D$, $ X_{\sigma_D -}$ and $X_{\sigma_D}$. Substituting this in Lemma~\ref{lem:nonlocal}, we obtain the statement.
\end{proof}

\subsection{The Chapman-Kolmogorov equation} 
	In this section we prove the Chapman-Kolmogorov equation for $\phD(t,x,y)$.
Let $\hat{\nu}(x,y) := \nu(y-x)\1_{D_x^c}(y)$, where
$D_x = \{y \in D : [x,y]\subset D \}$. Of course, $\hat{\nu}(x,y)=\hat{\nu}(y,x)$.
We start with a regularity result.
	\begin{lemma}
		The function $y\mapsto \phD(t,x,y)$ is continuous on $D$ for all $t>0$, $x\in D$.
	\end{lemma}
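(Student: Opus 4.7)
My plan is to use the Hunt-type formula of Definition~\ref{def_hunt} and write
\[
\phD(t,x,y)=p(t,x,y)-R(t,x,y), \qquad R(t,x,y):=\E_x\bigl[\sigma_D<t;\,p(t-\sigma_D,X_{\sigma_D},y)\bigr].
\]
Since the free heat kernel $p$ is jointly continuous, the task reduces to showing that $y\mapsto R(t,x,y)$ is continuous at each $y_0\in D$. Fix such $y_0$ and choose $\delta>0$ with $\overline{B(y_0,4\delta)}\subset D$. For a sequence $y_n\to y_0$ eventually in $B(y_0,\delta)$, joint continuity of $p_s(\cdot)$ for $s>0$ together with $\PP_x(\sigma_D=t)=0$ yields $p(t-\sigma_D,X_{\sigma_D},y_n)\to p(t-\sigma_D,X_{\sigma_D},y_0)$ $\PP_x$-a.s.\ on $\{\sigma_D<t\}$; the real work is to produce an integrable dominator.

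\textbf{Truncation and the easy split.} For $\epsilon\in(0,t)$ I would introduce
\[
R_\epsilon(t,x,y):=\E_x\bigl[\sigma_D\le t-\epsilon;\,p(t-\sigma_D,X_{\sigma_D},y)\bigr],
\]
on which $p(t-\sigma_D,X_{\sigma_D},y)\le p_\epsilon(0)\le C\epsilon^{-d/\alpha}$ uniformly, so $R_\epsilon(t,x,\cdot)$ is continuous on $\Rd$ by dominated convergence. It then suffices to prove that $R_\epsilon\to R$ uniformly on $B(y_0,\delta)$ as $\epsilon\downarrow 0$. Splitting $R-R_\epsilon$ by the location of $X_{\sigma_D}$, the contribution from $\{|X_{\sigma_D}-y_0|\ge 2\delta\}$ is easy: for $y\in B(y_0,\delta)$ one has $|X_{\sigma_D}-y|\ge\delta$, and interpolating the two terms of \eqref{pt:approx} at the crossover $s=|z|^\alpha$ gives $p_s(z)\le C|z|^{-d}$, so this piece is bounded by $C\delta^{-d}\PP_x(t-\epsilon<\sigma_D<t)$, which vanishes uniformly in $y$ as $\epsilon\downarrow 0$.

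\textbf{Main obstacle.} The remaining piece $\{|X_{\sigma_D}-y_0|<2\delta\}$ places $X_{\sigma_D}\in B(y_0,2\delta)\subset D$, where the heat kernel can blow up like $(t-\sigma_D)^{-d/\alpha}$ and a crude expectation bound breaks down once $d/\alpha\ge 1$. The geometric rescue is that $B(X_{\sigma_D},2\delta)\subset D$ forces $X_{\sigma_D^-}$ to lie outside $B(X_{\sigma_D},2\delta)$, whence $|X_{\sigma_D^-}-X_{\sigma_D}|\ge 2\delta$ and $\nu(X_{\sigma_D^-}-X_{\sigma_D})\le\nu(2\delta)$. Applying the L\'evy-system identity from the proof of Theorem~\ref{I-W} with the test function $F(u,z,w)=\1_{(t-\epsilon,t)}(u)\1_{B(y_0,2\delta)}(w)\1_{w\in D_z^c}\,p(t-u,w,y)$, and noting that the indicator $\1_{w\in D_z^c}$ vanishes on every jump before $\sigma_D$ so that only the shoot-down jump contributes to the sum, I would obtain
\[
\E_x\bigl[\sigma_D\in(t-\epsilon,t),\,X_{\sigma_D}\in B(y_0,2\delta);\,p(t-\sigma_D,X_{\sigma_D},y)\bigr]\;\le\;\nu(2\delta)\int_{t-\epsilon}^{t}\int_{B(y_0,2\delta)}p(t-u,w,y)\,dw\,du\;\le\;\nu(2\delta)\,\epsilon,
\]
using $\int_{\Rd}p(t-u,w,y)\,dw=1$. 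This bound is uniform in $y\in B(y_0,\delta)$ and vanishes as $\epsilon\downarrow 0$. Thus $R_\epsilon\to R$ uniformly on $B(y_0,\delta)$, so $R(t,x,\cdot)$, as a uniform limit of continuous functions, is continuous at $y_0$; since $y_0\in D$ was arbitrary, $\phD(t,x,\cdot)$ is continuous on $D$.
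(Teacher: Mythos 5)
Your proof is correct, but it follows a genuinely different route from the paper's. The paper starts from the Duhamel-type perturbation formula of Corollary~\ref{fact:huntIke}, $\phD(t,x,y)=\pD(t,x,y)-\int_0^t\int_D\int_D\phD(s,x,w)\hat\nu(w,z)\pD(t-s,z,y)\,dz\,dw\,ds$, and proves continuity of the triple integral in $y$ by dominated convergence, splitting the $z$-integration into a boundary layer $A=\{\delta_D(z)<\delta/2\}$ (where $|z-y_n|\ge\delta/2$ bounds $\pD(t-s,z,y_n)$ and the Ikeda--Watanabe formula supplies integrability of $\phD\hat\nu$) and its complement (where $B(z,\delta/2)\subset D_z$ bounds $\hat\nu(w,z)$). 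You instead work with the raw Hunt formula $\phD=p-R$ and prove continuity of $R(t,x,\cdot)$ by uniform approximation with the time-truncations $R_\epsilon$, splitting the remainder by the location of $X_{\sigma_D}$. The key geometric input is the same in both arguments: a shoot-down jump that lands $2\delta$-deep inside $D$ must have length at least $2\delta$, so the jump density along it is bounded by $\mathcal{A}(2\delta)^{-d-\alpha}$. What your version buys is that it never needs the continuity or the sharp two-sided estimates of $\pD(t,x,\cdot)$ from \cite{MR2677618}, only \eqref{pt:approx} and the radial monotonicity of $p_s$; the paper's version stays entirely at the level of deterministic kernels once Corollary~\ref{fact:huntIke} is granted. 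One step you should make explicit: the L\'evy-system/optional-stopping identity in the proof of Theorem~\ref{I-W} is established there for a bounded $F$ with $\operatorname{dist}(A,B)>0$, whereas your $F$ carries the unbounded weight $p(t-u,w,y)$. The argument still goes through because the indicator $\1_{w\in D_z^c}\1_{B(y_0,2\delta)}(w)$ forces $|w-z|\ge 2\delta$ on the whole trajectory (not just at $\sigma_D$), so $\int_0^\infty\int F(u,X_u,X_u+z)\nu(z)\,dz\,du\le \mathcal{A}(2\delta)^{-d-\alpha}\epsilon$ deterministically and the uniform integrability of the stopped martingale is restored; alternatively, a monotone-convergence passage from $F\wedge N$ does the job. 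With that sentence added, and with the same standing convention as in Theorem~\ref{I-W} and Remark~\ref{r.nhb} that the shoot-down is a.s.\ triggered by a jump at time $\sigma_D$ with $X_{\sigma_D^-}\in D$, your argument is complete.
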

	\begin{proof}
		Fix $t>0,\, x\in D$. {Corollary \ref{fact:huntIke} states:}
		$$\phD(t,x,y) = \pD(t,x,y) - \int_0^t\int_D\int_D\ \phD(s,x,w)\hat{\nu}(w,z)p_D(t-s,z,y)\,dz\,dw\,ds.$$
		Since $p_D(t,x,\cdot)$ is continuous, it is enough to prove the continuity of the function
		$$
		y\mapsto\int_0^t\int_D\int_D\ \phD(s,x,w)\hat{\nu}(w,z)p_D(t-s,z,y)\,dz\,dw\,ds.
		$$
		To this end, take $D\ni y_n\to y\in D$. Let $\delta=\inf_n \text{dist}(y_n,D^c)>0,\text{ and } A=\{z\in D : \text{dist}(z,D^c)<\delta/2\}$. 
\newline If $z\in A$, then $|z-y_n|\geq\delta/2$ for every $n$, so
		\begin{align*}
		\phD(s,x,w)\hat{\nu}(w,z)p_D(t-s,z,y_n)\1_A(z)\leq C2^{d+\alpha}t\delta^{-d-\alpha} \phD(s,x,w) \hat{\nu}(w,z) \1_A(z).
		\end{align*}
		This majorant is integrable by the Ikeda-Watanabe formula:
		\begin{align*}
		\int_0^t\int_D\int_A \phD(s,x,w)\hat{\nu}(w,z)\,dz\,dw\,ds= \PP_x(\sigma_D<t,X_{\sigma_D}\in A)\leq 1<\infty,
		\end{align*}
		so our integrand converges by the Lebesgue dominated convergence theorem for $n\to \infty.$
		Now, assume $z\in D\setminus A$. We have $B(z,\delta/2)\subset D_z$, thus
		\begin{align*}
		&\phD(s,x,w) \hat{\nu}(w,z) p_D(t-s,z,y_n) \1_{D\setminus A}(z)\\
		&\leq \mathcal{A}_{{d,\alpha}}2^{d+\alpha}\delta^{-d-\alpha} \phD(s,x,w) p_D(t-s,z,y_n) \1_{D\setminus A}(z)\\
		&\leq C\mathcal{A}_{{d,\alpha}}2^{d+\alpha}\delta^{-d-\alpha} \phD(s,x,w)\left((t-s)^{-d/\alpha} \land \frac{t-s}{|z-y_n|^{d+\alpha}}\right)\1_{D\setminus A}(z).
		\end{align*}
		Let $\lambda=\sup_n|y-y_n|$. Then $\inf_n |z-y_n| \geq |z-y| - \lambda,$ so
		\begin{align*}
		&\phD(s,x,w) \hat{\nu}(w,z) p_D(t-s,z,y_n) \1_{D\setminus A}(z)\\
		&\leq \frac{C\mathcal{A}_{{d,\alpha}}2^{d+\alpha}}{\delta^{d+\alpha}} \phD(s,x,w) \left((t-s)^{-d/\alpha} \1_{B(y,2\lambda)}(z) + \frac{2^{d+\alpha}(t-s)}{|z-y|^{d+\alpha}} \1_{B(y,2\lambda)^c}(z) \right) \1_{D\setminus A}(z).
		\end{align*}
		This function is integrable with respect to $dw\,dz$. By the Lebesgue dominated convergence theorem, we get
		\begin{align*}
		&\int_D\int_{D\setminus A} \phD(s,x,w)\hat{\nu}(w,z) p_D(t-s,z,y_n)\,dz\,dw\\
		&\rightarrow \int_D\int_{D\setminus A} \phD(s,x,w)\hat{\nu}(w,z) p_D(t-s,z,y)\,dz\,dw.
		\end{align*}
		Further, by the symmetry of $p_D$, we can write
		\begin{align*}
		&\int_D\int_{D\setminus A} \phD(s,x,w)\hat{\nu}(w,z) p_D(t-s,z,y_n)\,dz\,dw\\
		&\leq \mathcal{A}_{{d,\alpha}}2^{d+\alpha}\delta^{-d-\alpha}\int_D\int_{D\setminus A}\phD(s,x,w) p_D(t-s,y_n,z)\,dz\,dw\leq\mathcal{A}_{{d,\alpha}}2^{d+\alpha}\delta^{-d-\alpha},
		\end{align*}
		and, applying the Lebesgue dominated convergence theorem, we get
		\begin{align*}
		\int_0^t\int_D\int_{D\setminus A} \phD(s,x,w)\hat{\nu}(w,z) p_D(t-s,z,y_n)\,dz\,dw\,ds\\
		\to \int_0^t\int_D\int_{D\setminus A}\phD(s,x,w)\hat{\nu}(w,z) p_D(t-s,z,y)\,dz\,dw\,ds,
		\end{align*}
as $n\to \infty$. Thus, $D\ni y\mapsto p_D(t,x,\cdot)$ is continuous.
	\end{proof}
	\noindent
	The proof of the following theorem is based on the proof of \cite[Theorem 2.4]{MR1329992}.
	\begin{theorem}[Chapman-Kolmogorov equation]	For all $t>s>0, x,y\in D$,
		\begin{align*}
		\phD(t,x,y)=\int_D \phD(s,x,z)\phD(t-s,z,y)\,dz.
		\end{align*}
	\end{theorem}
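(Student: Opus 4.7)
The plan is to derive the identity via the Markov property at the fixed time $s$ and then promote an almost-everywhere equality in $y$ to a pointwise one by the continuity lemma just established.

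First, I would argue that on the event $\{s<\sigma_D\}$ one has the path decomposition $\sigma_D = s + \sigma_D\circ\theta_s$ almost surely, where $\theta_s$ denotes the shift operator. Two ingredients are needed: (i) that $X_s\in D$ a.s.\ on $\{s<\sigma_D\}$, which follows because $X_s$ has a density so $X_s\in\partial D$ has probability zero, while no shot-down in $[0,s]$ forces $X_s\in\overline{D}$; and (ii) that a shot-down cannot occur at precisely time $s$, because by quasi-left continuity at a fixed deterministic time $\Prob_x(\dX_s\ne 0)=0$, so the segment $[X(s^-),X(s)]$ degenerates to a point lying in $D$.

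Second, I would apply the Markov property at $s$ together with the previously proved defining identity of $\phD$ to obtain, for any Borel $A\subset D$,
\begin{align*}
\Prob_x(t<\sigma_D,\ X_t\in A)
&= \E_x\bigl[s<\sigma_D;\ \Prob_{X_s}(t-s<\sigma_D,\ X_{t-s}\in A)\bigr]\\
&= \E_x\Bigl[s<\sigma_D;\ \int_A \phD(t-s, X_s, y)\,dy\Bigr]\\
&= \int_A \int_D \phD(s,x,z)\,\phD(t-s,z,y)\,dz\,dy,
\end{align*}
after swapping integrals by Fubini--Tonelli. Since the same probability equals $\int_A\phD(t,x,y)\,dy$ and $A\subset D$ is arbitrary, the two integrands agree for Lebesgue-almost every $y\in D$.

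Third, to upgrade to the stated pointwise equality, I would invoke continuity in $y$ of both sides on $D$. The left-hand side is continuous by the lemma immediately preceding the theorem. For the right-hand side, continuity follows from the continuity of $\phD(t-s,z,\cdot)$ on $D$ combined with dominated convergence, using the uniform majorant $\phD(t-s,z,y)\le p_{t-s}(z-y)\le c(t-s)^{-d/\alpha}$ and the integrability $\int_D\phD(s,x,z)\,dz\le 1$. The main obstacle is justifying the decomposition $\sigma_D=s+\sigma_D\circ\theta_s$ on $\{s<\sigma_D\}$: a priori the event $\{t<\sigma_D\}$ is determined by the segments $[X(u^-),X(u)]$, so the past and future of time $s$ are not obviously decoupled; quasi-left continuity at a fixed deterministic time is precisely what removes this obstruction and reduces the problem to a standard Chapman--Kolmogorov argument.
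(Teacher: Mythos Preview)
Your proposal is correct and follows essentially the same route as the paper: apply the Markov property at the fixed time $s$ to get the identity for almost every $y$, then upgrade to a pointwise equality using continuity of $y\mapsto\phD(t,x,y)$ together with the bound $\phD(t-s,z,y)\le c(t-s)^{-d/\alpha}$ and dominated convergence. The only difference is that you spell out the justification for the path decomposition $\sigma_D=s+\sigma_D\circ\theta_s$ on $\{s<\sigma_D\}$ via quasi-left continuity and absolute continuity of the law of $X_s$, whereas the paper simply invokes ``the Markov property'' without elaborating.
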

	\begin{proof}
		Fix $t>s>0, x\in D$. Let $A\subset D$. By the Markov property,
		\begin{align*}
		\int_A \phD(t,x,y)\,dy=\PP_x\left(t<\sigma_D,X_t\in A\right)=\E_x\left[s<\sigma_D;\PP_{X_s}\left(t-s<\sigma_D, X_{t-s}\in A \right) \right]\\
		=\E_x\left[s<\sigma_D;\int_A \phD(t-s,X_{t-s},y)\,dy\right]=\int_A\int_D \phD(s,x,z)\phD(t-s,z,y)\,dz\,dy.
		\end{align*}
		Thus $\phD(t,x,y) =\int_Ddz\ \phD(s,x,z)\phD(t-s,z,y)$ for almost every $y\in D$. The identity actually holds for all $y\in D$. Indeed, the function on the left-hand side is continuous in $y$. Since $\phD(t-s,z,y)\leq C(t-s)^{-d/\alpha}$, by the Lebesgue dominated convergence theorem the right-hand side is also continuous. The proof is complete.
	\end{proof}

\begin{lemma}\label{lem:phk}
	$\phD(t,x,y)>0$ for $x,y\in D$, $t>0$. 
\end{lemma}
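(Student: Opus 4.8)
The plan is to establish strict positivity of $\phD(t,x,y)$ for $x,y\in D$, $t>0$, by exploiting the Chapman--Kolmogorov equation just proven together with the fact that $D$ is connected (recall we may assume $D$ is a domain). First I would observe that $\phD(t,x,x)>0$ for every $x\in D$ and $t>0$: for any $x\in D$ choose a ball $B(x,\rho)\subset D$ so small that $B(x,\rho)$ is convex and every segment between two of its points stays inside $D$; then, up to time $\sigma_D$, the process started at $x$ behaves at least as well as the process killed on exiting $B(x,\rho)$ in the sense that $\phD(t,x,y)\ge p_{B(x,\rho)}(t,x,y)$ for $y\in B(x,\rho)$ (since on $\{t<\tau_{B(x,\rho)}\}$ no shooting-down has occurred, because $B(x,\rho)$ is convex and contained in $D$), and the Dirichlet heat kernel $p_{B(x,\rho)}$ is strictly positive on $B(x,\rho)\times B(x,\rho)$ by the classical theory of the killed stable process. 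In particular $\phD(t,x,y)>0$ for all $y$ in a neighbourhood of $x$.

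Next I would upgrade this local positivity to global positivity on $D$ using a chaining argument. Fix $x,y\in D$ and $t>0$. Since $D$ is open and connected, it is polygonally connected, so there is a finite chain of points $x=x_0,x_1,\dots,x_n=y$ with each segment $[x_{i-1},x_i]\subset D$; covering this polygonal path by finitely many small balls $B_i\subset D$ as above, one produces points $z_1,\dots,z_{n-1}$ and radii so that $z_i$ lies in a ball around which positivity of $\phD$ has already been established, and consecutive balls overlap on sets of positive Lebesgue measure. Writing $t=t_1+\dots+t_n$ with each $t_i>0$ and iterating Chapman--Kolmogorov,
\[
\phD(t,x,y)=\int_D\cdots\int_D \phD(t_1,x,w_1)\phD(t_2,w_1,w_2)\cdots\phD(t_n,w_{n-1},y)\,dw_1\cdots dw_{n-1},
\]
the integrand is strictly positive on a product of sets of positive measure (each factor being positive because the two arguments are close, by the local result), hence the integral is strictly positive.

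The main obstacle I expect is making the ``local positivity'' step fully rigorous, i.e. producing the lower bound $\phD(t,x,y)\ge p_{B}(t,x,y)>0$ for a suitable convex ball $B=B(x,\rho)\subset D$. The inequality $\sigma_D\ge\tau_B$ for the process started in $B$ (when $B$ is convex and $B\subset D$) follows because a jump whose segment meets $D^c$ a fortiori exits $B$ first — but one must phrase this carefully in terms of the segment $[X(t^-),X(t)]$ and the inclusion $B\subset D$, and note that $B$ convex gives $\sigma_B=\tau_B$. Given $\sigma_D\ge\tau_B$, Hunt's formula or directly the probabilistic definition yields $\PP_x(t<\sigma_D,X_t\in A)\ge\PP_x(t<\tau_B,X_t\in A)$, whence $\phD(t,x,\cdot)\ge p_B(t,x,\cdot)$ a.e., and then by continuity of $y\mapsto\phD(t,x,y)$ on $D$ (already proven) together with continuity of $p_B$ the inequality holds pointwise. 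The strict positivity of $p_B$ on $B\times B$ is standard (e.g. from the explicit lower bounds for Dirichlet heat kernels of the fractional Laplacian on balls, or from irreducibility of the killed process). Everything else is bookkeeping with the Chapman--Kolmogorov semigroup property.
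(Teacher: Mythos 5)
Your proposal is correct and follows essentially the same route as the paper's proof: a chain of overlapping convex balls in $D$ joining $x$ to $y$, the comparison $\phD\ge \hat p_{B}=p_{B}$ on convex balls $B\subset D$ (domain monotonicity plus $\sigma_B=\tau_B$ for convex $B$), and the Chapman--Kolmogorov equation with a splitting of $t$ to propagate the local positivity of the Dirichlet heat kernel along the chain.
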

\begin{proof}
	There is $n\geq2$ and a sequence of balls $\{B(x_{i},r_{i})\}_{i=1}^{n}$ in $D$ such that $x_{1}=x$, $x_{n}=y$ and $B(x_{i},r_{i})\cap B(x_{i+1},r_{i+1})\neq\emptyset$ for $i=1\dots n$.  
	By the Chapman-Kolmogorov equation and domain monotonicity of the heat kernel
	\begin{align*}
	\phD(t,x,y)&=\int_{D}
	\cdots\int_{D}\phD(t/n,x,z_{1})
	\cdots \phD(t/n,z_{n-1},y)\,dz_{1}
	\cdots\,dz_{n-1}\\
	&\geq\int_{B_{1}}
	\cdots\int_{B_{n-1}}\hat{p}_{B_1}(t/n,x,z_{1})
	\cdots \hat{p}_{B_n}(t/n,z_{n-1},y)\,dz_{1}
	\cdots\,dz_{n-1}.
	\end{align*}
Since the balls are convex sets{, we have $\hat{p}_{B_i} = p_{B_i}$ for $i=1\dots n$, because $\sigma_{B_i}=\tau_{B_i}$; see the Introduction, Definition~\ref{def_hunt}, and \eqref{eq:Hfk}. Therefore,} the right-hand side equals
	$$\int_{B_{1}}
	\cdots\int_{B_{n-1}}p_{B_1}(t/n,x,z_{1})
	\cdots p_{B_n}(t/n,z_{n-1},y)\,dz_{1}
	\cdots\,dz_{n-1},$$
	which is bigger than $0$ by the strict positivity of the heat kernel of killed process; see \cite{MR2677618} or \cite{MR2722789}.
\end{proof}

\subsection{Symmetry}
The goal of this section is to prove the following result.
\begin{theorem}\label{thm:symmetry} For all $x,y\in\Rd,\ t>0$, we have $\phD(t,x,y)=\phD(t,y,x)$.
\end{theorem}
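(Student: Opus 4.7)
The plan is to combine the symmetry of the free heat kernel $p_t(x,y)=p_t(y-x)=p_t(y,x)$ with the time-reversal invariance of the shot-down event. The key geometric observation is that for a c\`adl\`ag path $\omega:[0,t]\to\Rd$ and the reversed path $\tilde\omega(s)=\omega((t-s)^-)$, the jumps of $\tilde\omega$ occur precisely at times $t-s_0$ where $s_0$ ranges over jump times of $\omega$, and the jump segment
\begin{equation*}
[\tilde\omega((t-s_0)^-),\tilde\omega(t-s_0)]=[\omega(s_0),\omega(s_0^-)]=[\omega(s_0^-),\omega(s_0)]
\end{equation*}
is preserved as a set. Since additionally $\{\omega(s)\in D\text{ for all }s\in[0,t]\}$ is time-symmetric, the event $\{\sigma_D(\omega)>t\}$ is invariant under $\omega\mapsto\tilde\omega$.

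I would then show that for all bounded Borel $f,g$ supported in $D$,
\begin{equation*}
\int_D\int_D f(x)g(y)\,\phD(t,x,y)\,dy\,dx = \int_D\int_D g(x)f(y)\,\phD(t,x,y)\,dy\,dx.
\end{equation*}
The left side equals $\int_D f(x)\,\E_x[g(X_t)\1_{\sigma_D>t}]\,dx$. Along a dyadic mesh $0=t_0<t_1<\cdots<t_n=t$ one approximates $\sigma_D$ by a stopping time depending only on the finite-dimensional data $\{(X_{t_i^-},X_{t_i})\}_{i\le n}$; the joint density of these data factorizes into the symmetric kernels $p_{t_i-t_{i-1}}(x_{i-1},x_i)$, and the simultaneous reversal $(x_0,\ldots,x_n)\mapsto(x_n,\ldots,x_0)$ together with $\Delta t_i\mapsto \Delta t_{n+1-i}$ preserves both the product of free kernels (by symmetry of $p_t$) and the discrete shot-down condition (by the geometric observation). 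Passing to the limit in the mesh identifies the expression with $\int_D g(y)\,\E_y[f(X_t)\1_{\sigma_D>t}]\,dy$, which is the right-hand side. This yields $\phD(t,x,y)=\phD(t,y,x)$ for a.e.~$(x,y)\in D\times D$, and the joint continuity of $\phD$ on $D\times D$ established earlier in this section upgrades the identity to everywhere on $D\times D$. For $x\in D^c$ or $y\in D^c$, the conclusion follows directly from Definition~\ref{def_hunt} (both sides vanish or reduce trivially).

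The main technical obstacle is the rigorous approximation of the continuous-time event $\{\sigma_D>t\}$ by events measurable with respect to finite-dimensional data. One must verify that $\{\sigma_D>t\}$ lies in the $\sigma$-algebra generated by $\{(X_s,X_{s^-}):s\in[0,t]\cap\Q\}$, which uses Theorem~\ref{thm:stopping-time}, Lemma~\ref{lem1}, and c\`adl\`ag regularity, and then check that the spatial-temporal reversal on the finite-dimensional side passes cleanly to the limit. A cleaner but less elementary alternative is to invoke classical duality theorems for L\'evy processes to reverse the entire path in one stroke under the Lebesgue initial distribution, whereupon the identity becomes immediate from the symmetry of $\nu$.
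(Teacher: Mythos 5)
Your core idea --- symmetry of $p$ combined with invariance of the event $\{\sigma_D\ge t\}$ under time reversal, because each jump segment $[X_{s^-},X_s]$ is preserved as a set --- is exactly the mechanism of the paper's proof. The paper, however, implements it by constructing the bridge measure $\PP_x^{t,y}$ (Kolmogorov extension plus a tightness check yielding a c\`adl\`ag, stochastically continuous version), proving in Lemma~\ref{symdistr} that the reversed bridge from $y$ to $x$ has the law of the bridge from $x$ to $y$ by comparing finite-dimensional distributions (using that $X_{s-}=X_s$ a.s.\ at each fixed $s$ under the bridge), and then reading off the \emph{pointwise} identity from Lemma~\ref{Huntbridge}, $\phD(t,x,y)=\PP_x^{t,y}(\sigma_D\ge t)\,p(t,x,y)$. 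Your route is genuinely different, and as written it has a gap at its central step. At a deterministic mesh time $t_i$ one has $X_{t_i^-}=X_{t_i}$ a.s., so the ``finite-dimensional data'' $\{(X_{t_i^-},X_{t_i})\}$ is just the discrete skeleton $(X_{t_0},\dots,X_{t_n})$, and the skeleton does not determine $\{\sigma_D>t\}$. The discrete condition you would actually reverse is that the chords $[X_{t_{i-1}},X_{t_i}]$ lie in $D$, and proving that the indicators of these chord events converge a.s.\ to $\1_{\{\sigma_D\ge t\}}$ is the entire difficulty: it requires a Skorokhod-type argument that the chords of a c\`adl\`ag path converge to its jump segments and to points of its range, the observation that on $\{\sigma_D\ge t\}$ the closure of the union of all segments $[X_{s^-},X_s]$ is a compact subset of $D$ (hence at positive distance from $D^c$), and control of the boundary contribution in the $\limsup$ direction. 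The last point is genuinely problematic because Theorem~\ref{thm:symmetry} is stated for an \emph{arbitrary} open $D$, whose boundary may even have positive Lebesgue measure; the paper's bridge argument never discretizes the event and therefore needs no boundary regularity.

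A second, smaller gap is the upgrade from a.e.\ to pointwise. You invoke ``joint continuity of $\phD$ on $D\times D$ established earlier in this section,'' but the paper only proves continuity of $y\mapsto\phD(t,x,y)$ for fixed $x$; continuity in the first variable is not available before symmetry is known, so the citation is circular. This can be repaired by a Chapman--Kolmogorov bootstrap (insert $\phD(t,x,y)=\int_D\phD(t/2,x,z)\phD(t/2,z,y)\,dz$ and apply the a.e.\ identity in the $z$-variable, then use continuity in the second variable), but that is a step you must carry out. Your closing alternative --- classical duality under the Lebesgue initial measure --- likewise only yields $L^2$-symmetry of $\hat P^D_t$, i.e.\ the a.e.\ statement, and faces the same upgrade issue; this is precisely what the paper's pointwise bridge identity is designed to avoid.
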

The proof is given at the end of the section, after several auxiliary results.	
Fix $x,y\in\Rd,\ t>0$. We will construct, in Lemma~\ref{lem:btxy}, the so-called bridge between $x$ and $y$ in time $t$. We begin by defining the finite-dimensional distributions of the bridge.
\begin{definition}
	For $n\in \N,\ s_1,\dots,s_n\in (0,t),\ s_1<\dots<s_n$, we define measure $\pi_{s_1,\dots,s_n}$ on $(\Rd)^n$ by
	$$
	\pi_{s_1,\dots,s_n}(A_1\times\dots\times A_n) = \int_{A_n} \dots \int_{A_1} \frac{p(s_1,x,z_1)p(s_2-s_1,z_1,z_2)\dots p(t-s_n,z_n,y)}{p(t,x,y)} dz_1\dots dz_n
	$$
	for $A_1,\dots,A_n\subset \Rd$. 
	We also define
	$$
	\pi_{0,s_1,\dots,s_n}(A_0\times A_1\times\dots\times A_n)=\delta_x(A_0)\pi_{s_1,\dots,s_n}(A_1\times\dots\times A_n),
	$$
	$$
	\pi_{s_1,\dots,s_n,t}(A_1\times\dots\times A_n\times A_{n+1})=\pi_{s_1,\dots,s_n}(A_1\times\dots\times A_n)\delta_y(A_{n+1}),
	$$
	$$
	\pi_{0,s_1,\dots,s_n,t}(A_0\times A_1\times\dots\times A_n\times A_{n+1})=\delta_x(A_0)\pi_{s_1,\dots,s_n}(A_1\times\dots\times A_n)\delta_y(A_{n+1}).
	$$
	As usual, we extend the definition to $0\leq s_1\leq s_n\leq t$ by skipping the repeated $s_i$'s, e.g.,
	$$
	\pi_{s,s}(A_1\times A_2)=\pi_s(A_1\cap A_2).
	$$
\end{definition}
\begin{lemma}\label{lemstochcont} There is a constant $C>0$ such that for all $s\in(0,t)$ and $\varepsilon>0$,
	\begin{align*}
	\pi_s(z:|z-y|\geq \varepsilon)\leq \frac{C(t-s)}{p(t,x,y)\varepsilon^{d+\alpha}},
	\end{align*}
	and for $0<s_1<s_2<t, \varepsilon>0$,
	\begin{align*}
	\pi_{s_1,s_2}((z_1,z_2)\in \Rd\times\Rd: |z_1-z_2|\geq\varepsilon) \leq \frac{C(s_2-s_1)}{p(t,x,y)\varepsilon^{d+\alpha}}.
	\end{align*}
	
\end{lemma}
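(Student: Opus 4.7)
The plan is to use only the upper bound half of the heat kernel estimate \eqref{pt:approx}, namely $p_u(v)\le C\,u/|v|^{d+\alpha}$, applied to whichever factor in the density of $\pi_{s}$ (respectively $\pi_{s_1,s_2}$) corresponds to the jump that is forced to be long by the event under consideration. The remaining factors then integrate to one by Fubini and the fact that each $p(u,\cdot,\cdot)$ is a transition probability density.

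For the first inequality, I would write out
\begin{align*}
\pi_s(\{z:|z-y|\ge\varepsilon\})
=\frac{1}{p(t,x,y)}\int_{|z-y|\ge\varepsilon} p(s,x,z)\,p(t-s,z,y)\,dz,
\end{align*}
and bound the second factor in the integrand by \eqref{pt:approx}:
\begin{align*}
p(t-s,z,y)\le \frac{C(t-s)}{|z-y|^{d+\alpha}}\le \frac{C(t-s)}{\varepsilon^{d+\alpha}},\qquad |z-y|\ge\varepsilon.
\end{align*}
Pulling this constant out and using $\int_{\Rd}p(s,x,z)\,dz=1$ yields the claim.

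For the second inequality the argument is analogous: on $\{|z_1-z_2|\ge\varepsilon\}$ the middle factor satisfies $p(s_2-s_1,z_1,z_2)\le C(s_2-s_1)/\varepsilon^{d+\alpha}$ by \eqref{pt:approx}, so
\begin{align*}
\pi_{s_1,s_2}(\{|z_1-z_2|\ge\varepsilon\})
\le \frac{C(s_2-s_1)}{\varepsilon^{d+\alpha}\,p(t,x,y)}\int_{\Rd}\int_{\Rd} p(s_1,x,z_1)\,p(t-s_2,z_2,y)\,dz_1\,dz_2,
\end{align*}
and each single-variable integral equals one, giving the stated bound.

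There is no real obstacle here; the estimate is entirely a routine consequence of the global upper bound in \eqref{pt:approx} together with the fact that $\pi_s$ and $\pi_{s_1,s_2}$ are defined through a convolution-type density normalized by $p(t,x,y)$. The only mild subtlety is that the constant $C$ is the one coming from the comparison $\approx$ in \eqref{pt:approx} and therefore depends only on $d$ and $\alpha$, which should be noted so that later applications (e.g.\ the construction of the bridge in Lemma~\ref{lem:btxy}) can use it uniformly in $s$, $s_1$, $s_2$ and $\varepsilon$.
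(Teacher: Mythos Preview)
Your proof is correct and follows essentially the same approach as the paper: both use the upper bound from \eqref{pt:approx} on the factor $p(t-s,z,y)$ (respectively $p(s_2-s_1,z_1,z_2)$) over the region where the corresponding increment exceeds $\varepsilon$, and then integrate out the remaining probability densities.
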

\begin{proof}	By \eqref{pt:approx},
	$$
	p(s,z,w) \approx s^{-d/\alpha}\wedge\frac{s}{|z-w|^{d+\alpha}}.
	$$
	To prove the first inequality we 
	write
	\begin{align*}
	\pi_s(z:|z-y|\geq \varepsilon)&=\int_{B_\varepsilon(y)^c}\frac{p(s,x,z)p(t-s,z,y)}{p(t,x,y)}\,dz\\
	&\leq \frac{C(t-s)}{p(t,x,y)\varepsilon^{d+\alpha}}\int_{B_\varepsilon(y)^c}p(s,x,z)\,dz\\
	&\leq\frac{C(t-s)}{p(t,x,y)\varepsilon^{d+\alpha}}.
	\end{align*}
	To prove the second inequality 
	we write
	\begin{align*}
	&\pi_{s_1,s_2}((z_1,z_2): |z_1-z_2|\geq\varepsilon) = \int_{\Rd}\int_{B_\varepsilon(z_1)^c} \frac{p(s_1,x,z_1)p(s_2-s_1,z_1,z_2)p(t-s_2,z_2,y)}{p(t,x,y)}\,dz_2\,dz_1\\
	&\leq \frac{C(s_2-s_1)}{\varepsilon^{d+\alpha}}\int_{\Rd}\int_{B_\varepsilon(z_1)^c} \frac{p(s_1,x,z_1)p(t-s_2,z_2,y)}{p(t,x,y)}\,dz_2\,dz_1
	\leq \frac{C(s_2-s_1)}{p(t,x,y)\varepsilon^{d+\alpha}}.\qedhere
	\end{align*}
\end{proof}
\begin{lemma}\label{lem:btxy}
	There exists a probability measure $\PP_x^{t,y}$ on $\Omega$, whose finite-dimensional distributions are given by $\pi_{s_1,\dots,s_n}$, $0\leq s_1\leq\dots\leq s_n\leq t$.
\end{lemma}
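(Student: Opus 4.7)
The plan is to apply the Kolmogorov extension theorem to obtain a measure on the product space $(\Rd)^{[0,t]}$ with the prescribed finite-dimensional distributions, and then to upgrade this to a measure on $\Omega$ by selecting a c\`adl\`ag modification using absolute continuity against the free stable process.

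First, I would verify that $\{\pi_{s_1,\dots,s_n}\}$ forms a consistent family. Integrating out an intermediate coordinate $z_k$ requires only the Chapman--Kolmogorov equation for $p$, namely
\[
\int_{\Rd} p(s_k-s_{k-1},z_{k-1},z_k)\,p(s_{k+1}-s_k,z_k,z_{k+1})\,dz_k = p(s_{k+1}-s_{k-1},z_{k-1},z_{k+1}),
\]
while the normalizing denominator $p(t,x,y)$ is unaffected; marginalizing thus returns precisely $\pi_{s_1,\dots,s_{k-1},s_{k+1},\dots,s_n}$. The boundary clauses $\pi_{0,\dots}$ and $\pi_{\dots,t}$ are consistent by the definitions involving $\delta_x$ and $\delta_y$. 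Kolmogorov's theorem then yields a probability measure $\tilde\PP_x^{t,y}$ on $(\Rd)^{[0,t]}$ whose finite-dimensional marginals are the $\pi_{s_1,\dots,s_n}$.

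Next, I would record the key Doob $h$-transform identity: for every $s\in(0,t)$ and every bounded $\mathcal{F}_s$-measurable $F$,
\[
\E_{\tilde\PP_x^{t,y}}[F]=\E_x\!\left[F\cdot \frac{p(t-s,X_s,y)}{p(t,x,y)}\right],
\]
where $X$ under $\PP_x$ is the original c\`adl\`ag stable process. This follows directly from the definition of $\pi_{s_1,\dots,s_n}$ and the Markov property, and it shows that $\tilde\PP_x^{t,y}|_{\mathcal{F}_s}$ is absolutely continuous with respect to $\PP_x|_{\mathcal{F}_s}$. Since $\PP_x$ is carried by c\`adl\`ag paths, this allows a c\`adl\`ag realization on each $[0,s]$ with $s<t$, and these realizations are consistent as $s\uparrow t$, yielding a c\`adl\`ag version of the coordinate process on $[0,t)$.

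The main obstacle is to extend c\`adl\`ag regularity up to the endpoint $t$, i.e.\ to show that $\lim_{s\to t^-}X_s=y$ almost surely. Stochastic continuity at $t$ is provided by Lemma~\ref{lemstochcont}, which gives $\tilde\PP_x^{t,y}(|X_s-y|\geq\varepsilon)\to 0$ as $s\to t^-$. Combining this with the c\`adl\`ag regularity on $[0,t)$ and a Borel--Cantelli argument along a suitable deterministic sequence $s_n\uparrow t$ (or, alternatively, Doob's regularization theorem applied to the nonnegative $\PP_x$-martingale $s\mapsto p(t-s,X_s,y)/p(t,x,y)$ on $[0,t)$), the left limit $X_{t-}$ exists a.s.\ and equals $y$. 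Setting $X_t:=y$ produces a c\`adl\`ag path on $[0,t]$, which, extended arbitrarily (for example by freezing at $y$) to $[0,\infty)$, defines the desired probability measure $\PP_x^{t,y}$ on $\Omega$.
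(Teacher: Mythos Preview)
Your approach is genuinely different from the paper's. The paper verifies the two hypotheses of Billingsley's Theorem~13.6 (a moment condition on $\pi_{s_1,s,s_2}$ and stochastic continuity) to produce the c\`adl\`ag realization on $[0,t]$ directly in one stroke. You instead use the Doob $h$-transform: the identity $\E_{\tilde\PP_x^{t,y}}[F]=\E_x[F\cdot p(t-s,X_s,y)/p(t,x,y)]$ shows $\PP_x^{t,y}|_{\mathcal F_s}\ll\PP_x|_{\mathcal F_s}$ for each $s<t$, so c\`adl\`ag regularity on $[0,t)$ is inherited from the underlying stable process. This is a standard and elegant route for Markov bridges, and it bypasses the moment computation.

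There is, however, a real gap at the endpoint $t$. Convergence $X_{s_n}\to y$ along a summable sequence via Borel--Cantelli does not yield $X_{t-}=y$: since the process has jumps, you must control the oscillation of $X$ on each interval $[s_n,s_{n+1}]$, and no maximal inequality has been established. Your alternative martingale suggestion does not close the gap either: the density martingale $M_s=p(t-s,X_s,y)/p(t,x,y)$ is \emph{not} uniformly integrable under $\PP_x$ (indeed $M_s\to 0$ $\PP_x$-a.s., because $X_s\to X_t\neq y$ a.s.), so the bridge is singular with respect to $\PP_x$ on $\mathcal F_{t-}$, and Doob's theorems for $M$ say nothing about the existence of $\lim_{s\to t^-}X_s$ under the bridge law. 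The gap is fixable---for instance by a time-reversal argument exploiting the symmetry of $p$ (so that the behaviour near $t$ becomes the behaviour near $0$ for the reversed bridge started at $y$), or by proving a c\`adl\`ag modulus-of-continuity estimate near $t$, which essentially reproduces Billingsley's criterion locally---but as written the argument is incomplete at precisely the point where the $h$-transform loses its grip.
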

\begin{proof}
	By the Chapman-Kolmogorov equation for $p$, $\pi_{s_1,\dots,s_n}$ are consistent probability measures and so we can use the Kolmogorov existence theorem. We will verify the remaining conditions of \cite[Theorem 13.6]{MR1700749}, namely
	\begin{equation}\label{condition1}	
	\pi_{s_1,s,s_2}((z_1,z,z_2):|z_1-z|\wedge|z_2-z|\geq\lambda) \leq \frac{1}{\lambda^{\beta}}(F(s_2)-F(s_1))^\gamma, 0\leq s_1\leq s \leq s_2\leq t,
	\end{equation}
	with $F$ a suitable nondecreasing, continuous function on $[0,t]$, $\lambda >0$, $\beta\geq 0$, $\gamma>1$, and
	\begin{equation}\label{condition2}
	\lim\limits_{h\downarrow 0} \pi_{s,s+h}((z_1,z_2):|z_1-z_2|\geq\varepsilon) = 0, 0\leq s<t.
	\end{equation}
	To verify (\ref{condition1}) we write
	\begin{align*}
	&\pi_{s_1,s,s_2}((z_1,z,z_2):|z_1-z|\wedge|z_2-z|\geq\lambda)\\
	&= \int_{\Rd} \int_{B_\lambda(z_1)^c} \int_{B_\lambda(z)^c}  \frac{p(s_1,x,z_1) p(s-s_1,z_1,z) p(s_2-s,z,z_2) p(t-s_2,z_2,y)}{p(t,x,y)}\,dz_2\,dz\,dz_1\\
	&\leq C\int_{\Rd} \int_{B_\lambda(z_1)^c} \int_{B_\lambda(z)^c} p(s_1,x,z_1)\frac{s-s_1}{|z_1-z|^{d+\alpha}} \frac{s_2-s}{{|z_2-z|^{d+\alpha}}} p(t-s_2,z_2,y) \,dz_2\,dz\,dz_1\\
	&\leq  C\frac{1}{{\lambda^{2d+2\alpha}}}(s_2-s_1)^2.
	\end{align*}
	Thus the first condition holds with $\beta = {2d+2\alpha}$, $\gamma = 2$, $F(x) = \sqrt{C}x$.
	\newline
	To see that (\ref{condition2}) is also satisfied, for $0<s<s+h<t$ we use Lemma \ref{lemstochcont}, to get
	\begin{align*}
	\pi_{s,s+h}((z_1,z_2):|z_1-z_2|\geq\varepsilon) \leq \frac{Ch}{p(t,x,y)\varepsilon^{d+\alpha}} \to 0
	\end{align*}
	as $h \to 0$.
\end{proof}
\noindent	
\begin{corollary}
	Under $\PP_x^{t,y}$, $X$ is stochastically continuous on $[0,t]$.
\end{corollary}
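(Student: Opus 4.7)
The plan is to verify stochastic continuity pointwise, that is, to show that for every $s\in[0,t]$ and every $\varepsilon>0$, $\PP_x^{t,y}(|X_{s+h}-X_s|\geq\varepsilon)\to 0$ as $h\to 0$ (with one-sided limits at the endpoints). Since the finite-dimensional distributions of $X$ under $\PP_x^{t,y}$ are precisely the measures $\pi_{s_1,\dots,s_n}$ furnished by Lemma~\ref{lem:btxy}, everything reduces to estimates on $\pi_{s_1,s_2}$, which have already been packaged into Lemma~\ref{lemstochcont}.

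For interior points $0<s<t$, the two-dimensional marginal of $(X_s,X_{s+h})$ under $\PP_x^{t,y}$ is $\pi_{s,s+h}$, so the second estimate of Lemma~\ref{lemstochcont} gives, for all sufficiently small $h>0$,
\begin{align*}
\PP_x^{t,y}(|X_{s+h}-X_s|\geq\varepsilon)=\pi_{s,s+h}\bigl((z_1,z_2):|z_1-z_2|\geq\varepsilon\bigr)\leq \frac{Ch}{p(t,x,y)\varepsilon^{d+\alpha}},
\end{align*}
which vanishes as $h\downarrow 0$. The left-limit $h\uparrow 0$ is treated by the same estimate applied to the pair $(X_{s-|h|},X_s)$.

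The endpoints $s=0$ and $s=t$ require separate (but completely analogous) computations, because Lemma~\ref{lemstochcont} is stated only for $s_1,s_2\in(0,t)$ and the relevant marginals now involve the Dirac masses $\delta_x$ or $\delta_y$. At $s=0$, since $\pi_{0,h}=\delta_x\otimes\pi_h$, the event $\{|X_h-x|\geq\varepsilon\}$ has probability
\begin{align*}
\pi_h(\{z:|z-x|\geq\varepsilon\})=\int_{|z-x|\geq\varepsilon}\frac{p(h,x,z)\,p(t-h,z,y)}{p(t,x,y)}\,dz.
\end{align*}
Applying the upper bound from \eqref{pt:approx} in the form $p(h,x,z)\leq Ch/|z-x|^{d+\alpha}\leq Ch/\varepsilon^{d+\alpha}$ on the domain of integration and then using $\int_{\Rd} p(t-h,z,y)\,dz=1$ yields the bound $Ch/(p(t,x,y)\varepsilon^{d+\alpha})\to 0$. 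The case $s=t$ follows symmetrically from the first estimate of Lemma~\ref{lemstochcont}, since the marginal of $X_{t-h}$ under $\PP_x^{t,y}$ is $\pi_{t-h}$.

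I do not foresee a significant obstacle: all the analytic work is already contained in Lemma~\ref{lemstochcont} and the Gaussian-type bound \eqref{pt:approx}. The only care needed is to match the endpoint cases to the marginals involving the Dirac factors $\delta_x,\delta_y$, which is routine.
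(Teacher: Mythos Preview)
Your proof is correct and follows essentially the same approach as the paper: both use the estimates of Lemma~\ref{lemstochcont} (ultimately the heat-kernel bound \eqref{pt:approx}) to control $\PP_x^{t,y}(|X_s-X_r|\ge\varepsilon)$ by a constant times $|s-r|$. The only difference is that you treat the endpoint $s=0$ explicitly via a direct computation with the Dirac factor, whereas the paper folds $s=0$ into the case $s\in[0,t)$ and appeals to Lemma~\ref{lemstochcont} without further comment; your handling is slightly more careful here, since the lemma is literally stated only for $0<s_1<s_2<t$.
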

\begin{proof}
	First, we will prove the stochastic continuity on $[0,t)$. To this end fix $s\in[0,t)$, $\varepsilon>0$ and take $r\neq s$. Then by Lemma \ref{lemstochcont},
	$$
	\PP_x^{t,y}(|X_s-X_r|\geq\varepsilon)\leq \frac{C|s-r|}{p(t,x,y)\varepsilon^{d+\alpha}} \to 0
	$$
	as $r\to s$.
	To see stochastic continuity at $t$, by using Lemma \ref{lemstochcont}, for $s<t$ we get
	\begin{displaymath}
	\PP_x^{t,y}(|X_s-y|\geq\varepsilon)\leq \frac{C(t-s)}{p(t,x,y)\varepsilon^{d+\alpha}}\to 0\quad \mbox{ as $s\to t$.}\qedhere
	\end{displaymath}
	
\end{proof}
We denote $X_{0-} = X_0$.

\begin{corollary}\label{eqlim}
	For every $s\in [0,t]$, we have $\PP_x^{t,y}(X_{s-}=X_s)=1$.
\end{corollary}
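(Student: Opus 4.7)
The plan is to deduce the corollary by combining the stochastic continuity established in the previous corollary with the fact that, under $\PP_x^{t,y}$, the process $X$ admits càdlàg paths. The case $s=0$ is handled by the convention $X_{0-}=X_0$, so the real content concerns $s\in(0,t]$.

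First I would observe that the construction of $\PP_x^{t,y}$ in Lemma~\ref{lem:btxy} proceeds via \cite[Theorem 13.6]{MR1700749}, whose very purpose is to produce a version of the process with paths in the Skorokhod space, i.e., a càdlàg version. Thus, under $\PP_x^{t,y}$, the left limit $X_{s-}=\lim_{r\uparrow s}X_r$ exists pathwise for every $s\in(0,t]$. Pick any sequence $s_n\uparrow s$ with $s_n<s$. By the càdlàg property, $X_{s_n}\to X_{s-}$ almost surely, and in particular in probability.

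Second, by the previous corollary, $X$ is stochastically continuous on $[0,t]$ under $\PP_x^{t,y}$; applied at $s$ with the same sequence $s_n\uparrow s$, this gives $X_{s_n}\to X_s$ in probability. Since limits in probability are unique up to a null set, we conclude
\[
\PP_x^{t,y}(X_{s-}=X_s)=1.
\]

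The potential obstacle is only bookkeeping: making sure that Lemma~\ref{lem:btxy} indeed produces a càdlàg modification (and not merely a process with consistent finite-dimensional distributions). This is exactly what the cited theorem of Billingsley delivers under conditions \eqref{condition1}--\eqref{condition2}, which were verified in the proof of Lemma~\ref{lem:btxy}, so no extra work is needed. Once the càdlàg modification is in hand, the argument reduces to the one-line reconciliation of the a.s.\ left limit with the in-probability limit provided by stochastic continuity.
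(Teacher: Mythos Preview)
Your argument is correct and essentially coincides with the paper's: both take $s_n\uparrow s$, use the c\`adl\`ag property to get $X_{s_n}\to X_{s-}$ (a.s., hence in probability) and stochastic continuity to get $X_{s_n}\to X_s$ in probability, then conclude $X_{s-}=X_s$ a.s. The paper phrases the final step via the triangle inequality on $\PP_x^{t,y}(|X_{s-}-X_s|>1/k)$ rather than invoking uniqueness of limits in probability, but this is the same idea.
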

\begin{proof}
	There is nothing to prove for $s=0$, so let $s\in(0,t]$. We have
	$$
	1-\PP_x^{t,y}(X_{s-}=X_s) = \PP_x^{t,y}\left(\exists_{k>0} |X_{s-}-X_s|>\frac{1}{k}\right) \leq \sum_{k>0} \PP_x^{t,y}\left(|X_{s-}-X_s|>\frac{1}{k}\right),
	$$
	so it suffices to show that $\PP_x^{t,y} (|X_{s-}-X_s|>\frac{1}{k}) = 0$ for any $k>0$. To this end fix $0<s_n\uparrow s$. Since $X$ has $\cadlag$ trajectories and is stochastically continuous, 
	$$
	\PP_x^{t,y} \left(|X_{s-}-X_s|>\frac{1}{k}\right) \leq \PP_x^{t,y} \left(|X_{s-} - X_{s_n}|+|X_{s_n} - X_s|>\frac{1}{k}\right)
	$$
	$$
	\leq \PP_x^{t,y} \left(|X_{s-} - X_{s_n}|>\frac{1}{2k}\right) + \PP_x^{t,y} \left(|X_{s_n} - X_s|>\frac{1}{2k}\right) \to 0
	$$
	as $n\to\infty$, which finishes the proof.
\end{proof}
\noindent		
We define the time-reversed process $X'$ by $X'_s=X_{(t-s)-}$ for $s\in [0,t)$, $X'_t=X_0$. Then $X'$ also has $c\grave{a}dl\grave{a}g$ trajectories.
We are ready to prove the following result.
\begin{lemma}\label{symdistr}
The distribution of $X'$ under $\PP_y^{t,x}$ equals that of $X$ under 
$\PP_x^{t,y}$.
\end{lemma}
\begin{proof}
	It is enough to compare the finite-dimensional distributions of $X$ and $X'$, since they uniquely define their laws.
	To this end, fix measurable sets $A_1,\dots,A_n\subset \Rd$ and $0<s_1<\dots<s_n<t$. Using Corollary \ref{eqlim}, the symmetry of $p$ and the fact that $(t-s_{i-1})-(t-s_i)=s_i-s_{i-1}$, for $i=n,n-1, \dots, 2$, we get
	\begin{align*}
	&\PP_y^{t,x}(X'_{s_1}\in A_1, \dots, X'_{s_n}\in A_n) = \PP_y^{t,x}(X_{(t-s_1)-}\in A_1, \dots, X_{(t-s_n)-}\in A_n)\\
	&= \PP_y^{t,x}(X_{(t-s_1)-}\in A_1, \dots, X_{(t-s_n)-}\in A_n, X_{(t-s_1)-}=X_{t-s_1}, \dots , X_{(t-s_n)-}=X_{t-s_n})\\
	&= \PP_y^{t,x}(X_{t-s_1}\in A_1, \dots, X_{t-s_n}\in A_n, X_{(t-s_1)-}=X_{t-s_1}, \dots , X_{(t-s_n)-}=X_{t-s_n}) \\
	&= \PP_y^{t,x}(X_{t-s_1}\in A_1, \dots, X_{t-s_n}\in A_n)
	= \PP_y^{t,x}(X_{t-s_n}\in A_n, \dots, X_{t-s_1}\in A_1) \\
	&= \int_{A_1}\dots \int_{A_n} \frac{p(t-s_n,y,z_n)p(s_n-s_{n-1},z_n,z_{n-1})\dots p(s_2-s_1,z_2,z_1)p(s_1,z_1,x)} {p(t,y,x)}\, dz_n\dots dz_1\\
	&= \int_{A_n}\dots \int_{A_1} \frac{p(s_1,x,z_1)p(s_2-s_1,z_1,z_2)\dots p(s_n-s_{n-1},z_{n-1},z_n)p(t-s_n,z_n,y)} {p(t,x,y)}\,dz_1\dots dz_n\\
	&= \PP_x^{t,y}(X_{s_1}\in A_1, \dots, X_{s_n}\in A_n).
	\end{align*}
\end{proof}

\begin{proposition}\label{symbridge} For all $x,y\in\Rd,\, t>0$, we have $\PP_x^{t,y}(\sigma_D\geq t)=\PP_y^{t,x}(\sigma_D\geq t)$.
\end{proposition}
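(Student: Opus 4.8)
The plan is to reduce the statement to the time-reversal symmetry of the bridge, Lemma~\ref{symdistr}. The crucial observation is that $\{\sigma_D\ge t\}$ is precisely the event that the path produces \emph{no} segment meeting $D^c$ strictly before time $t$, and that such an event cannot distinguish between a path and its time reversal.

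First I would record that, by the definition of $\sigma_D$, one has $\sigma_D\ge t$ if and only if $[X_{s-},X_s]\cap D^c=\emptyset$ for every $s\in(0,t)$; let $A$ denote the corresponding subset of $D_t$ (the $\cadlag$ paths on $[0,t]$ that produce no such segment on $(0,t)$). Arguing as in the proof of Theorem~\ref{thm:stopping-time} --- the pair $s\mapsto(X_s,X_{s-})$ is progressively measurable and $F(D)$ is closed by Lemma~\ref{lem1} --- the set $A$ is measurable, and $\{\sigma_D\ge t\}=\{X\in A\}$ both under $\PP_x^{t,y}$ and under $\PP_y^{t,x}$. Next I would unwind the reversed process $X'_s=X_{(t-s)-}$. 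Since $X$ is $\cadlag$, right-continuity at any $a\in[0,t)$ gives $\lim_{r\searrow a}X_{r-}=X_a$, so for $s\in(0,t)$,
$$
X'_{s-}=\lim_{u\nearrow s}X_{(t-u)-}=\lim_{r\searrow t-s}X_{r-}=X_{t-s},\qquad X'_{s}=X_{(t-s)-}.
$$
Hence the segment $[X'_{s-},X'_{s}]=[X_{t-s},X_{(t-s)-}]$ traced by $X'$ at time $s$ coincides with the segment $[X_{(t-s)-},X_{t-s}]$ traced by $X$ at time $t-s$; since $s\mapsto t-s$ maps $(0,t)$ onto itself, $X$ produces a segment meeting $D^c$ on $(0,t)$ if and only if $X'$ does. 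Thus $\{X'\in A\}=\{X\in A\}$ \emph{pathwise} (recall that $X'$ is again $\cadlag$).

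Finally I would combine these two facts with Lemma~\ref{symdistr}:
$$
\PP_x^{t,y}(\sigma_D\ge t)=\PP_x^{t,y}(X\in A)=\PP_y^{t,x}(X'\in A)=\PP_y^{t,x}(X\in A)=\PP_y^{t,x}(\sigma_D\ge t),
$$
which is the claim. I expect the only subtle points to be the left-limit bookkeeping for $X'$ (the identity $X'_{s-}=X_{t-s}$) and the measurability of $A$; I note that nothing in the argument uses $x,y\in D$ (if, say, $x\notin\overline D$, then $A$ contains no path issued from $x$, so both probabilities are $0$, consistently with the displayed chain).
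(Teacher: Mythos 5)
Your proof is correct and follows essentially the same route as the paper: identify $\{\sigma_D\ge t\}$ with the path event that all segments $[X_{s-},X_s]$, $0<s<t$, lie in $D$, observe that time reversal permutes these segments (your computation $X'_{s-}=X_{t-s}$, $X'_s=X_{(t-s)-}$ is the bookkeeping the paper leaves implicit), and conclude by Lemma~\ref{symdistr}. The extra care you take with measurability of the event and with the left limits of $X'$ is welcome but not a departure from the paper's argument.
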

\begin{proof}
	Clearly, $\{\sigma_D\geq t\} = \{[X_{s-},X_s]\subset D, 0<s<t\}$.
	By Lemma \ref{symdistr},
	\begin{align*}
	\PP_x^{t,y}(\sigma_D(X)\geq t) &= \PP_x^{t,y}([X_{s-},X_s]\subset D, 0<s<t)
	= \PP_x^{t,y}([X'_{s},X'_{s-}]\subset D, 0<s<t)\\
	&= \PP_x^{t,y}(\sigma_D(X')\geq t)
	= \PP_y^{t,x}(\sigma_D(X)\geq t).\qquad\qedhere
	\end{align*}
\end{proof}
\begin{lemma}\label{Huntbridge}For all $x,y\in\Rd,\, t>0$, we have  $\phD(t,x,y)=\PP_x^{t,y}(\sigma_D\geq t)p(t,x,y)$.
\end{lemma}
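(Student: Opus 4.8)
The plan is to read the identity off the pointwise Hunt formula of Definition~\ref{def_hunt}. Since
\begin{equation*}
\phD(t,x,y)=p(t,x,y)-\E_x\!\left[\sigma_D<t;\,p(t-\sigma_D,X_{\sigma_D},y)\right]
\end{equation*}
and $\{\sigma_D\ge t\}=\{\sigma_D<t\}^{c}$, it suffices to prove
\begin{equation*}
\E_x\!\left[\sigma_D<t;\,p(t-\sigma_D,X_{\sigma_D},y)\right]=\PP_x^{t,y}(\sigma_D<t)\,p(t,x,y)
\end{equation*}
and subtract from $p(t,x,y)$; this way no separate discussion of $\{\sigma_D=t\}$ is needed. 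When $x\in D^{c}$ the claim is trivial: then $\sigma_D=0$ both $\PP_x$- and $\PP_x^{t,y}$-a.s.\ (in both cases $X_0=x$ a.s.\ and $[X_0,X_0]=\{x\}$ meets $D^{c}$), so the left-hand side is $\E_x\, p(t,X_0,y)=p(t,x,y)$ and the right-hand side is $1\cdot p(t,x,y)$. So we may assume $x\in D$.

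The first ingredient is a disintegration of $\PP_x$ through the bridge: for every $s\in(0,t)$ and every $E\in\sigma(X_r:0\le r<s)$,
\begin{equation*}
\PP_x^{t,y}(E)\,p(t,x,y)=\E_x\!\left[\1_E\,p(t-s,X_s,y)\right].
\end{equation*}
For a cylinder $E=\{X_{r_1}\in A_1,\dots,X_{r_m}\in A_m\}$ with $0\le r_1<\dots<r_m<s$ this follows by writing out the finite-dimensional distribution $\pi_{r_1,\dots,r_m}$ from Lemma~\ref{lem:btxy} and collapsing $\int_{\Rd}p(s-r_m,z,w)\,p(t-s,w,y)\,dw=p(t-r_m,z,y)$ by the Chapman--Kolmogorov equation for $p$; the monotone class theorem then extends it to all $E\in\sigma(X_r:r<s)$, and measurability of $y\mapsto\PP_x^{t,y}(E)$ is preserved along the way.

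The second ingredient is the dyadic stopping-time approximation of $\sigma_D$ already used in the proof of the Hunt-type formula above. Fix $0<u<t$ and put $E_{n,k}=\{(k-1)u2^{-n}\le\sigma_D<ku2^{-n}\}$ for $1\le k\le 2^{n}$. Since $\sigma_D$ is a stopping time (Theorem~\ref{thm:stopping-time}), $\{\sigma_D<a\}$ depends only on $(X_r)_{r<a}$ for every $a>0$, hence $E_{n,k}\in\sigma(X_r:r<ku2^{-n})$ with $ku2^{-n}\le u<t$. Applying the disintegration to each $E_{n,k}$ with $s=ku2^{-n}$ and summing over $k$ (the $E_{n,k}$ partition $\{\sigma_D<u\}$) gives, for every $n$,
\begin{equation*}
p(t,x,y)\,\PP_x^{t,y}(\sigma_D<u)=\E_x\!\left[\sigma_D<u;\,p(t-S_n,X_{S_n},y)\right],\qquad S_n:=\sum_{k=1}^{2^{n}}ku2^{-n}\,\1_{E_{n,k}}.
\end{equation*}
Letting $n\to\infty$: the left side does not depend on $n$, while on $\{\sigma_D<u\}$ one has $S_n\downarrow\sigma_D$ and $t-S_n\ge t-u>0$, so by right-continuity of $X$ and joint continuity of $p$ the integrand converges to $p(t-\sigma_D,X_{\sigma_D},y)$ and stays bounded by $c(t-u)^{-d/\alpha}$ from \eqref{pt:approx}; dominated convergence yields $p(t,x,y)\,\PP_x^{t,y}(\sigma_D<u)=\E_x[\sigma_D<u;\,p(t-\sigma_D,X_{\sigma_D},y)]$. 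Finally letting $u\uparrow t$, monotone convergence on the right and continuity from below of the probability measure $\PP_x^{t,y}$ on the left give the required identity.

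The delicate part is not any estimate but the bookkeeping: the naive route, identifying $\phD(t,x,\cdot)$ and $\PP_x^{t,\cdot}(\sigma_D\ge t)\,p(t,x,\cdot)$ as densities of the single subprobability measure $A\mapsto\PP_x(t<\sigma_D;\,X_t\in A)$, only delivers the identity for Lebesgue-a.e.\ $y$, whereas Definition~\ref{def_hunt} is pointwise. Working instead with the exact disintegration above and the stopping-time approximation keeps everything pointwise in $y$, at the price of checking that $\{\sigma_D<a\}$, hence $\{\sigma_D\ge t\}$ and the sets $E_{n,k}$, belong to the $\sigma$-algebra generated by $(X_r)_{r<s}$ for the relevant $s<t$, and of the (routine) domination in the $n\to\infty$ step.
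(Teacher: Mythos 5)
Your proposal is correct and follows essentially the same route as the paper: divide (or subtract) Hunt's formula and identify the correction term $\E_x[\sigma_D<t;\,p(t-\sigma_D,X_{\sigma_D},y)]$ with $\PP_x^{t,y}(\sigma_D<t)\,p(t,x,y)$. The paper's proof asserts this identification in one line as a standard property of the bridge, whereas you justify it in full via the disintegration over pre-$s$ events and the dyadic approximation of $\sigma_D$; your argument is a valid (and more rigorous) elaboration of the same step.
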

\begin{proof}
	Dividing both sides of Hunt formula by $p(t,x,y)$ we get
	\begin{displaymath}
	\frac{\phD(t,x,y)}{p(t,x,y)}= 1-\frac{\E_x\left[\sigma_D<t;p(t-\sigma_D,X_{\sigma_D},y)\right]}{p(t,x,y)}=1-\PP_x^{t,y}(\sigma_D<t)=\PP_x^{t,y}(\sigma_D\geq t).\;\qedhere
		\end{displaymath}
\end{proof}

\begin{corollary}\label{cor:comp}
$
0\leq \phD(t,x, y) \leq \pD(t,x, y)\leq p(t,x, y) 
$ for all $x,y\in \Rd$, $t>0$.
\end{corollary}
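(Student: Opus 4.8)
The plan is to obtain the whole chain $0\le\phD(t,x,y)\le\pD(t,x,y)\le p(t,x,y)$ at one stroke from the bridge representation of Lemma~\ref{Huntbridge}, its classical analogue for the killed process, and the pathwise bound $\sigma_D\le\tau_D$ recorded in the Introduction. The rightmost inequality is the easiest and is purely formal: from Hunt's formula \eqref{eq:Hfk} we have $\pD(t,x,y)=p(t,x,y)-\E_x[\tau_D<t;p(t-\tau_D,X(\tau_D),y)]$, and the subtracted expectation is of a nonnegative quantity, so $\pD\le p$. Nonnegativity of $\phD$ and the bound $\phD\le p$ are also immediate, now from Lemma~\ref{Huntbridge}, which gives $\phD(t,x,y)=\PP_x^{t,y}(\sigma_D\ge t)\,p(t,x,y)$ for all $x,y\in\Rd$, $t>0$: the first factor lies in $[0,1]$ and $p(t,x,y)>0$.

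It remains to prove the middle inequality $\phD\le\pD$. Here I would first record the exact analogue of Lemma~\ref{Huntbridge} for the killed process,
\[
\pD(t,x,y)=\PP_x^{t,y}(\tau_D\ge t)\,p(t,x,y),\qquad x,y\in\Rd,\ t>0,
\]
which is part of \cite[Theorem~2.4]{MR1329992}; alternatively it follows by repeating verbatim the proof of Lemma~\ref{Huntbridge} with $\tau_D$ in place of $\sigma_D$, i.e.\ dividing \eqref{eq:Hfk} by $p(t,x,y)$ and identifying the quotient $\E_x[\tau_D<t;p(t-\tau_D,X(\tau_D),y)]/p(t,x,y)$ with the bridge probability $\PP_x^{t,y}(\tau_D<t)$ via the same disintegration of $\PP_x$ with respect to $X_t$ that was used there. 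Given this, since $\sigma_D\le\tau_D$ holds on every path we have $\{\sigma_D\ge t\}\subseteq\{\tau_D\ge t\}$, hence $\PP_x^{t,y}(\sigma_D\ge t)\le\PP_x^{t,y}(\tau_D\ge t)$; multiplying by $p(t,x,y)\ge0$ and invoking the two bridge identities yields $\phD(t,x,y)\le\pD(t,x,y)$ for all $x,y\in\Rd$, $t>0$, which closes the chain.

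A more hands-on alternative for $\phD\le\pD$, which avoids the $\tau_D$-bridge identity, is to combine the Hunt-type formula of Section~\ref{sec:Hunt} with \eqref{eq:stdkp}: for every Borel $A\subset\Rd$,
\[
\int_A\phD(t,x,y)\,dy=\PP_x(t<\sigma_D;X_t\in A)\le\PP_x(t<\tau_D;X_t\in A)=\int_A\pD(t,x,y)\,dy,
\]
again because $\{t<\sigma_D\}\subseteq\{t<\tau_D\}$; as $A$ is arbitrary this gives $\phD(t,x,y)\le\pD(t,x,y)$ for Lebesgue-a.e.\ $y$, and one upgrades to all $y\in D$ using the continuity of $y\mapsto\phD(t,x,y)$ on $D$ established just before the Chapman--Kolmogorov equation together with the continuity of $y\mapsto\pD(t,x,y)$, the cases $x\in D^c$ or $y\in D^c$ being degenerate. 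I do not expect any genuine obstacle here: the only non-formal ingredient is the bridge disintegration, and that is exactly the identity already used in the proof of Lemma~\ref{Huntbridge}, so nothing new is required.
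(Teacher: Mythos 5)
Your main argument is exactly the paper's proof: the paper also deduces the chain from Lemma~\ref{Huntbridge}, its analogue for $\pD$ and $\tau_D$, and the pathwise inequality $\sigma_D\le\tau_D$. The proposal is correct; the alternative a.e.-plus-continuity route is unnecessary but sound.
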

\begin{proof}
Since $\sigma_D\le \tau_D$, Lemma~\ref{Huntbridge} and its analogue for $\pD$ and $\tau_D$ yield the result. 
\end{proof}
\begin{proof}[Proof of Theorem~\ref{thm:symmetry}]
	Use Proposition \ref{symbridge}, Lemma \ref{Huntbridge} and symmetry of $p(t,\cdot,\cdot)$.
\end{proof}

\subsection{Incomparability of the heat kernels $\phD$ and $\pD$}\label{subsec:comp}

For the remainder of the paper, we assume that 
$D$ is nonempty open bounded $C^{1,1}$ set. 

Recall that $\phD\le \pD$. Sharp estimates of the Dirichlet heat kernel $\pD$ 
are well known \cite{MR2677618}, see also \cite{MR2722789}.
{If $D$ is convex, then $\phD=\pD$. If $D$ is nonconvex, then the kernels are  \textit{not comparable}.
Here is an intuitive explanation: The \textit{off-diagonal}
estimate for \( p_D(t, x, y) \) and (small) time $t>0$ is \( t |x - y|^{-d - \alpha} \), which we interpret by saying that  the main contribution to $\pD(t,x,y)$  comes from a single jump from near \( x \) to near \( y \). If the line segment \([x, y]\) is not contained in \( D \) then such jumps are forbidden so the shot-down process has to make
at least two (independent) large jumps to get from near \( x \) to near \( y \) up to time $t$, therefore $\phD(t,x,y)$ is much smaller than $\pD(t,x,y)$.}
\begin{theorem}\label{thm:inc}
	$\phD$ and $\pD$
	are comparable if and only if $D$ is convex.
\end{theorem}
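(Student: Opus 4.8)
The plan is to prove the two implications separately. For the easy direction, suppose $D$ is convex. Then $\sigma_D = \tau_D$ by the elementary observations in the Introduction, so the shot-down process coincides with the killed process and $\phD = \pD$ identically; this already gives comparability (with constant $1$). Hence the content of the theorem is the contrapositive of the converse: if $D$ is not convex, then $\phD$ is \emph{not} comparable to $\pD$, i.e., there is no constant $c$ with $c^{-1}\pD(t,x,y)\le \phD(t,x,y)$ for all $t>0$ and $x,y\in D$. (The inequality $\phD\le\pD$ always holds by Corollary~\ref{cor:comp}, so only the lower bound can fail.)

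For the hard direction I would exploit nonconvexity geometrically. If $D$ is open, bounded, $C^{1,1}$ and not convex, one can find two points $x,y\in D$ whose connecting segment $[x,y]$ exits $D$, i.e., $y\notin D_x$; by openness and $C^{1,1}$-regularity this can be arranged so that in fact $[x,y]$ crosses a "bulge" of $D^c$ transversally, with a whole neighbourhood of the segment's exit/entry pattern robust under small perturbations of the endpoints. The key idea is to compare, as $t\downarrow 0$, the short-time asymptotics of the two kernels at such a pair. On one hand, by the sharp Dirichlet heat kernel estimates of \cite{MR2677618} (valid since $D$ is $C^{1,1}$), for $x,y$ fixed in the interior and $t$ small we have $\pD(t,x,y)\approx t/|x-y|^{d+\alpha}$, i.e., a single ``direct jump'' from $x$ to $y$ already contributes at the full jump-intensity rate. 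On the other hand, using the bridge representation $\phD(t,x,y) = \PP_x^{t,y}(\sigma_D\ge t)\,p(t,x,y)$ from Lemma~\ref{Huntbridge} (together with $p(t,x,y)\approx t/|x-y|^{d+\alpha}$), it suffices to show $\PP_x^{t,y}(\sigma_D\ge t)\to 0$ as $t\downarrow 0$. This probability is the mass of $t$-bridges from $x$ to $y$ that never fly over $D^c$; since $[x,y]$ itself lies partly outside $D$, any such bridge must make a "detour" around the obstacle, which forces at least two macroscopic jumps (or a jump plus substantial displacement) rather than one, and the bridge measure penalises this. Quantitatively, I would bound $\PP_x^{t,y}(\sigma_D\ge t)$ above by the probability, under the bridge, that $X$ visits a suitable intermediate region $R\subset D$ ``on the far side'' of the obstacle (necessary to go around it while staying visible), and estimate that probability by inserting one extra time point $s\in(0,t)$ and using $\pi_s$-type bounds as in Lemma~\ref{lemstochcont}: heuristically, $\PP_x^{t,y}(X_s\in R)\lesssim \dfrac{p(s,x,R)\,p(t-s,R,y)}{p(t,x,y)}\lesssim \dfrac{(s/\rho^{d+\alpha})(( t-s)/\rho'^{\,d+\alpha})}{t/|x-y|^{d+\alpha}}\lesssim t$, where $\rho,\rho'>0$ are fixed distances determined by the geometry. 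Optimising over the decomposition yields $\PP_x^{t,y}(\sigma_D\ge t) = O(t)$, hence $\phD(t,x,y)=O(t^2/|x-y|^{d+\alpha})$, which is negligible compared with $\pD(t,x,y)\approx t/|x-y|^{d+\alpha}$; so no uniform comparability constant can exist.

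The main obstacle will be making the geometric/combinatorial step rigorous: precisely identifying, for a concrete nonconvex $C^{1,1}$ domain, a pair $(x,y)$ and an ``intermediate obstruction region'' $R$ such that \emph{every} path from $x$ to $y$ with $[X_{s-},X_s]\subset D$ for all $s$ is forced to pass through $R$, and that the distances from $R$ to $x$ and to $y$ are bounded below. For the annulus of Figure~\ref{fig:gull} this is transparent (take $x,y$ nearly antipodal on a middle circle; any visible path must swing around one side of the hole, passing through one of two well-separated ``passages''), and a localization argument using the $C^{1,1}$ structure reduces the general nonconvex case to such a model situation. A secondary technical point is passing from the fixed-time bridge bound to the event $\{\sigma_D\ge t\}$ and handling the left limits $X_{s-}$ in the definition of $\sigma_D$; but Corollary~\ref{eqlim} (bridge has no fixed-time jumps) and the $\cadlag$ regularity of the bridge, already established in Section~\ref{sec:Hunt}, take care of that.
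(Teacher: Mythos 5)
Your easy direction and your overall target are right: for nonconvex $D$ one picks a pair $x,y\in D$ with $[x,y]\cap D^c\neq\emptyset$ and shows $\phD(t,x,y)=o(t)$, while $\pD(t,x,y)\approx t|x-y|^{-d-\alpha}$ for small $t$. The gap is in the mechanism you propose for the upper bound. You reduce matters to showing that the bridge probability $\PP_x^{t,y}(\sigma_D\ge t)$ is small, and you want to dominate it by the probability that the bridge \emph{ever} visits an intermediate region $R$; but the estimate you then carry out, namely $\PP_x^{t,y}(X_s\in R)\lesssim t$ for a \emph{fixed} $s\in(0,t)$ via the finite-dimensional densities as in Lemma~\ref{lemstochcont}, does not control $\PP_x^{t,y}\bigl(\exists\, s\in(0,t): X_s\in R\bigr)$. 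Integrating the fixed-time bound over $s$ only yields the expected occupation time of $R$, and taking a supremum over $s$ yields nothing; to convert a visit at a \emph{random} time into a fixed-time estimate you would need a strong Markov property (or a hitting-time decomposition) for the bridge measure $\PP_x^{t,y}$, which is not established in the paper and is not routine, since the bridge is a time-inhomogeneous process defined only through its finite-dimensional distributions. As written, ``optimising over the decomposition'' does not close this step, and the random-time issue cannot be sidestepped, because a surviving path is only forced to visit your region $R$ at some unspecified time, not at a prescribed one.

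The paper's proof avoids the bridge at exactly this point. It first arranges, using the $C^{1,1}$ exterior ball when needed, two balls $B=B(x,r)$ and $B(y,r)$ in $D$ such that every segment joining them meets the interior of $D^c$. Then the perturbation formula of Lemma~\ref{lem:Hffssd} with $U=B$, under the ordinary law $\PP_x$, gives $\phD(t,x,y)=\E_x\bigl[\tau_B<t,\ [X_{\tau_B-},X_{\tau_B}]\subset D;\ \phD(t-\tau_B,X_{\tau_B},y)\bigr]$ (the term $p_B(t,x,y)$ vanishes since $y\notin B$); on the surviving event the geometry forces $|X_{\tau_B}-y|\ge r$, whence $\phD(t,x,y)\le \PP_x(\tau_B<t)\,\sup_{s<t,\,|z|>r}p_s(z)\le ct\,\PP_x(\tau_B<t)=o(t)$. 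This replaces your random-time bridge argument by a single application of the strong Markov property at $\tau_B$, which is available for $X$ under $\PP_x$. To salvage your route you would either have to prove a strong Markov property for $\PP_x^{t,y}$ or recast the detour argument under $\PP_x$, which is in effect what the paper does.
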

\begin{proof}
	If $D$ is convex, then $\tau_D=\sigma_D$ and $\pD=\phD$, so there is nothing to prove.
	If $D$ is nonconvex, then we can find $x,y\in D$ and $Q\in D^c$ such that $Q\in [x,y]$, i.e., $Q=\lambda x+(1-\lambda)y$ for some $0\le \lambda\le 1$.
	If ${\rm dist}(Q,D)>0$, then $r=:{\rm dist}(Q,D)\wedge {\rm dist}(x,D^c)\wedge {\rm dist}(y,D^c)>0$, and
	we consider 
	the balls $B(x,r), B(y,r)\subset D$. For all the points $z\in B(x,r)$, $w\in B(y,r)$,  we have $\lambda z+(1-\lambda)w\in B(Q,r)$.  The latter is needed below---but before we proceed, we also consider the case of $Q\in \partial D$. 
	In this case we translate $Q$ slightly in the direction of the center of the exterior ball tangent to $\partial D$ at $Q$ 
	(see Definition~\ref{def:C11}), and we translate $x$ and $y$ by the same vector, which moves us to the first case.  
	To summarize---for nonconvex $C^{1,1}$ set $D$ there are balls $B(x,r), B(y,r)\subset D$ such that 
	for all $z\in B(x,r)$ and $w\in B(y,r)$, the line segment $[z,w]$ intersects the interior of $D^c$, in particular ${\rm dist}(B(x,r), B(y,r))>0$,  see {Figure}~\ref{fig:balls}.
	\begin{figure}
		\begin{center}
			\begin{tikzpicture}[scale=2]
			\draw [shift={(-11.95,8.4)},line width=1.2pt]  plot[domain=2.831889709047336:4.007894916142469,variable=\t]({1*1.3124404748406695*cos(\t r)+0*1.3124404748406695*sin(\t r)},{0*1.3124404748406695*cos(\t r)+1*1.3124404748406695*sin(\t r)});
			\draw [shift={(-12.05,8.4)},line width=1.2pt]  plot[domain=-0.8663022625526775:0.3097029445424578,variable=\t]({1*1.3124404748406684*cos(\t r)+0*1.3124404748406684*sin(\t r)},{0*1.3124404748406684*cos(\t r)+1*1.3124404748406684*sin(\t r)});
			\draw [shift={(-12.65,8.65)},line width=1.2pt]  plot[domain=0.6610431688506853:2.8753406044388665,variable=\t]({1*0.570087712549568*cos(\t r)+0*0.570087712549568*sin(\t r)},{0*0.570087712549568*cos(\t r)+1*0.570087712549568*sin(\t r)});
			\draw [shift={(-11.35,8.65)},line width=1.2pt]  plot[domain=0.2662520491509295:2.4805494847391056,variable=\t]({1*0.5700877125495686*cos(\t r)+0*0.5700877125495686*sin(\t r)},{0*0.5700877125495686*cos(\t r)+1*0.5700877125495686*sin(\t r)});
			\draw [shift={(-12.721132168535297,8.68577126967102)},line width=1.2pt]  plot[domain=-0.14821767022916887:0.5426028080631197,variable=\t]({1*0.6085379462666762*cos(\t r)+0*0.6085379462666762*sin(\t r)},{0*0.6085379462666762*cos(\t r)+1*0.6085379462666762*sin(\t r)});
			\draw [shift={(-11.291769672302514,8.687848474210886)},line width=1.2pt]  plot[domain=2.590802412473704:3.2892961130490246,variable=\t]({1*0.5964366194692994*cos(\t r)+0*0.5964366194692994*sin(\t r)},{0*0.5964366194692994*cos(\t r)+1*0.5964366194692994*sin(\t r)});
			\draw [shift={(-12,8.508196964730805)},line width=1.2pt]  plot[domain=4.087120120443383:5.337657840325996,variable=\t]({1*1.3667847352961513*cos(\t r)+0*1.3667847352961513*sin(\t r)},{0*1.3667847352961513*cos(\t r)+1*1.3667847352961513*sin(\t r)});
			\draw [line width=1.2pt] (-12.39996838597594,8.805243425377359) circle (0.102cm);
			\draw [line width=1.2pt] (-11.606001370593122,8.799405432617192) circle (0.102cm);
			\draw [shift={(-12.000002626550868,8.569245996038335)},line width=1.2pt]  plot[domain=-3.3653023323877065:0.25493143478218927,variable=\t]({1*0.12224130597605011*cos(\t r)+0*0.12224130597605011*sin(\t r)},{0*0.12224130597605011*cos(\t r)+1*0.12224130597605011*sin(\t r)});
			\begin{scriptsize}
			\draw [fill=black] (-12.39996838597594,8.805243425377359) circle (0.5pt);
			\draw[color=black] (-12.412209534460503,8.45619913218483) node {$B(x, r)$};
			\draw [fill=black] (-11.606001370593122,8.799405432617192) circle (0.5pt);
			\draw[color=black] (-11.560739392977234,8.453300886075097) node {$B(y, r)$};
			\end{scriptsize}
			\end{tikzpicture}
		\end{center}
		\caption{Situation in the proof of Theorem~\ref{thm:inc}}
      \label{fig:balls}
	\end{figure}
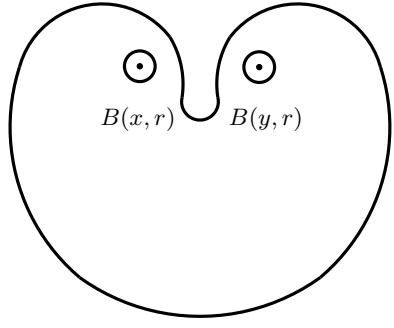
	\noindent
	Denote $B=B(x,r)$. By Lemma~\ref{lem:Hffssd} and {convexity of $B$},
	$$\phD(t,x, y) = \E_{x} [\tau_B<t, [X_{\tau_B-},X_{\tau_B}]\subset D; \phD(t-\tau_B,X_{\tau_B}, y)] + p_B(t,x, y).$$
	\noindent
	The second term above equals $0$, because $y \notin B$. The condition $[X_{\tau_B-}, X_{\tau_B}] \subset D$ implies that $X_{\tau_B} \notin B(y,r)$, i.e., $|X_{\tau_B} - y| \geq r$. Thus,  
	$$\phD(t,x, y) 
	\leq \PP_{x}(\tau_B<t) 
	\sup_{\substack{s<t \\ |z|> r}}p_s(z).$$
	If $0<s<t \leq 1$ and  $|z|> r$, then
	$$p_s(z)\approx s^{-d/\alpha} \land \frac{s}{|z|^{d+\alpha}} \leq cs \leq ct.$$
	Since $\PP_{x}(\tau_B<t)$ converges to 0 as $t \to 0$, we get
	$\phD(t,x, y) 
	= o(t)$.
	But for small $t>0$ we have $\pD(t,x,y) \approx t|x-y|^{-d-\alpha}$ \cite[Theorem~1.1]{MR2677618}.
\end{proof}

\section{The killing measures}\label{sec:lk_bound} 
Recall the killing intensities $\kappa_D$ and $\iota_D$, defined in the Introduction
\begin{definition}
The killing intensity for $D$ is $\kappa_D(x) = \int\limits_{D^c} \nu(y-x)\,dy, \enspace x \in D.$
\end{definition}

\noindent Given $x \in D$, recall the definition $D_x = \{y \in D : [x,y]\subset D \}$; we regard $D_x$ as the set of points that are ``visible in $D$ from $x$''---the set contains precisely those points where the process can jump from $x$ without being shot down. 
\begin{definition}
The shooting-down intensity for $D$ is $\iota_D(x) = \int\limits_{D_x^c} \nu(y-x)\,dy, \enspace x\in D.$
\end{definition}

Clearly, $\iota_D\ge \kappa_D>0$ and $\iota_D, \kappa_D$ are continuous on $D$. Let 
$$\delta_D(x) = {\rm dist}(x,D^c), \quad x\in \Rd.$$
We will estimate the difference $\iota_D- \kappa_D$ for $C^{1,1}$ open sets $D$ in terms of $\delta_D$.

Let $x \in D$, an open set in $\Rd$ that is $C^{1,1}$ at scale $r>0$. 
Let $Q \in \partial D$ be such that $\delta_D(x) = |x-Q|$. Let ${I(x)} = B(x', r)$ and ${O(x)} = B(x'', r)$ be the inner and outer balls tangent at $Q$, see Figure~\ref{fig:balls2}.
We have
$$
\kappa_D(x)\le \int \limits_{|y|>\delta_D(x)} \mathcal{A}_{{d,\alpha}}|y|^{-d-\alpha} dy=c \delta_D(x)^{-\alpha},
$$
and
$$
\kappa_D(x)\ge \int \limits_{{O(x)}} \mathcal{A}_{{d,\alpha}}|y|^{-d-\alpha}dy\ge c \delta_D(x)^{-\alpha} \quad \text{ if } \delta_D(x)\le r.
$$
Since $D^c\subset D_x^c  $, we have $\kappa_D\le \iota_D$, as noted before. 
Furthermore, $$D_x^c \setminus D^c \subset {I(x)}^c \setminus D^c = D \setminus {I(x)} \subset {O(x)}^c \setminus {I(x)} =\Rd \setminus ({O(x)} \cup {I(x)}),$$ hence
\begin{equation}\label{eq:lkbou}
    0 \leq \iota_D(x) - \kappa_D(x) =\!\!\! \int\limits_{D_x^c \setminus D^c} \!\!\!\nu(y-x)\,dy \leq\!\!\!\!\!\! \int\limits_{\Rd \setminus ({O(x)} \cup {I(x)})} \!\!\!\!\!\!\!\!\! \nu(y-x)\,dy.
\end{equation} 
\begin{theorem}\label{thm:intens-diff-bd}
\label{dif_bound}
If $D$ is a bounded $C^{1,1}$ open set, then
\begin{equation}\label{eq:lkboubs}
    0 \leq \iota_D(x) - \kappa_D(x) = o(\kappa_D(x))
 \quad \text{ as }    \delta_D(x)\to 0.
\end{equation} 
\end{theorem}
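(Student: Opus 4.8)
The strategy is to bound the right-hand side of \eqref{eq:lkbou}, namely $\int_{\Rd\setminus(O_x\cup I_x)}\nu(y-x)\,dy$, and show that it is $o(\delta_D(x)^{-\alpha})$ as $\delta_D(x)\to 0$, since $\kappa_D(x)\approx \delta_D(x)^{-\alpha}$ for small $\delta_D(x)$ by the elementary estimates already recorded above the statement. First I would set $\rho=\delta_D(x)$ and recall the geometry: $Q\in\partial D$ is the nearest boundary point, $I_x=B(x',r)$ is the inner tangent ball (so $x\in[x',Q]$ and $|x-x'|=r-\rho$), and $O_x=B(x'',r)$ is the outer tangent ball on the other side of $Q$. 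The set $\Rd\setminus(O_x\cup I_x)$ avoids a neighborhood of $Q$ on both sides of the tangent hyperplane at $Q$; I want to show the mass of $\nu(\cdot-x)$ there is small compared to $\rho^{-\alpha}$.

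The key computation is to split $\Rd\setminus(O_x\cup I_x)$ into the part close to $x$ (say within distance $R$ of $x$, for a parameter $r\ge R\to\infty$ in units of $\rho$ — more precisely one should rescale) and the far part. For the far part, $\int_{|y-x|>R}\nu(y-x)\,dy = cR^{-\alpha}$, which is $o(\rho^{-\alpha})$ once $R/\rho\to\infty$. For the near part, the point is that within the ball $B(x,R)$ the region outside both tangent balls is contained in a thin lens-shaped neighborhood of the tangent hyperplane $H$ at $Q$: a point $y$ with $|y-x|\le R$ that lies outside $I_x$ and outside $O_x$ must be within distance $O(R^2/r)$ of $H$ (this is the standard quadratic estimate for how far a ball of radius $r$ departs from its tangent plane). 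Since $x$ is at distance $\rho$ from $H$, integrating $\nu(y-x)=\mathcal A|y-x|^{-d-\alpha}$ over such a slab of half-width $\sim R^2/r$ at distance $\ge$ (something like $\rho$ on one side) gives a bound of order $(R^2/r)\cdot\rho^{-1-\alpha}$ up to constants, plus a contribution from the annular region at distances between $\rho$ and $R$ which again is controlled by the slab thickness. Choosing $R=R(\rho)$ with $\rho\ll R\ll \sqrt{r\rho}$ (e.g. $R=\sqrt[4]{r}\,\rho^{3/4}$, legitimate for $\rho$ small since $r$ is fixed) makes both the far term $cR^{-\alpha}=o(\rho^{-\alpha})$ and the near term $o(\rho^{-\alpha})$, giving $\iota_D(x)-\kappa_D(x)=o(\rho^{-\alpha})=o(\kappa_D(x))$.

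I expect the main obstacle to be making the "thin slab" containment precise and uniform: one must verify that for $y$ with $|y-x|\le R$, being outside $I_x\cup O_x$ indeed forces $\operatorname{dist}(y,H)\lesssim R^2/r$, carefully handling the two sides of $H$ (the $I_x$-side and the $O_x$-side) and the fact that $x$ itself is not on $H$ but at height $\rho$. A clean way is to work in coordinates with $Q$ at the origin, $H=\{y_d=0\}$, $x=(0,\dots,0,\rho)$, $x'=(0,\dots,0,r)$, $x''=(0,\dots,0,-(r-\text{something}))$; then $y\notin I_x$ means $|y-x'|\ge r$, i.e. $|y'|^2 + (y_d-r)^2\ge r^2$, which for $|y|\le R\le r$ forces $y_d\le |y'|^2/(2r)\le R^2/(2r)$, and symmetrically $y\notin O_x$ forces $y_d\ge -R^2/(2r)$ (adjusting constants for the exact position of $O_x$). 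Then the integral over the slab $\{|y'|\le R,\ |y_d|\le R^2/r\}$ of $|y-x|^{-d-\alpha}$, remembering $|y-x|\ge \rho$ everywhere and $|y-x|\gtrsim |y'|$ when $|y'|\gtrsim\rho$, is a routine polar-coordinate estimate yielding the claimed $o(\rho^{-\alpha})$. Boundedness of $D$ enters only to ensure $D^c$ has positive density far away so that the elementary two-sided bound $\kappa_D(x)\approx\delta_D(x)^{-\alpha}$ holds; it plays no further role.
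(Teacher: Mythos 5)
Your proposal is correct, and it proves the stated $o(\kappa_D(x))$ bound by a route that differs in execution from the paper's. Both arguments start from \eqref{eq:lkbou} and both exploit the same quadratic tangency fact (a point outside both tangent balls satisfies $|y|^2\ge 2|y_d|r$ in coordinates centered at $Q$), but you then make a near/far decomposition at a sliding radius $R=R(\rho)$ with $\rho\ll R\ll\sqrt{r\rho}$: the far part contributes $cR^{-\alpha}=o(\rho^{-\alpha})$, and the near part is trapped in a slab of half-width $O(R^2/r)$ about the tangent hyperplane, contributing $O((R^2/r)\rho^{-1-\alpha})=o(\rho^{-\alpha})$; your slab containment and the subsequent polar estimate both check out, and the argument is uniform in the dimension. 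The paper instead integrates over all of $\Rd\setminus(O_x\cup I_x)$ at once, in explicit polar (for $d=2$) or cylindrical (for $d\ge 3$) coordinates, using the global lower bound $|x-y|^2\gtrsim |y|^2+\delta_D(x)^2$. What the paper's heavier computation buys is the sharp quantitative rate \eqref{eq:or}--\eqref{eq:or2}, namely $\iota_D-\kappa_D\lesssim\delta_D^{1-\alpha}$ for $\alpha>1$, $\log(e+1/\delta_D)$ for $\alpha=1$, and $O(1)$ for $\alpha<1$; these precise bounds (not just the $o$-statement) are invoked later in the proof of Theorem~\ref{thm:Dirichlet-form}, so your softer argument, while fully adequate for the theorem as stated (even after optimizing $R$ it only gives a rate like $\delta_D^{\alpha/(2+\alpha)}\kappa_D$), would not by itself replace the paper's proof in the larger scheme. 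One cosmetic quibble: the two-sided bound $\kappa_D(x)\approx\delta_D(x)^{-\alpha}$ near the boundary comes from the inner and outer tangent balls, i.e.\ from the $C^{1,1}$ condition, not from boundedness of $D$ as you suggest at the end.
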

\begin{proof}
Let {$D$ be $C^{1,1}$ at a scale} $r>0$ 
{and} ${\rm dist}(x, \partial D) < r/4$. Let ${I(x)}$ and ${O(x)}$ be as in \eqref{eq:lkbou}.  
Clearly, $x \in {I(x)} = B(x', r)$. 

\smallskip

$\mathbf{Case \enspace 1: d=1}$. The set $D$ is a sum of open intervals of length at least $2r$ each and at a distance at least $2r$ from each other, therefore 

$$\iota_D(x) - \kappa_D(x) \leq 2\int_{2r}^{\infty} \mathcal{A}_{1,\alpha} s^{-1-\alpha}ds
 = cr^{-\alpha},$$
whereas $\kappa_D(x)\geq c\delta_D(x)^{-\alpha}$. This completes the proof of \eqref{eq:lkboubs} for $d=1$.

\smallskip

$\mathbf{Case \enspace 2: d=2}$. By possibly changing coordinates, we may assume that $Q=0 \in \partial D$, $x=(0, x_2)$, $x'= (0, r)$, and $x''= (0, -r)$. By our assumptions, we have: $0 < x_2 < r/4$.

 \begin{figure}[H]
   \begin{center}
     \begin{tikzpicture}[rotate=-90,scale=4.5]
       \draw[line width=1pt] (-0.3, 0) circle (0.3) node [above=20] {${I(x)}$};
       \draw[line width=1pt] (0.3, 0) circle (0.3) node [below=20] {${O(x)}$};
       \draw [line width=1pt] plot [smooth] coordinates {(-0.1, -1.5) (-0.15, -1.2) (0.08,-0.7) (0,0) (0.1,0.9) (-0.2, 1.4)} node [above left=20pt] {$D$};
       \fill (-0.3,0) circle (0.015) node [above right=-2pt] {$x'$};
       \fill (0.3,0) circle (0.015) node [above right=-2pt] {$x''$};
       \fill (-0.07,0) circle (0.015) node  [above right=-2pt]{\small $x$};
     \end{tikzpicture}
   \end{center}
   \caption{The point $x\in D$ and the balls ${I(x)}\subset D$ and ${O(x)}\subset D^c$}
   \label{fig:balls2}
 \end{figure}
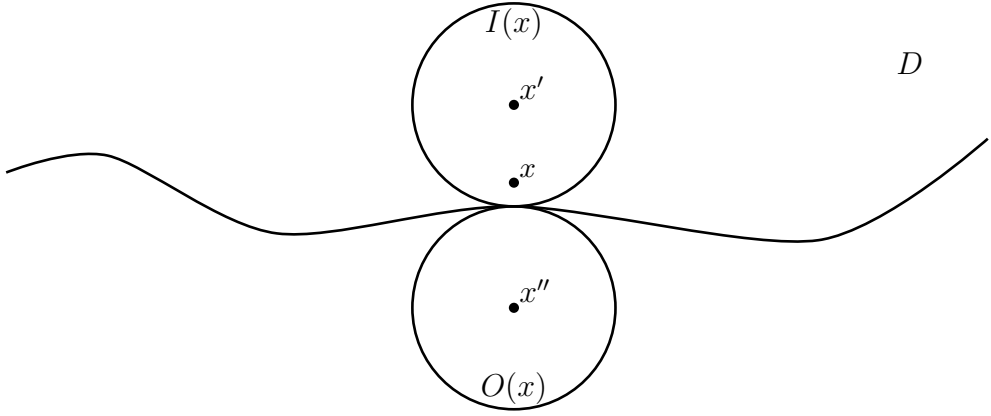
\noindent From \eqref{eq:lkbou}, we have
$$\iota_D(x) - \kappa_D(x) \leq \!\!\!\!\!\! \int\limits_{\mathbb{R}^2 \setminus  ({I(x)} \cup {O(x)})} \!\!\!\!\!\!   \nu(y-x)\,dy,$$
and it is clear that
$$\int\limits_{\mathbb{R} \times \mathbb{R}_- \setminus  {O(x)}}\!\!\!\!\!\!   \nu(y-x)\,dy \leq  \!\!\!\!\!\!\int\limits_{\mathbb{R} \times \mathbb{R}_+ \setminus  {I(x)}}\!\!\!\!\!\!   \nu(y-x)\,dy.$$
Thus,
$$\iota_D(x) - \kappa_D(x) \leq 2 \!\!\!\!\!\! \int\limits_{\mathbb{R} \times \mathbb{R}_+ \setminus  {I(x)}} \!\!\!\!\!\! \mathcal{A}_{2,\alpha} \nu(y-x)\,dy. $$
Considering $y=(y_1,y_2)$ in $\mathbb{R} \times \mathbb{R}_+ \setminus {I(x)}$ {and using polar coordinates}, yields 
$$|y-x|^2 = y_1^2 + (x_2-y_2)^2 = (\rho \cos \phi)^2 + (x_2 - \rho \sin \phi)^2 = \rho ^2 + x_2^2 - 2\rho x_2 \sin \phi .$$ 
In $\mathbb{R} \times \mathbb{R}_+ \setminus {I(x)}$ we have $2r\sin \phi\le \rho$, and since $x_2 \leq \frac{r}{4}$, we get $2 \rho x_2 \sin \phi \leq \frac{\rho^2}{4}$, so $|y-x|^2 \geq \frac{\rho ^2}{2} + x_2^2$.
Therefore,
\vspace*{-3mm}
\begin{align*}
\iota_D(x) - \kappa_D(x) &
\le 2 \!\!\!\!\!\! \int\limits_{\mathbb{R} \times \mathbb{R}_+ \setminus  {I(x)}} \!\!\!\!\!\! \mathcal{A}_{2,\alpha} |y-x|^{-2-\alpha}dy \leq 2 \int_{0}^{\pi}\!\!\! \int_{2r\sin\phi}^{\infty} \!\!\!\!\!\! \mathcal{A}_{2,\alpha}\left(\frac{1}{2}\rho^2 + x_2^2\right)^{\frac{-2-\alpha}{2}}\rho\,d\rho\,d\phi\\ 
& = 2\int_{0}^{\pi} \mathcal{A}_{2,\alpha}\frac{2}{\alpha}\left(\frac{1}{2}\cdot 4r^{2}\sin\phi^2 + x_2^2\right)^{-\frac{\alpha}{2}}d\phi \\
 &\leq\frac{8}{\alpha}\mathcal{A}_{2,\alpha}  \int_{0}^{\pi/2} \left(r^{2}\sin\phi^2 + x_2^2\right)^{-\frac{\alpha}{2}}d\phi \leq c\int_{0}^{1} \left(\phi^2 + x_2^2\right)^{-\frac{\alpha}{2}}d\phi\\
 &\leq c\int_{0}^{x_2} x_2^{-\alpha}d\phi + c\int_{x_2}^{1} \phi^{-\alpha}d\phi.
\end{align*}
This resolves the case $d=2$:
\begin{equation}\label{eq:or}
\iota_D(x) - \kappa_D(x) \leq c
\begin{cases}
\delta_D(x)^{-\alpha+1}, &\text{ $\alpha> 1$,}\\
\log\left(e+\frac{1}{\delta_D(x)}\right), &\text{ $\alpha = 1 $,}\\
1, &\text{ $\alpha < 1$.}
\end{cases}
\end{equation}

\smallskip

$\mathbf{Case \enspace 3: d=3}$. For $y = (y_1, y_2, ..., y_d) \in \Rd$, we let $\tilde y = (y_1, y_2, ...,y_{d-1})$, so that $y = (\tilde y, y_d)$. By possibly changing coordinates, we may assume that $Q=0 \in \partial D$, $x=(\tilde 0, x_d)$, $x'= (\tilde 0, r)$, and $x''= (\tilde 0, -r)$. Here $\tilde 0=(0, ..., 0) \in \mathbb{R}^{d-1}$. Thus, $0 < x_d < r/4$. Since
\vspace*{-2mm}
\begin{align*}
    \begin{split}
    \int\limits_{(\Rd \setminus ({O(x)} \cup {I(x)})) \cap \{y \in \Rd : |y| \geq r\}} 
    |x-y|^{-d-\alpha}dy 
&\leq 
\int\limits_{|z| > \frac{r\sqrt{3}}{2}} |z|^{-d-\alpha} dz = c r^{-\alpha},
    \end{split}
\end{align*}
we may restrict our attention to $(\Rd \setminus ({O(x)} \cup {I(x)})) \cap \{y \in \Rd : |y| < \frac{r\sqrt{3}}{2}\}$. 
\newline
\noindent
We are going to estimate $|x-y|$ in terms of $|y|$. Using the scalar product, we get 
\begin{enumerate}
\item $y \in {O(x)} \Longleftrightarrow |y|^2 < 2(y,x'')$,
\item $y \in {I(x)} \Longleftrightarrow |y|^2 < 2(y,x')$,
\end{enumerate}
hence $y \in \Rd \setminus ({I(x)} \cup {O(x)})$ if and only if $|y|^2 \geq 2|(y,x')|=2|y_d|r$, and then
\begin{align*}|x-y|^2 &= |\tilde y|^2 + |x_d-y_d|^2 = |y|^2 + x_d^2 - 2x_dy_d \\
&> |y|^2 + x_d^2 - 2|y_d|r/4 \ge 3|y|^2/4 + x_d^2 > |y|^2/2 + x_d^2.
\end{align*}
We get
\begin{align*}
\int\limits_{(\Rd \setminus ({O(x)} \cup {I(x)})) \cap \{y \in \Rd : |y| < \frac{r\sqrt{3}}{2}\}} \!\!\!\!\!\!\!\!\!\!\!\!\!\!\!\!\!\!\!\!\!\! |x-y|^{-d-\alpha}dy&\leq  
\!\!\!\!\!\!\!\!\!\!\!\!\!\!\!\!\!\!\! \int\limits_{(\Rd \setminus ({O(x)} \cup {I(x)})) \cap \{y \in \Rd : |y| < r\}} \!\!\!\!\!\!\!\!\!\!\!\!\!\!\!\!\!\!\!\!\!\!\!\!\! \left(\frac{|y|^2}{2} + x_d\right)^{\frac{-d-\alpha}{2}}dy\\
&= \!\!\!\!\! \int\limits_{\{y \in \Rd : |\tilde{y}| < r\}} \!\!\!\!\! \int_{0}^{|\tilde{y}|^2} \left(\frac{|\tilde{y}|^2}{2} +y_d + x_d\right)^{\frac{-d-\alpha}{2}}dy_d\, d\tilde{y} .
\end{align*}
Further, by cylindrical coordinates,
\begin{align*}
\int\limits_{\{y \in \Rd : |\tilde{y}| < r\}} \!\!\!\!\!\!\! \int_{0}^{|\tilde{y}|^2} \left(\frac{|\tilde{y}|^2}{2} +y_d + x_d\right)^{\frac{-d-\alpha}{2}}dy_d\, d\tilde{y}  &= c \int_{0}^{r}\int_{0}^{\rho^2} \left(\rho^2+y_d^2+x_d^2\right)^{\frac{-d-\alpha}{2}} dy_{d}\,d\rho\\
&\leq c \int_0^r\rho^{d-2}\rho^2\left(\rho^2+x_d^2\right)^{\frac{-d-\alpha}{2}} d\rho \\
&= \int_0^{x_d} \rho^{d}\left(\rho^2+x_d^2\right)^{\frac{-d-\alpha}{2}} d\rho + \int_{x_d}^r \rho^{d}\left(\rho^2+x_d^2\right)^{\frac{-d-\alpha}{2}} d\rho.
\end{align*}
For $\alpha \neq 1$, we estimate the first integral with $\int_0^{x_d} \rho^{d}(x_d^2)^{\frac{-d-\alpha}{2}} d\rho = cx_d^{-\alpha-1}$ and the second integral with 
$\int_{x_d}^r \rho^{d}(\rho^2)^{\frac{-d-\alpha}{2}} d\rho \leq cx_d^{-\alpha-1}$. Similarly, for $\alpha = 1$, the first integral may be bounded by $\int_0^{x_d} \rho^{d}(x_d^2)^{\frac{-d-\alpha}{2}} d\rho \leq c,$ and the second integral by $\int_{x_d}^r \rho^{d}(\rho^2)^{\frac{-d-\alpha}{2}} d\rho=c(\log r-\log x_d)$. Therefore, again,

\begin{align}\label{eq:or2}
\iota_D(x) - \kappa_D(x) \leq c
\begin{cases}
\delta_D(x)^{-\alpha+1}, &\text{ $\alpha> 1$,}\\
\log \left(e+\frac{1}{\delta_D(x)}\right), &\text{ $\alpha = 1 $,}\\
1, &\text{ $\alpha < 1$.}
\end{cases}
\end{align}\qedhere
\end{proof}

\section{Quadratic forms}\label{sec:quad-forms}

From now on we assume that $D$ is a bounded $C^{1,1}$ set in $\Rd$.
As usual, we let
$P^{D}_{t}f(x)=\int_{\Rd} \pD(t,x,y)f(y)\,dy$ and $\hat P^{D}_{t}f(x)=\int_{\Rd} \phD(t,x,y)f(y)\,dy$, $t>0$, $x\in \Rd$.
It is well known that $\{P^{D}_{t}\}_{t>0}$ is a strongly continuous contraction semigroup on $L^{2}(D)$.
Let $f\in L^2(D)$, that is $f\in L^2(\Rd)$ and $f=0$ on $D^c$. For $t>0$, we define as usual,
\begin{equation}\label{eq:diff}
\mathcal{E}^{D}_{t}[f]=\frac{1}{t}\langle f-P^D_{t}f,f \rangle
=\frac{1}{t}\langle f,f \rangle-\frac{1}{t}\langle P^D_{t/2}f,P^D_{t/2}f \rangle.
\end{equation}
Clearly, $\mathcal{E}^D_t[f]\ge 0$.
Let $E_\lambda$ be the spectral family of projections corresponding to (the generator of) $P^D_t$ \cite{MR2778606}. Then
$$\mathcal{E}^{D}_{t}[f]=\int_{[0,\infty)}\frac1t (1-e^{-\lambda t})\,d[E_\lambda f,f]= \int_{[0,\infty)}\int_0^\lambda e^{-\mu t}d\mu\, d[E_\lambda f,f] 
$$ is finite and nonincreasing in $t$, as is well known, see \cite[Lemma~1.3.4]{MR2778606}.   We define
$$\mathcal{E}^{D}[f]=\sup_{t>0}\mathcal{E}^{D}_{t}[f]=\lim_{t\rightarrow 0^{+}}\mathcal{E}^{D}_{t}[f],$$
the Dirichlet form of the killed process.
It is well known that 
$\mathcal{E}^{D}[f]=\mathcal{E}^{\Rd}[f]$, {for $f\in L^2(D)$}.
Since $p(t,x,y)=p(t,y,x)$, $\int p(t,x,y)\,dy=1$, $p(t,x,y)/t\leq c\nu(y-x)$ by \eqref{pt:approx}, {and $p(t,x,y)/t\to \nu(x,y)$ by \cite[(2.10)]{MR3613319},}  we get
\begin{align}
\nonumber
\mathcal{E}^{D}[f] &=\mathcal{E}^{\Rd}[f] 
=\lim_{t\to 0^+}\frac1t \int_{\Rd}\int_{\Rd} p(t,x,y) f(x)[f(x)-f(y)]\,dy\,dx\\
\nonumber
&=\lim_{t\to 0^+} \frac12 \int_{\Rd}\int_{\Rd} [f(y)-f(x)]^2p(t,x,y)/t\,dy\,dx\\
&=\frac12 \int_{\Rd}\int_{\Rd} [f(y)-f(x)]^2\nu(y-x)\,dy\,dx.
\label{e.fDf}
\end{align}
Since $f=0$ on $D^c$, we get the following Hardy-type inequality
\begin{align}
\mathcal{E}^{D}[f]
&\ge 2\frac12 \int_D\int_{D^c}f(x)^2\nu(y-x)\,dy\,dx
= \int_D f(x)^2\kappa_D(x)\,dx, \quad f\in L^2(D).\label{eq:Hi}
\end{align}
\begin{lemma}\label{lem:sc}
$\{\hat P^{D}_{t}\}_{t>0}$ is a strongly continuous contraction semigroup on $L^{2}(D)$.
\end{lemma}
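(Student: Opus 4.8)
The plan is to verify, in order, that (a) each $\hat P^D_t$ is a contraction on $L^2(D)$, (b) the semigroup identity $\hat P^D_{t+s}=\hat P^D_t\hat P^D_s$ holds, and (c) $\hat P^D_tf\to f$ in $L^2(D)$ as $t\to 0^+$. For (a), first note that $\phD(t,x,y)\le p(t,x,y)\le ct^{-d/\alpha}$ and $|D|<\infty$, so $\hat P^D_tf$ is well defined for $f\in L^2(D)\subset L^1(D)$, and it vanishes off $D$ because $\phD\le \pD$ and $\pD(t,x,\cdot)=0$ for $x\notin D$. The contraction bound comes from the sub-Markov property $\int_{\Rd}\phD(t,x,y)\,dy=\PP_x(t<\sigma_D)\le 1$ together with the symmetry $\phD(t,x,y)=\phD(t,y,x)$ of Theorem~\ref{thm:symmetry}: by Cauchy--Schwarz,
\[
|\hat P^D_tf(x)|^2\le\Big(\int_{\Rd}\phD(t,x,y)\,dy\Big)\int_{\Rd}\phD(t,x,y)f(y)^2\,dy\le\int_{\Rd}\phD(t,x,y)f(y)^2\,dy,
\]
and integrating in $x$, then using symmetry and Tonelli, gives $\|\hat P^D_tf\|_2^2\le\int f(y)^2\,dy$. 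For (b), insert the Chapman--Kolmogorov equation for $\phD$ into the definition of $\hat P^D_{t+s}f$ and interchange the order of integration (Tonelli for $f\ge 0$, then split a general $f$ into positive and negative parts).

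The substantive point is (c). I would use the probabilistic representation $\hat P^D_tf(x)=\E_x[t<\sigma_D;f(X_t)]$. The key fact is that $\sigma_D>0$ holds $\PP_x$-a.s. for every $x\in D$. To see this, fix $\delta>0$ with $B(x,\delta)\subset D$; if $t<\tau_{B(x,\delta/2)}$, then every segment $[X_{s-},X_s]$ with $0<s\le t$ has both endpoints in $\overline{B(x,\delta/2)}\subset D$, hence meets no point of $D^c$, so $\sigma_D\ge \tau_{B(x,\delta/2)}$; and $\tau_{B(x,\delta/2)}>0$ a.s. by right continuity of $X$ and $x\in B(x,\delta/2)$. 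Consequently $\PP_x(\sigma_D\le t)\to 0$ as $t\to 0^+$. Now fix $f\in C_c(D)$ (hence bounded). On the event $\{t<\sigma_D\}$, whose probability tends to $1$, one has $f(X_t)\to f(x)$ by right continuity of $X$ and continuity of $f$, so dominated convergence with majorant $\|f\|_\infty$ yields $\hat P^D_tf(x)\to f(x)$ for every $x\in D$. Since $|\hat P^D_tf|\le \|f\|_\infty\1_D\in L^2$ (using $|D|<\infty$), a second application of dominated convergence gives $\hat P^D_tf\to f$ in $L^2(D)$.

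Finally, strong continuity on all of $L^2(D)$ follows from the density of $C_c(D)$ in $L^2(D)$ and the uniform bound $\|\hat P^D_t\|\le 1$ from (a): given $g\in L^2(D)$ and $\varepsilon>0$, pick $f\in C_c(D)$ with $\|f-g\|_2<\varepsilon$ and estimate
\[
\|\hat P^D_tg-g\|_2\le\|\hat P^D_t(g-f)\|_2+\|\hat P^D_tf-f\|_2+\|f-g\|_2\le 2\varepsilon+\|\hat P^D_tf-f\|_2,
\]
then let $t\to 0^+$ and use that $\varepsilon$ was arbitrary. The only genuine obstacle is the estimate $\PP_x(\sigma_D\le t)\to 0$ as $t\to 0^+$; once this is in place, everything else reduces to the symmetry of $\phD$ and the Chapman--Kolmogorov equation, both already established.
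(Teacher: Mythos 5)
Your proposal is correct, and for the substantive part (strong continuity) it takes a genuinely different route from the paper. The paper proves strong continuity analytically: it writes $\hat P^D_t=P^D_t-(P^D_t-\hat P^D_t)$, bounds $\langle P^D_tf-\hat P^D_tf,g\rangle$ via the Duhamel formula of Corollary~\ref{fact:huntIke}, the killing-intensity comparison of Theorem~\ref{thm:intens-diff-bd}, and the Hardy inequality \eqref{eq:Hi}, reduces to the known strong continuity of $P^D_t$, and then upgrades weak to strong continuity by a general semigroup theorem. You instead argue probabilistically: $\sigma_D\ge\tau_{B(x,\delta/2)}>0$ $\PP_x$-a.s.\ because every segment $[X_{s-},X_s]$ with $s<\tau_{B(x,\delta/2)}$ lies in the convex set $\overline{B(x,\delta/2)}\subset D$, whence $\PP_x(\sigma_D\le t)\to0$, pointwise convergence for $f\in C_c(D)$ by dominated convergence, and then density plus the contraction bound. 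Your argument is more elementary and needs no smoothness of $D$ beyond boundedness (the paper's route uses the $C^{1,1}$ hypothesis through Theorem~\ref{thm:intens-diff-bd}), which is a genuine gain in generality; what the paper's heavier route buys is that the quantitative estimate on $\langle P^D_tf-\hat P^D_tf,g\rangle$ developed there is precisely the machinery reused immediately afterwards to identify the Dirichlet form in Theorem~\ref{thm:Dirichlet-form}. Your contraction argument (sub-Markov property plus symmetry of $\phD$ and Cauchy--Schwarz) is also a valid variant of the paper's (Jensen's inequality against the full kernel $p$, which does not need Theorem~\ref{thm:symmetry}); both are fine since symmetry is established earlier.
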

\begin{proof}
The contractivity is trivial: by  Corollary~\ref{cor:comp} and Jensen's inequality, for $f\ge 0$,
\begin{align*}
\int_{\mathbb{R}^{d}}\left(\int_{\mathbb{R}^{d}}\phD(t,x,y)f(y)\,dy\right)^{2}dx&\leq \int_{\mathbb{R}^{d}}\left(\int_{\mathbb{R}^{d}}p(t,x,y)f(y)\,dy\right)^{2}dx\\
&\leq\int_{\mathbb{R}^{d}}\int_{\mathbb{R}^{d}}p(t,x,y)f(y)^{2}\,dy\,dx=\|f\|^{2}_{L^{2}(D)}.
\end{align*}	
For general $f\in L^2(D)$, we write $f=f_+-f_-$, where $f_+=f\vee 0$ and $f_-=(-f)\vee 0$, and we note that $\|f\|^{2}_{L^{2}(D)}=\|f_+\|^{2}_{L^{2}(D)}+\|f_-\|^{2}_{L^{2}(D)}=\|\,|f|\,\|^{2}_{L^{2}(D)}$, while $\left|\int_{\mathbb{R}^{d}}\phD(t,x,y)f(y)\,dy\right|\leq \int_{\mathbb{R}^{d}}\phD(t,x,y)|f(y)|\,dy$. This extends the contractivity to arbitrary (signed) $f\in L^2(D)$. Such extensions will be used tacitly in what follows. The semigroup property follows from Chapman--Kolmogorov equations for $\phD$.

For $t>0$ and nonnegative $f,g\in L^{2}(D)$, by Corollary~\ref{fact:huntIke}, Theorem~\ref{thm:symmetry}, the inequality $ab\leq \frac{1}{2}\left(a^{2}+b^{2}\right)$, Theorem~\ref{thm:intens-diff-bd}, Hardy inequality \eqref{eq:Hi}, and strong continuity of the semigroup $P^D_t$, we obtain
\begin{align*}
&0\leq \langle P_{t}^{D}f-\hat{P}^{D}_{t}f,g\rangle_{L^{2}(D)}\\
&=\int_{\mathbb{R}^{d}}\int_{\mathbb{R}^{d}}\int_{0}^{t}\int_{D}\int_{D\setminus D_{w}}\phD(s,x,w)\nu(z-w)\pD(t-s,z,y)f(y)g(x)\,dz\,dw\,ds\,dx\,dy\\
&=\int_{0}^{t}\int_{D}\int_{D\setminus D_{w}}\hat{P}^{D}_{s}g(w)\nu(z-w)P^{D}_{t-s}f(z)\,dz\,dw\,ds\\
&\leq\frac{1}{2}\int_{0}^{t}\int_{D}\int_{D\setminus D_{w}}\left(\hat{P}^{D}_{s}g(w)^{2}+P^{D}_{t-s}f(z)^{2}\right)\nu(z-w)\,dz\,dw\,ds\\
&\leq c\int_{0}^{t}\int_{D} P^{D}_{s}g(w)^{2}\kappa_D(w)\,dw\,ds+c\int_{0}^{t}\int_{D}P^{D}_{t-s}f(z)^{2}\kappa_D(z)\,dz\,ds\\
&\leq c \int_0^t \mathcal{E}^{D}[P^D_s g]\,ds + c \int_0^t \mathcal{E}^{D}[P^D_s f]\,ds\\
&=c\left( \|f\|_{L^2(D)}^2-\|P^D_t f\|_{L^2(D)}^2+\|g\|_{L^2(D)}^2-\|P^D_t g\|_{L^2(D)}^2\right)\to 0\quad \mbox{as}\quad t\to 0. 
\end{align*}
It follows that $\hat{P}^D_t=P^D_t-(P_t^D-\hat P^D_t)$ is weakly,  hence strongly continuous on $L^2(D)$, see \cite[Theorem 1.6]{MR2229872}.
\end{proof}

By Section~\ref{subsec:comp},  the heat kernels of $X$ and $\hat{X}$ are not comparable in general.
In this section, however, we will show that their quadratic forms are comparable.

The Dirichlet form of the shot-down process is defined in the usual fashion: we let
\begin{equation}\label{eq:diffhat}
\hat{\mathcal{E}}^{D}_{t}[f]=\frac{1}{t}\langle f-\hat{P}^D_{t}f,f \rangle
=\frac{1}{t}\langle f,f \rangle-\frac{1}{t}\langle \hat{P}^D_{t/2}f,\hat{P}^D_{t/2}f \rangle
\end{equation}
and consider the (monotone) limit
$$\hat{\mathcal{E}}^{D}[f]=\lim_{t\rightarrow 0^{+}}\hat{\mathcal{E}}^{D}_{t}[f]=\sup_{t>0}\hat{\mathcal{E}}^{D}_{t}[f].$$
The domains of the considered  forms $\mathcal{E}^{D}$ and $\hat{\mathcal{E}}^{D}$ are, respectively,
\begin{align*}
\mathcal{D}(\mathcal{E}^{D})=\{f\in L^{2}(D):\mathcal{E}^{D}[f]<\infty\}, \qquad
\mathcal{D}(\hat{\mathcal{E}}^{D})=\{f\in L^{2}(D):\hat{\mathcal{E}}^{D}[f]<\infty\}.
\end{align*}
They are linear subspaces of $L^2(D)$ because of the Cauchy-Schwarz inequality, e.g.,
\begin{equation}\label{eq:bf}
\hat{\mathcal{E}}^{D}_{t}(f,g):=\frac{1}{t}\langle f-\hat{P}^D_{t}f,g \rangle
\leq \hat{\mathcal{E}}^{D}_{t}[f]^{1/2}\hat{\mathcal{E}}^{D}_{t}[g]^{1/2},\quad t>0,\quad f,g\in L^2(D). 
\end{equation}
Then, $\mathcal{D}(\hat{\mathcal{E}}^{D})\subset \mathcal{D}(\mathcal{E}^{D})$. 
Indeed, if $f\in \mathcal{D}(\hat{\mathcal{E}}^{D})$, then the following is bounded for $t>0$,
\begin{align*}
\hat{\mathcal{E}}^{D}_{t}[f]&=\frac{1}{t}\left(\langle f,f\rangle -\langle\hat{P}^D_{t}f,f \rangle\right)\\
&=\frac{1}{t}\big(
\langle f_+,f_+\rangle -\langle\hat{P}^D_{t}f_+,f_+ \rangle
+
\langle f_-,f_-\rangle -\langle\hat{P}^D_{t}f_-,f_- \rangle
+
\langle\hat{P}^D_{t}f_+,f_- \rangle
+
\langle\hat{P}^D_{t}f_-,f_+ \rangle
\big),
\end{align*}
and so $\hat{\mathcal{E}}^{D}_{t}[f_+]$ is bounded for $t>0$. Thus $f_+\in \mathcal{D}(\hat{\mathcal{E}}^{D})$. Since $f_+\ge 0$,  by \eqref{eq:diff} and \eqref{eq:diffhat} we get
$\hat{\mathcal{E}}_t^{D}[f_+]\ge \mathcal{E}^{D}_t[f_+]$, so
$f_+\in \mathcal{D}(\mathcal{E}^{D})$. Similarly, $f_-\in \mathcal{D}(\mathcal{E}^{D})$, hence $f\in \mathcal{D}(\mathcal{E}^{D})$, as needed.
Here is the main result of this section.
\begin{theorem}\label{thm:Dirichlet-form}
Let $D\subset \mathbb{R}^{d}$ be an open bounded $C^{1,1}$ set, $\alpha\in(0,2)$, and $\nu$ as in \eqref{eq:alpha-stable-levy-density}. For $f \in \mathcal{D}(\mathcal{E}^{D})$,
\begin{equation}\label{eq:rbf}
\hat{\mathcal{E}}^{D}[f]=\mathcal{E}^{D}[f]+\int_{D}\int_{D\setminus D_{w}}f(w)\nu(z-w)f(z)\,dz\,dw.
\end{equation}
Furthermore, $\int_{D}\int_{D\setminus D_{w}}|f(w)\nu(z-w)f(z)|\,dz\,dw\leq
\mathcal{E}^{D}[f]<\infty$ and $\mathcal{D}(\hat{\mathcal{E}}^{D})=\mathcal{D}(\mathcal{E}^{D})$.
\end{theorem}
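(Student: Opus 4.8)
The plan is to obtain \eqref{eq:rbf} by letting $t\to 0^{+}$ in the Duhamel representation of Corollary~\ref{fact:huntIke}. Throughout put $\mathcal{J}[f]=\int_{D}\int_{D\setminus D_{w}}f(w)\,\nu(z-w)\,f(z)\,dz\,dw$ and let $\mathcal{J}(\cdot,\cdot)$ be its (symmetric) polar bilinear form; note the kernel $(w,z)\mapsto\nu(z-w)\1_{D\setminus D_{w}}(z)$ is symmetric on $D\times D$ since $z\in D_{w}\iff w\in D_{z}$ for $w,z\in D$. Multiplying the identity of Corollary~\ref{fact:huntIke} by $f(y)f(x)$, integrating over $\Rd\times\Rd$, and applying Tonelli together with the symmetry of $\phD$ (Theorem~\ref{thm:symmetry})---this is exactly the computation carried out inside the proof of Lemma~\ref{lem:sc}---gives, for $f\ge 0$ in $L^{2}(D)$ and $t>0$,
\[
\hat{\mathcal{E}}^{D}_{t}[f]-\mathcal{E}^{D}_{t}[f]=\tfrac1t\big\langle P^{D}_{t}f-\hat{P}^{D}_{t}f,\,f\big\rangle=\tfrac1t\int_{0}^{t}\mathcal{J}\big(\hat{P}^{D}_{s}f,\,P^{D}_{t-s}f\big)\,ds=:A(t).
\]
The signed case will be recovered at the very end.

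First I would settle the absolute convergence and the stated bound. By the Cauchy--Schwarz inequality applied to the symmetric kernel above, and $\int_{D\setminus D_{w}}\nu(z-w)\,dz=\iota_{D}(w)-\kappa_{D}(w)$,
\[
\int_{D}\int_{D\setminus D_{w}}|f(w)|\,\nu(z-w)\,|f(z)|\,dz\,dw\le\int_{D}f(w)^{2}\big(\iota_{D}(w)-\kappa_{D}(w)\big)\,dw .
\]
Theorem~\ref{thm:intens-diff-bd} gives $\iota_{D}-\kappa_{D}\le\kappa_{D}$ on $D$, and the Hardy inequality \eqref{eq:Hi} then bounds the right-hand side by $\mathcal{E}^{D}[f]$. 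Polarizing yields $|\mathcal{J}(g,h)|\le\mathcal{E}^{D}[g]^{1/2}\mathcal{E}^{D}[h]^{1/2}$, so $\mathcal{J}$ is continuous on $\mathcal{D}(\mathcal{E}^{D})$ in the form norm. I also record that, by the same theorem, $\iota_{D}-\kappa_{D}$ is \emph{integrable} over $D$ (its bounds $\delta_{D}^{1-\alpha}$, $\log(e+\delta_{D}^{-1})$, $1$ are each integrable on the bounded set $D$); this is needed below.

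The crux is $A(t)\to\mathcal{J}[f]$ as $t\to 0^{+}$, for $f\ge 0$ in $\mathcal{D}(\mathcal{E}^{D})$. Using $|\hat{P}^{D}_{s}f|\le P^{D}_{s}f$, the contraction property $\mathcal{E}^{D}[P^{D}_{u}g]\le\mathcal{E}^{D}[g]$ of the killed semigroup, and \eqref{eq:Hi}, one first obtains $\int_{D}(\hat{P}^{D}_{s}f)^{2}\kappa_{D}\le\mathcal{E}^{D}[f]$ and the uniform bound $0\le\mathcal{J}(\hat{P}^{D}_{s}f,P^{D}_{t-s}f)\le\mathcal{E}^{D}[f]$ for $0\le s\le t$. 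For bounded continuous $f\in\mathcal{D}(\mathcal{E}^{D})$ one has $\hat{P}^{D}_{s}f(w)\to f(w)$ and $P^{D}_{u}f(z)\to f(z)$ as $s,u\downarrow 0$ for every $w,z\in D$ (right continuity of $X$ and positivity of $\sigma_{D},\tau_{D}$), all three are $\le\|f\|_{\infty}$, and the majorant $\|f\|_{\infty}^{2}\,\nu(z-w)\1_{D\setminus D_{w}}(z)$ is integrable on $D\times D$ by the integrability of $\iota_{D}-\kappa_{D}$; dominated convergence gives $\sup_{0\le s\le t}\big|\mathcal{J}(\hat{P}^{D}_{s}f,P^{D}_{t-s}f)-\mathcal{J}[f]\big|\to 0$, hence $A(t)\to\mathcal{J}[f]$. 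For general $f\ge 0$ in $\mathcal{D}(\mathcal{E}^{D})$ I would approximate it in the form norm by bounded continuous $f_{n}\ge 0$; combining the bilinear estimate of the previous paragraph with $\int_{D}(\hat{P}^{D}_{s}g)^{2}\kappa_{D}\le\mathcal{E}^{D}[g]$ applied to $g=|f-f_{n}|$ yields $|A(t)-A_{n}(t)|\le 2\,\mathcal{E}^{D}[f]^{1/2}\mathcal{E}^{D}[f-f_{n}]^{1/2}$ \emph{uniformly in $t>0$}, where $A_{n}$ is the analogue of $A$ with $f_{n}$ in place of $f$; since $A_{n}(t)\to\mathcal{J}[f_{n}]$ and $\mathcal{J}[f_{n}]\to\mathcal{J}[f]$, interchanging the two limits gives $A(t)\to\mathcal{J}[f]$. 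Therefore $\hat{\mathcal{E}}^{D}[f]=\lim_{t\to0^{+}}\big(\mathcal{E}^{D}_{t}[f]+A(t)\big)=\mathcal{E}^{D}[f]+\mathcal{J}[f]<\infty$, so $f\in\mathcal{D}(\hat{\mathcal{E}}^{D})$ and \eqref{eq:rbf} holds for nonnegative $f$.

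Finally I would remove the sign restriction. For $f\in\mathcal{D}(\mathcal{E}^{D})$ we have $f_{\pm}\in\mathcal{D}(\mathcal{E}^{D})$, hence $f_{\pm}\in\mathcal{D}(\hat{\mathcal{E}}^{D})$ by the previous step, hence $f=f_{+}-f_{-}\in\mathcal{D}(\hat{\mathcal{E}}^{D})$; together with the inclusion $\mathcal{D}(\hat{\mathcal{E}}^{D})\subseteq\mathcal{D}(\mathcal{E}^{D})$ already proved, this gives $\mathcal{D}(\hat{\mathcal{E}}^{D})=\mathcal{D}(\mathcal{E}^{D})$. On this common domain $\hat{\mathcal{E}}^{D}$ and $\mathcal{E}^{D}+\mathcal{J}$ are quadratic forms agreeing on all nonnegative functions, and since a symmetric bilinear form $B$ is determined by the values of its quadratic form on nonnegative functions through $B(f_{+},f_{-})=\tfrac12\big(B(|f|,|f|)-B(f_{+},f_{+})-B(f_{-},f_{-})\big)$, the two forms coincide on all of $\mathcal{D}(\mathcal{E}^{D})$, which is \eqref{eq:rbf}. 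The step I expect to be the main obstacle is the limit interchange in the crux; it is exactly the \emph{uniform-in-$t$} form-norm approximation that secures it, and this in turn relies on the global integrability of $\iota_{D}-\kappa_{D}$ furnished by Theorem~\ref{thm:intens-diff-bd}.
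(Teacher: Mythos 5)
Your argument is correct in outline but takes a genuinely different route from the paper at the crucial step. Both proofs start from the identity $\hat{\mathcal{E}}^{D}_{t}[f]-\mathcal{E}^{D}_{t}[f]=\frac1t\langle P^{D}_{t}f-\hat{P}^{D}_{t}f,f\rangle$ and the Duhamel formula of Corollary~\ref{fact:huntIke}. The paper, however, iterates Duhamel once more, writing $\langle P^{D}_{t}f-\hat P^{D}_{t}f,f\rangle=I_{1}-I_{2}$ with $I_{1}=\int_{0}^{t}\mathcal{J}(P^{D}_{s}f,P^{D}_{t-s}f)\,ds$ involving only the \emph{killed} semigroup; the convergence $\frac1t I_{1}\to\mathcal{J}[f]$ then follows from the spectral-theoretic strong continuity $\mathcal{E}^{D}[P^{D}_{s}f-f]\to0$, valid for every $f\in\mathcal{D}(\mathcal{E}^{D})$ with no density argument, while the bulk of the paper's proof is spent showing $I_{2}=o(t)$ via the sharp Chen--Kim--Song heat-kernel bounds combined with \eqref{eq:or2}. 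You instead keep $\hat P^{D}_{s}f$ inside the integrand and prove $\frac1t\int_{0}^{t}\mathcal{J}(\hat P^{D}_{s}f,P^{D}_{t-s}f)\,ds\to\mathcal{J}[f]$ directly, via dominated convergence against the majorant $\|f\|_{\infty}^{2}\nu(z-w)\1_{D\setminus D_{w}}(z)$ (integrable because $\int_{D}(\iota_{D}-\kappa_{D})<\infty$ by \eqref{eq:or2}) followed by a uniform-in-$t$ form-norm approximation. This entirely avoids the heat-kernel estimates and the second Duhamel iteration, which is a real simplification; what it costs you is an approximation hypothesis the paper never needs.

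Two points require tightening. First, you assert that nonnegative $f\in\mathcal{D}(\mathcal{E}^{D})$ can be approximated in form norm by \emph{bounded continuous} $f_{n}\ge0$; this is the density of (essentially) $C_{c}(D)\cap H^{\alpha/2}$ in $\{f\in H^{\alpha/2}(\Rd):f=0 \text{ on } D^{c}\}$, which is true for bounded $C^{1,1}$ (indeed Lipschitz) $D$ but is a nontrivial known fact you neither prove nor cite. You can lighten this load: continuity is not actually needed in your dominated-convergence step, since for bounded measurable $f$ one has $\hat P^{D}_{s}f(w)\to f(w)$ and $P^{D}_{u}f(z)\to f(z)$ at every Lebesgue point of $f$ in $D$ (because $\PP_{w}(\sigma_{D}\le s)\to0$ and $p_{s}$ is an approximate identity), and truncations $f\wedge n$ converge to $f$ in form norm by a direct computation with the jump kernel; this removes the density input altogether. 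Second, Theorem~\ref{thm:intens-diff-bd} gives $\iota_{D}-\kappa_{D}=o(\kappa_{D})$ only as $\delta_{D}(x)\to0$; to get $\iota_{D}-\kappa_{D}\le C\kappa_{D}$ on all of $D$ you must also use that $\iota_{D}\le c\,\delta_{D}^{-\alpha}$ is bounded away from $\partial D$ while $\kappa_{D}$ is bounded below on bounded $D$, and the resulting constant $C$ then propagates into your bound $\int_{D}\int_{D\setminus D_{w}}|f(w)\nu(z-w)f(z)|\,dz\,dw\le C\,\mathcal{E}^{D}[f]$ (as in the paper's \eqref{eq:sk}). With these repairs the proof is sound; the final polarization argument reducing the signed case to the nonnegative one is a clean alternative to the paper's direct treatment.
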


\begin{proof}
Let $f\in\mathcal{D}(\mathcal{E}^D)$ and $t>0$. We have $\hat{\mathcal{E}}^{D}_{t}[f]=\mathcal{E}^{D}_{t}[f]+\frac{1}{t}\langle P_{t}^{{D}}f-\hat{P}_{t}^{{D}}f,f\rangle$.
{By Corollary~\ref{fact:huntIke},}
$$
\langle P_{t}^{D}f-\hat{P}_{t}^{D}f,f\rangle=\int_{\mathbb{R}^{d}}\int_{\mathbb{R}^{d}}\left(\pD(t,x,y)-\phD(t,x,y)\right)f(y)f(x)\,dy\,dx:=I_{1}-I_{2},
$$
where
\begin{align*}
I_{1}&=\int_{\mathbb{R}^{d}}\int_{\mathbb{R}^{d}}\int_{0}^{t}\int_{D}\int_{D\setminus D_{w}}\pD(s,x,w)\nu(z-w)p_{D}(t-s,z,y)f(y)f(x)\,dz\,dw\,ds\,dx\,dy\\
&=\int_{0}^{t}\int_{D}\int_{D\setminus D_w}\nu(z-w) P_{s}^{D}f(w)P_{t-s}^{D}f(z)\,dz\,dw\,ds
\end{align*}
and
\begin{align*}
 I_{2}&=\int_{\mathbb{R}^{d}}\int_{\mathbb{R}^{d}}\int_{0}^{t}
\int_{D}\int_{D\setminus D_{w}}\int_{0}^{s}\int_{D}\int_{D\setminus D_{w'}}\phD(s',x,w')\nu(z'-w')\pD(s-s',z',w)\nu(z-w)\\
&\quad \times p_D(t-s,z,y)f(y)f(x)\,dz'\,dw'\,ds'\,dz\,dw\,ds\,dx\,dy.
\end{align*}
Since $\phD(t,u,v)$ and $\pD(t,u,v)$ are symmetric in the space variables,
\begin{align*}
I_{2}&=\int_{0}^{t}\int_{D}\int_{D\setminus D_{w}}\int_{0}^{s}\int_{D}\int_{D\setminus D_{w'}}\nu(z'-w')\pD(s-s',z',w)\nu(z-w)\\
&\quad \times \hat{P}^{D}_{s'}f(w')P^{D}_{t-s}f(z)\,dz'\,dw'\,ds'\,dz\,dw\,ds.
\end{align*}
\noindent Then,
\begin{align*}
2|I_{2}|&\leq\int_{0}^{t}\int_{D}\int_{D\setminus D_{w}}\int_{0}^{s}\int_{D}\int_{D\setminus D_{w'}}\nu(z'-w')\pD(s-s',z',w)\nu(z-w)\\
&\quad \times\left(P^{D}_{s'}f(w')^{2}+P^{D}_{t-s}f(z)^{2}\right)\,dz'\,dw'\,ds'\,dz\,dw\,ds=K_1+K_2=2K_1.
\end{align*}
To justify the latter equality, namely $K_1=K_2$, we change variables $t-s=u'$, $t-s'=u$, use the symmetry of $\nu$,  the symmetry of $p_D$ in the space variables and the equivalence: 
$z\in D\setminus D_{w}$ if and only if $w\in D\setminus D_{z}$. 

Since $|P^{D}_{t}f|\leq P^{D}_{t}|f|$, $|\hat{P}^{D}_{t}f|\leq \hat{P}^{D}_{t}|f|$ and $|f|\in\mathcal{D}(\mathcal{E}^D)$ by \eqref{e.fDf}, to show that $K_1$ is small, we may and do assume that $f\geq 0$. 
Denote $F_D(w)=\kappa_D(w)-\iota_D(w)$ and consider the following factor of the above integral
$$
L=L(s-s',z'):=\int_{D}\int_{D\setminus D_w}p_D(s-s',z',w)\nu(z-w)\,dz\,dw=\int_D p_D(s-s',z',w)F_D(w)\,dw.
$$
It is well known \cite[Theorem 1.1]{MR2677618} that for any $T>0$ and all $t\leq T,x,y\in D$,
$$
p_D(t,x,y)\approx\left(1\wedge\frac{\delta_D(x)^{\alpha/2}}{\sqrt{t}}\right)\left(1\wedge\frac{\delta_D(y)^{\alpha/2}}{\sqrt{t}}\right)\left(t^{-d/\alpha}\wedge\frac{t}{|x-y|^{d+\alpha}|}\right)
$$
with comparability constant depending only on $\alpha,\, D$ and $T$. Thus, for small $t$, $L/\left(1\wedge \frac{\delta_D(z')^{\alpha/2}}{\sqrt{s-s'}}\right)$ is comparable to 
{\allowdisplaybreaks
\begin{align*}
I_3&:=\int_D\left(1\wedge \frac{\delta_D(w)^{\alpha/2}}{\sqrt{s-s'}}\right)\left((s-s')^{-d/\alpha}\wedge\frac{s-s'}{|z'-w|^{d+\alpha}}\right)F_D(w)\,dw\\
&=\int_{\delta_D(w)>(s-s')^{1/\alpha}}\left((s-s')^{-d/\alpha}\wedge\frac{s-s'}{|z'-w|^{d+\alpha}}\right)F_D(w)\,dw\\
&\quad +\int_{\delta_D(w)\leq(s-s')^{1/\alpha}}\frac{\delta_D(w)^{\alpha/2}}{\sqrt{s-s'}}\left((s-s')^{-d/\alpha}\wedge\frac{s-s'}{|z'-w|^{d+\alpha}}\right)F_D(w)\,dw\\
&= \int_{\delta_D(w)>(s-s')^{1/\alpha},\, |z'-w|>(s-s')^{1/\alpha}}\frac{s-s'}{|z'-w|^{d+\alpha}}F_D(w)\,dw\\
&\quad +\int_{\delta_D(w)>(s-s')^{1/\alpha},\, |z'-w|\leq(s-s')^{1/\alpha}}(s-s')^{-d/\alpha}F_D(w)\,dw\\
&\quad +\int_{\delta_D(w)\leq(s-s')^{1/\alpha},\,|z'-w|>(s-s')^{1/\alpha}}\frac{\delta_D(w)^{\alpha/2}}{\sqrt{s-s'}}\frac{s-s'}{|z'-w|^{d+\alpha}}F_D(w)\,dw\\
&\quad +\int_{\delta_D(w)\leq(s-s')^{1/\alpha},\,|z'-w|\leq(s-s')^{1/\alpha}}\frac{\delta_D(w)^{\alpha/2}}{\sqrt{s-s'}}(s-s')^{-d/\alpha}F_D(w)\,dw.
\end{align*}
}
Let  $\alpha>1$. Then, by \eqref{eq:or2}, $F_D(w)\lesssim\delta_D(w)^{1-\alpha}$, hence, $I_3$ does not exceed a multiple of 
\begin{align*}
 &\int_{\delta_D(w)>(s-s')^{1/\alpha},\, |z'-w|>(s-s')^{1/\alpha}}\frac{(s-s')^{1/\alpha}}{|z'-w|^{d+\alpha}}\,dw\\ 
&+\int_{\delta_D(w)>(s-s')^{1/\alpha},\, |z'-w|\leq(s-s')^{1/\alpha}}(s-s')^{1/\alpha-1-d/\alpha}\,dw\\
&+\int_{\delta_D(w)\leq(s-s')^{1/\alpha},\,|z'-w|>(s-s')^{1/\alpha}}\frac{(s-s')^{1/\alpha}}{|z'-w|^{d+\alpha}}\,dw\\
&+\int_{\delta_D(w)\leq(s-s')^{1/\alpha},\,|z'-w|\leq(s-s')^{1/\alpha}}(s-s')^{1/\alpha-1-d/\alpha}\,dw \lesssim (s-s')^{1/\alpha-1}.
\end{align*}
If $\alpha=1$, then by \eqref{eq:or2}, we similarly obtain that $K_1\lesssim \log\left(e+1/(s-s')\right)$ and if $\alpha<1$, then $I_3$ is bounded, because $F_D$ is bounded and $p_D$ is a subprobability density.

Summarizing,
\begin{equation}\label{eq:L}
I_3\leq cg(s-s'),
\end{equation}
where, for $s>0$,
$$
g(s):=\begin{cases}
s^{1/\alpha-1},\,\alpha>1,\\
\log\left(e+\frac{1}{s}\right),\,\alpha=1,\\
1,\,\alpha<1.\\
\end{cases}
$$
By \eqref{eq:or2}, Hardy inequality \eqref{eq:Hi} and \eqref{eq:L} we have
\begin{align*}
 K_1&\lesssim\int_{0}^{t}\int_{0}^{s}\int_D\int_{D\setminus D_{w'}}g(s-s')\left(1\wedge \frac{\delta_D(z')^{\alpha/2}}{\sqrt{s-s'}}\right)\nu(z'-w')P_{s'}^{D}f(w')^2\,dz'\,dw'\,ds'\,ds\\
 &\leq \int_{0}^{t}\int_{0}^{s}\int_D\int_{D\setminus D_{w'}}g(s-s')\nu(z'-w')P_{s'}^{D}f(w')^2\,dz'\,dw'\,ds'\,ds\\
 &\lesssim \int_{0}^{t}\int_{0}^{s}\int_D g(s-s')\kappa_D(w')P_{s'}^{D}f(w')^2\,dw'\,ds'\,ds\\
 &\leq\int_{0}^{t}\int_{0}^{s}g(s-s')\mathcal{E}^{D}[P^{D}_{s'}f]\,ds'\,ds.
\end{align*}
We easily see that the function $G(s):=\int_{0}^{s}g(s')\,ds'$ is finite, nonnegative and bounded for small $s$, with $G(0)=0$, hence, by the monotonicity of $s\mapsto\mathcal{E}^{D}[P^{D}_{s}f]$, we get
\begin{align*}
K_1&\leq\int_{0}^{t}G(s)\sup_{s'\in[0,s]}\mathcal{E}^{D}[P^{D}_{s'}f]\,ds=\mathcal{E}^D[f]\int_{0}^{t}G(s)\,ds=o(t), \text{\,as\, } t\rightarrow 0^+.   
\end{align*}
We conclude that $\frac{1}{t}I_2\rightarrow 0$, as $t\rightarrow 0^+$.

\noindent
We now let $f\in\mathcal{D}(\mathcal{E}^D)$ be arbitrary, that is, not necessarily nonnegative, and  focus on the integral 
\begin{align*}
I_{1}=\int_{0}^{t}\int_{D}\int_{D\setminus D_{w}}P_{s}^{D}f(w)\nu(z-w)P_{t-s}^{D}f(z)\,dz\,dw\,ds.
\end{align*}
It is finite for nonnegative, hence arbitrary $f\in\mathcal{D}(\mathcal{E}^D)$. Indeed,
by Theorem~\ref{thm:intens-diff-bd}, Cauchy--Schwarz inequality, Hardy inequality \eqref{eq:Hi} and spectral theorem,
{\allowdisplaybreaks
\begin{align*}
I_1&\leq \sqrt{\int_{0}^{t}\int_{D}\int_{D\setminus D_{w}}P_{s}^{D}f(w)^{2}\nu(z-w)\,dz\,dw\,ds}\sqrt{\int_{0}^{t}\int_{D}\int_{D\setminus D_{w}}P_{t-s}^{D}f(z)^{2}\nu(z-w)\,dz\,dw\,ds}\\
&\lesssim \sqrt{\int_{0}^{t}\int_{D}P_{s}^{D}f(w)^{2}\kappa_D(w)\,dw\,ds}\sqrt{\int_{0}^{t}\int_{D}P_{t-s}^{D}f(z)^{2}\kappa_D(z)\,dz\,ds}\\
&\leq\int_{0}^{t}\mathcal{E}^{D}[P^{D}_{s}f]\,ds=\|f\|^{2}_{L^{2}(D)}-\|P_{t}^{D}f\|^{2}_{L^{2}(D)}<\infty.
\end{align*} 
}
for nonnegative, hence arbitrary $f\in\mathcal{D}(\mathcal{E}^D)$.

Moreover, $\{P^{D}_{t}\}_{t>0}$ is a 
semigroup of contractions, 
so by \eqref{eq:Hi},
{\allowdisplaybreaks
\begin{align*}
&\left|\frac{1}{t}I_{1}-\int_{D}\int_{D\setminus D_{w}}f(w)\nu(z-w)f(z)\,dz\,dw\right|\\
&\leq\frac{1}{t}\int_{0}^{t}\int_{D}\int_{D\setminus D_{w}}\nu(z-w)\left|P_{s}^{D}f(w)P_{t-s}^{D}f(z)-f(w)f(z)\right|\,dz\,dw\,ds\\
&\leq\frac{1}{t}\int_{0}^{t}\int_{D}\int_{D\setminus D_{w}}\nu(z-w)\left|P_{s}^{D}f(w)-f(w)\right||P_{t-s}^{D}f(z)|\,dz\,dw\,ds\\
&\quad +\frac{1}{t}\int_{0}^{t}\int_{D}\int_{D\setminus D_{w}}\nu(z-w)\left|P_{t-s}^{D}f(z)-f(z)\right||f(w)|\,dz\,dw\,ds\\
&\leq\frac{1}{t}\int_{0}^{t}\sqrt{\int_{D}\int_{D\setminus D_{w}} \hspace*{-3ex} \nu(z-w)\left|P_{s}^{D}f(w)-f(w)\right|^{2}\,dz\,dw}\sqrt{\int_{D}\int_{D\setminus D_{w}}\hspace*{-3ex}\nu(z-w)|P_{t-s}^{D}f(z)|^{2}\,dz\,dw}\,ds\\
&\quad +\frac{1}{t}\int_{0}^{t}\sqrt{\int_{D}\int_{D\setminus D_{w}}\hspace*{-3ex}\nu(z-w)\left|P_{t-s}^{D}f(z)-f(z)\right|^{2}\,dz\,dw}\sqrt{\int_{D}\int_{D\setminus D_{w}}\hspace*{-3ex}\nu(z-w)|f(w)|^{2}\,dz\,dw}\,ds\\
&\lesssim\frac{1}{t}\int_{0}^{t}\sqrt{\mathcal{E}^{D}[P^{D}_{s}f-f]\mathcal{E}^{D}[P_{t-s}f]}\,ds+\frac{1}{t}\int_{0}^{t}\sqrt{\mathcal{E}^{D}[P_{t-s}^{D}f-f]\mathcal{E}^{D}[f]}\,ds\rightarrow 0,
\end{align*}
}
as $t\rightarrow 0^+$, because the functions $s\mapsto\mathcal{E}^{D}[P^{D}_{s}f-f]\mathcal{E}^{D}[P_{t-s}f]$ and $\,s\mapsto\mathcal{E}^{D}[P_{s}f-f]$ are continuous and vanish at $s=0$.

This proves \eqref{eq:rbf}. We also note that by Theorem~\ref{thm:intens-diff-bd}
and the Hardy inequality \eqref{eq:Hi},
\begin{align}\label{eq:sk}
&\int_{D}\int_{D\setminus D_{w}}|f(w)\nu(z-w)f(z)|\,dz\,dw
\leq 
\frac12 \int_{D}\int_{D\setminus D_{w}} (f(w)^2+f(z)^2)\nu(z-w)\,dz\,dw
\\
&\leq 
c\int_{D}f(w)^2 \kappa_D(w)\,dw\leq c \mathcal{E}^{D}[f]<\infty.
\qedhere
\end{align}
\end{proof}

\medskip

Here and below, for $w \in D$ and $A \subset \R^d$,
we let $\nu(w,A) = \int_{A} \nu(z-w) dz$. We next propose and verify the following alternative to \eqref{eq:rbf}:
\begin{equation}\label{eq:rDfkp}
\hat{\mathcal{E}}^{D}[f]=\frac{1}{2}\iint\limits_{[z,w]\subset D}
(f(w)-f(z))^{2}\nu(z-w)\,dw\,dz+\int_{D}f^{2}(w)\nu(w,D_{w}^{c})\,dw.
\end{equation}

The first term on the right-hand side of \eqref{eq:rDfkp} can be thought of as coming from jumps, and the second is due to shooting down. 
To prove \eqref{eq:rDfkp} we argue as follows. Since $\Rd=D\cup D^c$ and $f=0$ on $D^{c}$,
$$\mathcal{E}^{D}[f]=\frac{1}{2}\int_{D}\int_{D}(f(w)-f(z))^{2}\nu(z-w)\,dw\,dz+\int_{D}f^{2}(w)\nu(w,D^{c})\,dw.$$
Therefore, by \eqref{eq:rbf}, 
{\allowdisplaybreaks
\begin{align*}
\hat{\mathcal{E}}^{D}[f]&=\frac{1}{2}\int_{D}\int_{D}(f(w)-f(z))^{2}\nu(z-w)\,dw\,dz+\int_{D}f^{2}(w)\nu(w,D^{c})\,dw\\
&\quad +\int_{D}\int_{D\setminus D_{w}}f(w)f(z)\nu(z-w)\,dz\,dw\\
&=\frac{1}{2}\int_{D}\int_{D_{w}}(f(w)-f(z))^{2}\nu(z-w)\,dw\,dz+\frac{1}{2}\int_{D}\int_{D\setminus D_{w}}(f(w)-f(z))^{2}\nu(z-w)\,dw\,dz\\
&\quad +\int_{D}f^{2}(w)\nu(w,D^{c})\,dw+\int_{D}\int_{D\setminus D_{w}}f(w)f(z)\nu(z-w)\,dz\,dw\\
&=\frac{1}{2}\iint\limits_{[z,w]\subset D}(f(w)-f(z))^{2}\nu(z-w)\,dw\,dz+\frac{1}{2}\int_{D}\int_{D\setminus D_{w}}\left(f^{2}(w)+f^{2}(z)\right)\nu(z-w)\,dw\,dz\\
&\quad +\int_{D}f^{2}(w)\nu(w,D^{c})\,dw,
\end{align*}
}which yields \eqref{eq:rDfkp}.
The steps above are justified by the absolute convergence of the integrals $\int_{D}\int_{D\setminus D_{w}}f(w)f(z)\nu(z-w)\,dw\,dz$ and $\int_{D}\int_{D\setminus D_{w}}f^{2}(w)\nu(z-w)\,dw\,dz$, see \eqref{eq:sk}.

\section{The Green function}\label{sec:Green}

\noindent
We recall the definition of the Green function for the killed process \cite[Section 2.3]{MR1329992}:
$$G_D(x,y) = \int \limits_{0}^{\infty} \pD(t,x,y)\,dt, \text{ } x,y \in \Rd.$$
\noindent
Analogously we define the Green function for the shot-down process.
\begin{definition}
$\hat{G}_D(x,y) = \int \limits_{0}^{\infty} \phD(t,x,y)\,dt$, $x,y \in \Rd$.
\end{definition}

{We next derive} a perturbation formula, Hunt formula, scaling, and positivity.  
\begin{lemmaf} \label{fact:Green_Pert}
$$\hat{G}_{D}(x,y)=G_{D}(x,y)-\int_{D}\int_{D\setminus D_{w}}\hat{G}_{D}(x,w)\nu(z-w)G_{D}(z,y)\,dz\,dw, \quad x, y \in \Rd.$$
\end{lemmaf}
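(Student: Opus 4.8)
The plan is to integrate the Duhamel-type (nonlocal perturbation) formula of Corollary~\ref{fact:huntIke} in time. Recall that for Lipschitz $D$, $x,y\in\Rd$ and $t>0$,
$$\phD(t, x, y)  = \pD(t,x,y) - \int_0^t \int_{D} \int_{D\setminus D_w}   \phD(s,x,w)\, \nu(z-w)\, \pD(t-s, z,y)\,dz\,dw\,ds.$$
Integrating both sides with respect to $t$ over $(0,\infty)$ and using the definitions $\hat G_D(x,y)=\int_0^\infty \phD(t,x,y)\,dt$ and $G_D(x,y)=\int_0^\infty \pD(t,x,y)\,dt$, the first term on the right produces $G_D(x,y)$. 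For the second term, I would apply Tonelli's theorem (all integrands are nonnegative, since $\phD,\pD,\nu\ge 0$) to move the $dt$-integral inside, and then perform the change of variables $(s,t)\mapsto(s,u)$ with $u=t-s$, so that $\int_0^\infty\!\!\int_0^t \phD(s,x,w)\pD(t-s,z,y)\,ds\,dt = \int_0^\infty\!\!\int_0^\infty \phD(s,x,w)\pD(u,z,y)\,ds\,du = \hat G_D(x,w)\,G_D(z,y)$. This yields exactly
$$\hat G_D(x,y)=G_D(x,y)-\int_D\int_{D\setminus D_w}\hat G_D(x,w)\,\nu(z-w)\,G_D(z,y)\,dz\,dw.$$

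The one genuine subtlety to address is finiteness: the manipulation is automatically valid in $[0,\infty]$ by Tonelli, but to have a meaningful (nontrivial, non-$\infty$) identity one should check that the quantities involved are finite. For a bounded $C^{1,1}$ set $D$, $G_D(x,y)$ is finite for $x\ne y$ and the classical sharp Green function estimates for the fractional Laplacian are available; moreover $\hat G_D\le G_D$ by Corollary~\ref{cor:comp}, and the double integral $\int_D\int_{D\setminus D_w}\hat G_D(x,w)\nu(z-w)G_D(z,y)\,dz\,dw$ is dominated by a constant times $\int_D G_D(x,w)\kappa_D(w)\,dw\cdot(\text{something bounded})$ type expression once one invokes Theorem~\ref{thm:intens-diff-bd} (which controls $\nu(w,D\setminus D_w)=\iota_D(w)-\kappa_D(w)$); alternatively one simply notes that the whole right-hand side must equal the finite quantity $G_D(x,y)-\hat G_D(x,y)\ge 0$, so no independent finiteness argument is even needed. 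I would make this last remark explicit: since $0\le\hat G_D\le G_D<\infty$ off the diagonal, the subtracted double integral is automatically finite and equals $G_D(x,y)-\hat G_D(x,y)$.

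The only potential obstacle is the exchange of the order of integration and the change of variables, but with nonnegative integrands Tonelli makes this routine; there is no measurability issue since $\phD$ and $\pD$ are jointly measurable (indeed continuous in the spatial variables on $D$ and measurable in $t$). Thus the proof is essentially a one-line integration of Corollary~\ref{fact:huntIke}, and I would present it as such, flagging Tonelli, the substitution $u=t-s$, and the finiteness remark as the only points requiring a word of justification.
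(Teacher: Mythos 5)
Your proposal is correct and is essentially identical to the paper's proof: the paper likewise integrates Corollary~\ref{fact:huntIke} in $t$, applies Fubini--Tonelli, and factors the convolution in time into $\hat{G}_D(x,w)\,G_D(z,y)$. Your extra remark on finiteness is a harmless (and reasonable) addition that the paper omits.
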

\begin{proof}
By Corollary~\ref{fact:huntIke}, Fubini-Tonelli, and the above definitions,
\begin{align*}
    G_{D}(x,y) &=\hat{G}_{D}(x,y) -\int_{0}^{\infty}\int_{D}\int_{D\setminus D_{w}}\int_{0}^{t}\phD(s,x,w)\nu(z-w)\pD(t-s,z,y)\,ds\,dz\,dw\,dt\\
    &= \hat{G}_{D}(x,y) - \int_{D}\int_{D\setminus D_w}\int_{0}^{\infty}\phD(s,x,w)\nu(z-w)\,ds \int_{{s}}^{\infty}\,\pD(t-s,z,y)\,{dt}\,dz\,dw\\
    &= G_{D}(x,y)-\int_{D}\int_{D\setminus D_{w}}\hat{G}_{D}(x,w)\nu(z-w)G_{D}(z,y)\,dz\,dw.\qquad\qedhere
\end{align*}
\end{proof}

\begin{lemma}\label{lem:mnvG}
	For open $U\subset D$,
	\begin{align*}
		\hat{G}_D(x,y) = \hat{G}_U(x,y) + \E_x\left[[X_{\sigma_U-},X_{\sigma_U}]\subset D; \hat{G}_D(X_{\sigma_U},y)\right].
	\end{align*}
\end{lemma}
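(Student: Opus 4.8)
The plan is to integrate the Hunt-type identity for heat kernels, Lemma~\ref{lem:Hffssd}, over $t\in(0,\infty)$ and to recognise the resulting quantities as Green functions. Recall that for open $U\subset D$ and $x,y\in\Rd$,
\[
\pha_U(t,x,y)=\phD(t,x,y)-\E_x\big[\sigma_U<t,\,[X_{\sigma_U-},X_{\sigma_U}]\subset D;\,\phD(t-\sigma_U,X_{\sigma_U},y)\big].
\]
All three terms are nonnegative, so I may integrate termwise in $t$ and apply Tonelli's theorem freely, irrespective of finiteness. The left-hand side integrates to $\hat{G}_U(x,y)$ by definition, and the first term on the right integrates to $\hat{G}_D(x,y)$.

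For the subtracted term, Tonelli gives
\[
\int_0^\infty\E_x\big[\sigma_U<t,\,[X_{\sigma_U-},X_{\sigma_U}]\subset D;\,\phD(t-\sigma_U,X_{\sigma_U},y)\big]\,dt
=\E_x\Big[[X_{\sigma_U-},X_{\sigma_U}]\subset D;\,\int_{\sigma_U}^{\infty}\phD(t-\sigma_U,X_{\sigma_U},y)\,dt\Big].
\]
On the event $\{[X_{\sigma_U-},X_{\sigma_U}]\subset D\}$ one necessarily has $\sigma_U<\infty$ and $X_{\sigma_U}\in D$, so the substitution $s=t-\sigma_U$ yields $\int_{\sigma_U}^{\infty}\phD(t-\sigma_U,X_{\sigma_U},y)\,dt=\int_0^\infty\phD(s,X_{\sigma_U},y)\,ds=\hat{G}_D(X_{\sigma_U},y)$. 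Collecting the three pieces,
\[
\hat{G}_U(x,y)=\hat{G}_D(x,y)-\E_x\big[[X_{\sigma_U-},X_{\sigma_U}]\subset D;\,\hat{G}_D(X_{\sigma_U},y)\big],
\]
and rearranging gives the statement.

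The only routine points to verify are the joint measurability of $\phD$ (so that $\omega\mapsto\hat{G}_D(X_{\sigma_U}(\omega),y)$ is measurable, being a Tonelli integral of a measurable function) and the validity of Lemma~\ref{lem:Hffssd} for the relevant $x,y$; there is no genuine obstacle here, since nonnegativity legitimises every interchange of the order of integration. Alternatively, one could bypass Lemma~\ref{lem:Hffssd} and reprove the identity directly from the strong Markov property at $\sigma_U$ in exactly the way Lemma~\ref{lem:Hffssd} is proved, but integrating the heat-kernel formula is the shortest route.
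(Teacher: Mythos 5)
Your proposal is correct and follows essentially the same route as the paper: integrate the identity of Lemma~\ref{lem:Hffssd} over $t\in(0,\infty)$, use Fubini--Tonelli (justified by nonnegativity) to pull the $t$-integral inside the expectation, and substitute $s=t-\sigma_U$ to recognise $\hat{G}_D(X_{\sigma_U},y)$. The only cosmetic difference is that the paper writes the identity in the additive form $\phD=\pha_U+\E_x[\cdots]$ before integrating, which avoids even the appearance of subtracting possibly infinite quantities.
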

\begin{proof}
	By Lemma \ref{lem:Hffssd},
	\begin{align*}
		\hat{G}_D(x,y) &= \hat{G}_U(x,y) + \int_0^\infty \E_x\left[\sigma_U<t, [X_{\sigma_U-},X_{\sigma_U}]\subset D; \phD(t-\sigma_U,X_{\sigma_U},y)\right]\,dt\\
		&=\hat{G}_U(x,y) + \E_x\left[[X_{\sigma_U-},X_{\sigma_U}]\subset D; \int_{\sigma_U}^\infty\phD(t-\sigma_U,X_{\sigma_U},y)\,dt\right]\\
		&=\hat{G}_U(x,y) + \E_x\left[[X_{\sigma_U-},X_{\sigma_U}]\subset D; \hat{G}_D(X_{\sigma_U},y)\right]. \qquad\qedhere
	\end{align*}
\end{proof}

\begin{lemma}\label{lem:sGf}
For $x, y\in\Rd,\ r>0$,
$$\hat{G}_{rD}(rx,ry) = r^{\alpha-d}\int_{0}^{\infty}\phD(s,x,y)\,ds = r^{\alpha-d}\hat{G}_D(x,y).$$
\end{lemma}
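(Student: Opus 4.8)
The plan is to derive this directly from the scaling identity for the heat kernel established in Lemma~\ref{lem:schk}, namely $r^{d}\hat p_{rD}(r^{\alpha}t,rx,ry)=\hat p_D(t,x,y)$ for $r>0$, $x,y\in\Rd$. Everything reduces to a single substitution in the defining integral of the Green function.

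First I would write out the definition,
\[
\hat G_{rD}(rx,ry)=\int_{0}^{\infty}\hat p_{rD}(t,rx,ry)\,dt,
\]
and then change variables $t=r^{\alpha}s$, so that $dt=r^{\alpha}\,ds$ and the limits of integration are unchanged (since $r^\alpha>0$). This gives
\[
\hat G_{rD}(rx,ry)=r^{\alpha}\int_{0}^{\infty}\hat p_{rD}(r^{\alpha}s,rx,ry)\,ds .
\]
Next I would invoke Lemma~\ref{lem:schk} in the form $\hat p_{rD}(r^{\alpha}s,rx,ry)=r^{-d}\hat p_D(s,x,y)$, substitute it into the integrand, and pull the constant $r^{-d}$ out of the integral, obtaining
\[
\hat G_{rD}(rx,ry)=r^{\alpha-d}\int_{0}^{\infty}\hat p_D(s,x,y)\,ds=r^{\alpha-d}\hat G_D(x,y),
\]
where the last equality is just the definition of $\hat G_D$.

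I do not anticipate any genuine obstacle here: the only point to be mildly careful about is that the change of variables and the interchange implicit in writing $\int_0^\infty r^{-d}\hat p_D(s,x,y)\,ds$ are justified because $\hat p_D\ge 0$ (Tonelli, or simply that $r^{-d}$ is a nonnegative constant), so the identity holds as an equality in $[0,\infty]$; in particular it is valid whether or not $\hat G_D(x,y)$ is finite. This mirrors the analogous scaling of $G_D$ for the killed process, and the same argument would give it.
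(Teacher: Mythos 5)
Your proof is correct and is essentially identical to the paper's: both substitute $t=r^{\alpha}s$ in the defining integral and then apply the heat-kernel scaling identity of Lemma~\ref{lem:schk}. Your added remark about nonnegativity justifying the manipulation in $[0,\infty]$ is a harmless extra precaution.
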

\begin{proof}
By Lemma~\ref{lem:schk},
\begin{align*}
\hat{G}_{rD}(rx,ry)&=\int_{0}^{\infty}\hat{p}_{rD}(t,rx,ry)\,dt=r^{\alpha}\int_{0}^{\infty}\pha_{rD}(r^{\alpha}s,rx,ry)\,ds\\
&=r^{\alpha-d}\int_{0}^{\infty}\phD(s,x,y)\,ds=r^{\alpha-d}\hat{G}_{D}(x,y).
\qquad\qedhere
\end{align*}
\end{proof}
\begin{lemma}\label{cor:pGf}
$\hat{G}_{D}(x,y)>0$ for $x,y\in D$.
\end{lemma}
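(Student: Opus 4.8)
The plan is to obtain the statement as an immediate consequence of the pointwise strict positivity of the heat kernel already established in Lemma~\ref{lem:phk}. Fix $x,y\in D$. By definition, $\hat{G}_D(x,y)=\int_0^\infty \phD(t,x,y)\,dt$, and by Lemma~\ref{lem:phk} the integrand $t\mapsto\phD(t,x,y)$ is strictly positive on $(0,\infty)$. It is moreover Borel measurable in $t$: this is clear from Hunt's formula (Definition~\ref{def_hunt}), since $t\mapsto p(t,x,y)$ is continuous and, for each fixed path, $t\mapsto\1_{\{\sigma_D<t\}}p(t-\sigma_D,X_{\sigma_D},y)$ is right-continuous off $\{\sigma_D=t\}$, so the expectation defining $\phD$ is a measurable function of $t$.

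It then remains only to record the elementary fact that a nonnegative Borel function $g$ on $(0,\infty)$ which is strictly positive everywhere has $\int_0^\infty g\,dt>0$. Indeed, $[1,2]=\bigcup_{n\ge1}\bigl([1,2]\cap\{t:g(t)>1/n\}\bigr)$, so for some $n$ the set $[1,2]\cap\{t:g(t)>1/n\}$ has positive Lebesgue measure $\varepsilon>0$, whence $\int_0^\infty g\,dt\ge\varepsilon/n>0$. Applying this with $g(t)=\phD(t,x,y)$ gives $\hat{G}_D(x,y)>0$, as desired.

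There is essentially no obstacle here; the only points requiring a word are the measurability of $t\mapsto\phD(t,x,y)$ and the positive-integrand fact just stated, both entirely routine. As an alternative that bypasses Lemma~\ref{lem:phk}, one could fix a ball $B\subset D$ containing $x$, use $\sigma_B\le\sigma_D$ together with Lemma~\ref{lem:mnvG} (the omitted term being nonnegative) to get $\hat{G}_D(x,y)\ge\hat{G}_B(x,y)$, note $\hat{G}_B=G_B$ since $B$ is convex, and invoke strict positivity of the Dirichlet Green function $G_B$ on $B$; a chain of overlapping balls joining $x$ to $y$, as in the proof of Lemma~\ref{lem:phk}, handles general $x,y\in D$. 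The direct argument above is shorter and is the one I would use.
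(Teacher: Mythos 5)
Your proof is correct and follows the same route as the paper, which simply deduces the lemma from the strict positivity of $\phD(t,x,y)$ established in Lemma~\ref{lem:phk}. Your additional remarks on measurability in $t$ and on the positivity of the integral of an everywhere positive Borel function are routine fillers of details the paper leaves implicit.
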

\begin{proof}
The result follows from Lemma~\ref{lem:phk}.
\end{proof}

\subsection{Harnack lower bound}
{Motivated by Lemma~\ref{lem:mnvG}, we propose the following.}
\begin{definition}\label{def:mvp}
Consider open $U \subset D$. We say that $u: D \to [0,\infty]$ has the mean value property ($mvp$) for $\hat{X}^{{D}}$ inside $U$ if
$$u(x) = \E_x\{[X_{\sigma_B-}, X_{\sigma_B}] \subset D; u(X_{\sigma_B})\}, \quad x \in B \subset\subset U,$$
and we say that $u$ has the supermean value property ($smvp$) if 
$$u(x) \geq \E_x\{[X_{\sigma_B-}, X_{\sigma_B}] \subset D; u(X_{\sigma_B})\}, \quad x \in B \subset\subset U.$$
\end{definition}
\noindent
Without loosing generality in Definition~\ref{def:mvp} we may let $u=0$ on $D^c$. {Note that $\E_x\{[X_{\sigma_B-}, X_{\sigma_B}] \subset D; u(X_{\sigma_B})\}=\E_x[\sigma_B<\sigma_D; u(X_{\sigma_B})]=\E_x[u(\hat{X}^D_{\sigma_B})]$. }

\begin{lemma}\label{lem:green_mvp}
$u(x) = \hat{G}_D(x,y)$ has $smvp$ for $\hat{X}^{{D}}$ on $D$ and $mvp$ for $\hat{X}^{{D}}$ on $D \setminus \{y\}$. 
\end{lemma}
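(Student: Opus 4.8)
The plan is to deduce both assertions directly from Lemma~\ref{lem:mnvG}, with essentially no extra work. Throughout, I fix $y\in\Rd$ and write $u=\hat{G}_D(\cdot,y)$, extended by $0$ on $D^c$ as permitted after Definition~\ref{def:mvp}; note that $u\ge 0$ and that Lemma~\ref{lem:mnvG} is an identity in $[0,\infty]$, so the possible value $u(y)=\hat{G}_D(y,y)=\infty$ will be harmless.

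\emph{The $smvp$ on $D$.} Given a ball $B\subset\subset D$ and $x\in B$, I would apply Lemma~\ref{lem:mnvG} with $U=B$ to get
$$
\hat{G}_D(x,y)=\hat{G}_B(x,y)+\E_x\big[[X_{\sigma_B-},X_{\sigma_B}]\subset D;\ \hat{G}_D(X_{\sigma_B},y)\big].
$$
Since $\hat{G}_B(x,y)=\int_0^\infty \hat{p}_B(t,x,y)\,dt\ge 0$, dropping this term yields
$$
\hat{G}_D(x,y)\ \ge\ \E_x\big[[X_{\sigma_B-},X_{\sigma_B}]\subset D;\ \hat{G}_D(X_{\sigma_B},y)\big],
$$
which is precisely the $smvp$ for $\hat{X}_D$ on $D$ (valid in $[0,\infty]$, so nothing special happens when $x=y$).

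\emph{The $mvp$ on $D\setminus\{y\}$.} Next I would take a ball $B\subset\subset D\setminus\{y\}$ and $x\in B$, so that $y\notin\overline{B}$. In view of the displayed identity it suffices to check that $\hat{G}_B(x,y)=0$, i.e.\ that $\hat{p}_B(t,x,y)=0$ for every $t>0$. By Corollary~\ref{cor:comp}, $0\le\hat{p}_B(t,x,y)\le p_B(t,x,y)$, so it is enough to show $p_B(t,x,y)=0$. Since $\Rd\setminus\overline{B}$ is an open neighbourhood of $y$, right-continuity of the paths of $X$ forces $\tau_B=0$ $\PP_y$-a.s., and then Hunt's formula \eqref{eq:Hfk} gives
$$
p_B(t,y,x)=p(t,y,x)-\E_y\big[\tau_B<t;\ p(t-\tau_B,X_{\tau_B},x)\big]=p(t,y,x)-p(t,y,x)=0.
$$
As $B$ is convex, $\sigma_B=\tau_B$ and $\hat{p}_B=p_B$, so $p_B$ is symmetric by Theorem~\ref{thm:symmetry}, whence $p_B(t,x,y)=p_B(t,y,x)=0$. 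Therefore $\hat{p}_B(t,x,y)=0$, $\hat{G}_B(x,y)=0$, and Lemma~\ref{lem:mnvG} collapses to
$$
\hat{G}_D(x,y)=\E_x\big[[X_{\sigma_B-},X_{\sigma_B}]\subset D;\ \hat{G}_D(X_{\sigma_B},y)\big],\qquad x\in B,
$$
the $mvp$ for $\hat{X}_D$ on $D\setminus\{y\}$.

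I do not expect a genuine obstacle here: both properties fall out of Lemma~\ref{lem:mnvG} as soon as one observes that $\hat{G}_B$ is nonnegative and vanishes off $\overline{B}$. The only step deserving a word of justification is the vanishing $\hat{G}_B(x,y)=0$ for $y\notin\overline{B}$, which rests on the classical fact that $X$ started from outside $\overline{B}$ immediately enters $B^c$ (equivalently, that boundary points of a ball are regular for $X$), together with the symmetry of the Dirichlet heat kernel of the ball established via Theorem~\ref{thm:symmetry}.
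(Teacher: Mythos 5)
Your proof is correct and follows essentially the same route as the paper: both parts are read off from Lemma~\ref{lem:mnvG}, using $\hat{G}_B\ge 0$ for the $smvp$ and the vanishing $\hat{G}_B(x,y)=0$ for $y\notin \overline{B}$ for the $mvp$. The paper simply quotes $\hat{G}_B(x,y)\le G_B(x,y)=0$ since ${\rm dist}(y,B)>0$, whereas you verify this via Hunt's formula and symmetry; this is only a more detailed justification of the same step.
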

\begin{proof}
Let $y\in \Rd$. From Lemma~\ref{lem:mnvG} we get $smvp$ of $u$.
Then for $B\subset\subset D\setminus \{y\}$, we get $\hat{G}_B(x,y)\le G_B(x,y)=0$, because ${\rm dist}(y,B)>0$.
By Lemma~\ref{lem:mnvG}, 
$\hat{G}_D(x,y)=\E_x \left[[X_{\sigma_B-},X_{\sigma_B}]\subset D; \hat{G}_D(X_{\sigma_B})\right]$.
\end{proof}
\noindent
  The following variant of Harnack inequality is somewhat unusual because infima are used on both sides of 
\eqref{eq:har}.
\begin{proposition}[Harnack lower bound] \label{Harnack_lower}
Let $n>1$. Let $\LL =[x_1, x_2] \cap [x_2, x_3] \cap ... \cap [x_{n-1}, x_n]\subset D$ be a polygonal chain in $D$. Let $\rho \in (0, \text{dist}(\LL , D^c))$.  Let $u \geq 0$ have  the $smvp$ for $\hat{X}^{{D}}$ inside $B(x_i, \rho)$ for each $i=2,3,...,n$.  Let $\varepsilon \in (0,1)$.  Then there is a constant $C=C(d,\alpha, \LL , \rho, \varepsilon)$ such that
\begin{equation}
\label{eq:har}
    \inf_{y \in B(x_n, (1-\varepsilon)\rho)} u(y) \geq C \inf_{x \in B(x_1, \varepsilon\rho)}u(x).
\end{equation}
\end{proposition}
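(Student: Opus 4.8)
The plan is to distill a one-step estimate along a single link of the chain and then iterate it over the vertices $x_n,x_{n-1},\dots ,x_1$; throughout we keep the convention $u=0$ on $D^c$. We may assume $\varepsilon\le 1/2$: for $\varepsilon>1/2$ the claim follows from the case $\varepsilon=1/2$, since $B(x_n,(1-\varepsilon)\rho)\subset B(x_n,\rho/2)$ and $B(x_1,\rho/2)\subset B(x_1,\varepsilon\rho)$, so $\inf_{B(x_n,(1-\varepsilon)\rho)}u\ge\inf_{B(x_n,\rho/2)}u\ge C\inf_{B(x_1,\rho/2)}u\ge C\inf_{B(x_1,\varepsilon\rho)}u$.

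\emph{One-step estimate.} The key claim is: if $a,b\in D$ with $[a,b]\subset D$ and $0<\rho<{\rm dist}([a,b],D^c)$, if $\varepsilon\in(0,1/2]$, and if $v\ge 0$ has the $smvp$ for $\hat X_D$ inside $B(b,\rho)$, then there is $c=c(d,\alpha,\varepsilon,\rho,|a-b|)>0$ with $\inf_{B(b,(1-\varepsilon)\rho)}v\ge c\,\inf_{B(a,\varepsilon\rho)}v$. To prove it, fix $y\in B(b,(1-\varepsilon)\rho)$, put $r=\varepsilon\rho/4$, $B=B(y,r)$, and $A=B(a,\varepsilon\rho)\setminus\overline B$; note $\overline B\subset B(b,(1-\tfrac34\varepsilon)\rho)\subset\subset B(b,\rho)$, and $\sigma_B=\tau_B$ since $B$ is convex. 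The key geometric fact is that on $\{X_{\tau_B}\in A\}$ one has $[X_{\tau_B-},X_{\tau_B}]\subset D$: indeed $X_{\tau_B-}\in B$ a.s.\ (the $\alpha$-stable process does not creep onto $\partial B$; cf.\ Lemma~\ref{l.nhb}), the map $x\mapsto{\rm dist}(x,[a,b])$ is convex, and for $p\in B$, $q\in A$ every point of $[p,q]$ lies within $\max\{|p-b|,|q-a|\}<\max\{(1-\tfrac34\varepsilon)\rho,\varepsilon\rho\}<\rho<{\rm dist}([a,b],D^c)$ of $[a,b]$, hence in $D$. Using the $smvp$, $v\ge 0$, and $A\subset B(a,\varepsilon\rho)$ we get
\begin{align*}
v(y)&\ \ge\ \E_y\big[[X_{\tau_B-},X_{\tau_B}]\subset D;\ v(X_{\tau_B})\big]\ \ge\ \E_y\big[X_{\tau_B}\in A;\ v(X_{\tau_B})\big]\\
&\ \ge\ \big(\inf_{B(a,\varepsilon\rho)}v\big)\,\PP_y\big(X_{\tau_B}\in A\big).
\end{align*}

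\emph{Lower bound for the exit probability.} By the Ikeda--Watanabe formula for the ball $B$ (which, being convex, gives shot down $=$ killed; cf.\ Theorem~\ref{I-W} and Remark~\ref{r.nhb}), for $A\subset(\overline B)^c$,
\begin{align*}
\PP_y\big(X_{\tau_B}\in A\big)=\int_A\int_B G_B(y,w)\,\nu(z-w)\,dw\,dz\ \ge\ |A|\,\E_y\tau_B\inf_{w\in B,\,z\in A}\nu(z-w).
\end{align*}
Here $\E_y\tau_B=\E_0\tau_{B(0,r)}=c_{d,\alpha}r^\alpha$ by translation and scaling invariance of $X$; $|A|\ge|B(0,\varepsilon\rho)|-|B(0,\varepsilon\rho/4)|\ge c_d(\varepsilon\rho)^d$; and for $w\in B$, $z\in A$ one has $|z-w|\le|a-b|+2\rho=:R$, so $\nu(z-w)\ge\mathcal A_{d,\alpha}R^{-d-\alpha}$. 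Multiplying these bounds gives $\PP_y(X_{\tau_B}\in A)\ge c(d,\alpha,\varepsilon,\rho,|a-b|)>0$; taking the infimum over $y\in B(b,(1-\varepsilon)\rho)$ proves the one-step estimate.

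\emph{Iteration and conclusion.} Apply the one-step estimate with $(a,b)=(x_{i-1},x_i)$ and $v=u$ for $i=n,n-1,\dots,2$; this is legitimate because $[x_{i-1},x_i]\subset\LL\subset D$, $\rho<{\rm dist}(\LL,D^c)\le{\rm dist}([x_{i-1},x_i],D^c)$, and $u$ has the $smvp$ inside $B(x_i,\rho)$. Using also $B(x_{i-1},\varepsilon\rho)\subset B(x_{i-1},(1-\varepsilon)\rho)$ (valid since $\varepsilon\le1/2$), hence $\inf_{B(x_{i-1},\varepsilon\rho)}u\ge\inf_{B(x_{i-1},(1-\varepsilon)\rho)}u$, we chain
\begin{align*}
\inf_{B(x_n,(1-\varepsilon)\rho)}u\ \ge\ c_n\inf_{B(x_{n-1},(1-\varepsilon)\rho)}u\ \ge\ \cdots\ \ge\ \Big(\prod_{i=2}^{n}c_i\Big)\inf_{B(x_1,\varepsilon\rho)}u,
\end{align*}
with $c_i=c(d,\alpha,\varepsilon,\rho,|x_{i-1}-x_i|)$, so $C:=\prod_{i=2}^{n}c_i>0$ depends only on $d,\alpha,\varepsilon,\rho$ and the vertices of $\LL$, as required.

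\emph{Main obstacle.} The only delicate point is that one cannot simply let the process exit the large ball $B(x_i,\rho)$ and land near $x_{i-1}$: if consecutive vertices are closer than $\rho$, the target ball $B(x_{i-1},\varepsilon\rho)$ may be swallowed by $B(x_i,\rho)$, making that exit event null. Exiting the small ball $B(y,\varepsilon\rho/4)$ instead repairs this uniformly, at the cost of letting the constant depend on $|x_{i-1}-x_i|$, which is allowed. Everything else is routine: the convexity argument that the connecting segment stays in $D$, and the fact that the $\alpha$-stable exit density of a ball is bounded below on a fixed-size portion of the target ball.
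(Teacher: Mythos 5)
Your proposal is correct and follows essentially the same route as the paper: reduce to $\varepsilon\le 1/2$, prove a one-step estimate along each segment by exiting the small ball $B(y,\varepsilon\rho/4)$ into a portion of $B(x_{i-1},\varepsilon\rho)$ while the connecting segment stays in the convex $\rho$-neighborhood of $[x_{i-1},x_i]$, and then iterate over the chain. The only (harmless) variations are that you land in the set $B(a,\varepsilon\rho)\setminus\overline{B}$ rather than a fixed small ball $B'$, and you bound the exit probability via Ikeda--Watanabe and $\E_y\tau_B$ instead of the explicit Riesz formula.
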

\noindent
The proof of Proposition~\ref{Harnack_lower} is given after the following auxiliary result.
\begin{lemma}
\label{lemma145}
Let $d \in \{1,2,...\}, \alpha \in (0,2), \rho \in (0,\infty), l \in [0, \infty), \varepsilon \in (0,1)$.  Let $D \subset \R^d$ be a (nonempty) domain. Let $x_1, x_2 \in D$ be such that the line segment $L=[x_1, x_2]$ is contained in $D$, ${\rm dist}(L, D^c) \geq \rho$ and $|x_1-x_2| = l$. Assume that $u: D \to [0,\infty]$ has $smvp$ inside $B(x_2, \rho)$ for the shot-down process $\hat{X}^D$ on $D$. Then,
\begin{equation}
    \inf_{y \in B(x_2, (1-\varepsilon) \rho)} u(y) \geq C \inf_{x \in B(x_1, \varepsilon\rho)} u(x),
\end{equation}
where $C=C(d, \alpha, l/\rho, \varepsilon) \in (0,1)$ may be chosen as nonincreasing in $l/\rho$.
\end{lemma}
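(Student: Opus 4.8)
The plan is to establish, for every target point $y\in B(x_2,(1-\varepsilon)\rho)$, the pointwise bound $u(y)\ge C\inf_{B(x_1,\varepsilon\rho)}u$ with a constant $C=C(d,\alpha,l/\rho,\varepsilon)\in(0,1)$ nonincreasing in $l/\rho$, and then to take the infimum over such $y$. The bound at $y$ will come from the $smvp$ applied \emph{at} $y$ with a small ball $B_y$ centred at $y$, after showing that the shot-down process $\hat X^{D}$ started at $y$ reaches $B(x_1,\varepsilon\rho)$ in a single jump, without being shot down, with probability at least $C$; the one-jump probability is controlled by the classical Poisson kernel of $B_y$ for $X$, which also makes the dependence on $l/\rho$ explicit.

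Two elementary geometric facts drive the argument. (a) Since $\rho<\mathrm{dist}(L,D^{c})$, for every $v$ with $\mathrm{dist}(v,L)<(1-\tfrac{3\varepsilon}{4})\rho$ and every $w\in B(x_1,\varepsilon\rho)$ the segment $[v,w]$ lies in $D$: writing a point of $[v,w]$ as $tv+(1-t)w$, $t\in[0,1]$, and comparing it with $tx_2+(1-t)x_1\in L$ gives $\mathrm{dist}\big(tv+(1-t)w,L\big)\le t|v-x_2|+(1-t)|w-x_1|<\max\!\big((1-\tfrac{3\varepsilon}{4})\rho,\varepsilon\rho\big)<\rho\le\mathrm{dist}(L,D^{c})$. (b) For every $y\in B(x_2,(1-\varepsilon)\rho)$ the ball $B(x_1,\varepsilon\rho)$ contains a ball $B(q,\varepsilon\rho/8)$ with $\varepsilon\rho/4<|z-y|<\rho+l$ for all $z\in B(q,\varepsilon\rho/8)$; one may take $q=x_1+\tfrac{3\varepsilon\rho}{4}e$ with $e$ a unit vector pointing from $y$ to $x_1$ (any unit vector if $x_1=y$), the inequalities following from $|q-x_1|=\tfrac34\varepsilon\rho$, $|q-y|\ge\tfrac34\varepsilon\rho$ and $|z-y|\le|z-x_1|+|x_1-x_2|+|x_2-y|<\varepsilon\rho+l+(1-\varepsilon)\rho$.

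Fix $y\in B(x_2,(1-\varepsilon)\rho)$ and put $B_y=B(y,\varepsilon\rho/4)$; then $\overline{B_y}\subset B\big(x_2,(1-\tfrac{3\varepsilon}{4})\rho\big)\subset\subset B(x_2,\rho)$, so the $smvp$ of $u$ inside $B(x_2,\rho)$ applies at $y$ with $B_y$. As $B_y$ is convex, $\sigma_{B_y}=\tau_{B_y}$ and $X_{\sigma_{B_y}}=X_{\tau_{B_y}}$, while $X_{\tau_{B_y}-}\in\overline{B_y}$ because $X$ has left limits and stays in $B_y$ up to $\tau_{B_y}$. Hence, on the event $\{X_{\tau_{B_y}}\in B(x_1,\varepsilon\rho)\}$, fact (a) (with $v=X_{\tau_{B_y}-}$, for which $\mathrm{dist}(v,L)\le|v-x_2|<(1-\tfrac{3\varepsilon}{4})\rho$) shows $[X_{\tau_{B_y}-},X_{\tau_{B_y}}]\subset D$, and the $smvp$ gives
\begin{equation*}
u(y)\ \ge\ \E_y\big[\,[X_{\sigma_{B_y}-},X_{\sigma_{B_y}}]\subset D;\ u(X_{\sigma_{B_y}})\,\big]\ \ge\ \Big(\inf_{B(x_1,\varepsilon\rho)}u\Big)\,\PP_y\big(X_{\tau_{B_y}}\in B(x_1,\varepsilon\rho)\big).
\end{equation*}
Since $\{X_{\tau_{B_y}}\in B(q,\varepsilon\rho/8)\}\subset\{X_{\tau_{B_y}}\in B(x_1,\varepsilon\rho)\}$, the classical Poisson kernel of $B_y$ for $X$ started at its centre, $P_{B_y}(y,z)=C_{d,\alpha}(\varepsilon\rho/4)^{\alpha}\big(|z-y|^{2}-(\varepsilon\rho/4)^{2}\big)^{-\alpha/2}|z-y|^{-d}$ for $|z-y|>\varepsilon\rho/4$, together with fact (b) (giving $|z-y|<\rho+l$ on $B(q,\varepsilon\rho/8)$), yields
\begin{equation*}
\PP_y\big(X_{\tau_{B_y}}\in B(x_1,\varepsilon\rho)\big)\ \ge\ \int_{B(q,\varepsilon\rho/8)}P_{B_y}(y,z)\,dz\ \ge\ c(d,\alpha,\varepsilon)\,(1+l/\rho)^{-d-\alpha}\ =:\ C.
\end{equation*}
Taking the infimum over $y\in B(x_2,(1-\varepsilon)\rho)$ in $u(y)\ge C\inf_{B(x_1,\varepsilon\rho)}u$ proves the lemma (the cases $\inf_{B(x_1,\varepsilon\rho)}u\in\{0,\infty\}$ being immediate); here $C=C(d,\alpha,l/\rho,\varepsilon)$ is positive, nonincreasing in $l/\rho$, and $\le 1$ after shrinking.

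The statement is a one-jump localisation of the Harnack mechanism, so no step is genuinely difficult. The only points that require attention are checking that every estimate above is uniform over $y\in B(x_2,(1-\varepsilon)\rho)$ and that the resulting constant depends on $l$ and $\rho$ solely through the ratio $l/\rho$ and decreases in it; both are transparent from the scale-invariant form of the Poisson-kernel bound, and, if preferred, one may normalise $\rho=1$ by stable scaling before starting.
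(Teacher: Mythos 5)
Your proof is correct and takes essentially the same route as the paper's: apply the smvp at $y$ with the convex ball $B(y,\varepsilon\rho/4)$ (so $\sigma=\tau$ there), observe that a jump landing in a suitably placed sub-ball of $B(x_1,\varepsilon\rho)$ cannot be shot down because the segment stays within distance $<\rho$ of $L$, and bound the one-jump probability from below by the Riesz/Poisson kernel, giving a constant $c(d,\alpha,\varepsilon)\,(\mathrm{const}+l/\rho)^{-d-\alpha}$. The differences are cosmetic: your target ball $B(q,\varepsilon\rho/8)$ beyond $x_1$ versus the paper's $B'$ of radius $\varepsilon\rho/4$ on the antipodal radius, and your explicit segment estimate in place of invoking convexity of $K=\{z:\operatorname{dist}(z,L)<\rho\}$.
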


\begin{proof}
{If $B$ is convex then}
\begin{equation}
\label{eq:eq2}
    u(y) \geq \mathbb{E}_y\{[X_{\tau_B-}, X_{\tau_B} ] {\subset}  D; u(X_{\tau_B})\} \text{ for } y\in B \subset \subset B(x_2, \rho).
\end{equation}
The set $K = \{z \in \Rd: {\rm dist}(z, L) < \rho\}$ is convex (see Figure~\ref{fig:harn_3}). Furthermore, $B(x_1, \rho), B(x_2, \rho) \subset K$ and $K \subset D$. These will be important in applications of \eqref{eq:eq2}. Let $y\in B(x_2, (1-\varepsilon)\rho)$.

 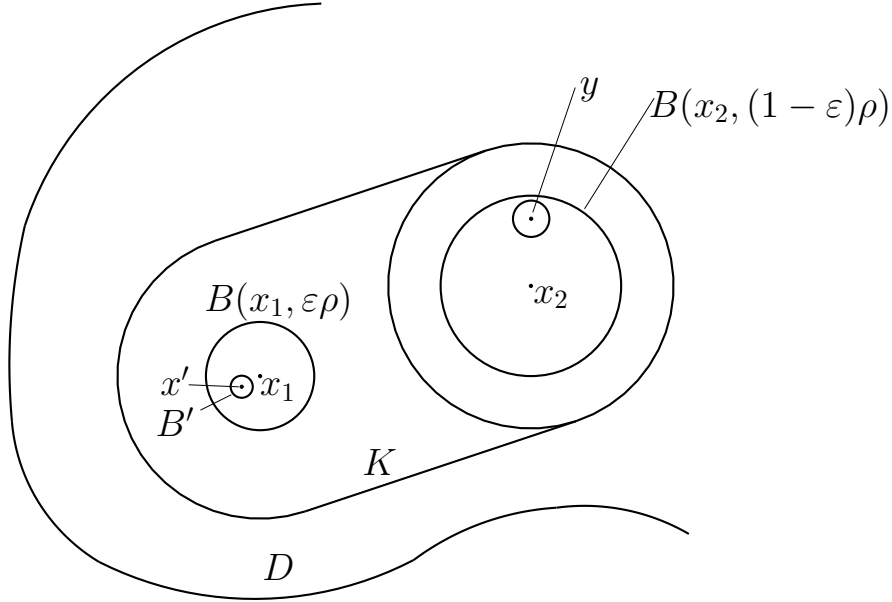
\begin{figure}[H]
   \begin{center}
     \begin{tikzpicture}[scale = 3]
\clip(-5.21890832151443,0.2) rectangle (0.24,2.907323909191167);
\draw [shift={(-3.4,1.2)},line width=0.8pt]  plot[domain=1.892546881191539:5.034139534781332,variable=\t]({1*0.6324555320336759*cos(\t r)+0*0.6324555320336759*sin(\t r)},{0*0.6324555320336759*cos(\t r)+1*0.6324555320336759*sin(\t r)});
\draw [shift={(-2.2,1.6)},line width=0.8pt]  plot[domain=-1.2490457723982544:1.8925468811915387,variable=\t]({1*0.632455532033676*cos(\t r)+0*0.632455532033676*sin(\t r)},{0*0.632455532033676*cos(\t r)+1*0.632455532033676*sin(\t r)});
\draw [line width=0.8pt] (-2.4,2.2)-- (-3.6,1.8);
\draw [line width=0.8pt] (-3.2,0.6)-- (-2,1);
\draw [shift={(-2.2,1.6)},line width=0.8pt]  plot[domain=1.892546881191539:5.034139534781332,variable=\t]({1*0.6324555320336759*cos(\t r)+0*0.6324555320336759*sin(\t r)},{0*0.6324555320336759*cos(\t r)+1*0.6324555320336759*sin(\t r)});
\draw [line width=0.8pt] (-2.2,1.6) circle (0.4cm);
\draw [shift={(-3.424909623632409,1.7392297208275325)},line width=0.8pt]  plot[domain=4.2442265370805385:5.192045550015248,variable=\t]({1*1.5276270273628552*cos(\t r)+0*1.5276270273628552*sin(\t r)},{0*1.5276270273628552*cos(\t r)+1*1.5276270273628552*sin(\t r)});
\draw [shift={(-2.0144910299910896,-0.5593333034704728)},line width=0.8pt]  plot[domain=1.6323106124043163:2.212915400076849,variable=\t]({1*1.1779344426653062*cos(\t r)+0*1.1779344426653062*sin(\t r)},{0*1.1779344426653062*cos(\t r)+1*1.1779344426653062*sin(\t r)});
\draw [shift={(-3.6983723400148714,1.0643007467757388)},line width=0.8pt]  plot[domain=3.2216340175273928:4.1688981818751305,variable=\t]({1*0.8042023944811099*cos(\t r)+0*0.8042023944811099*sin(\t r)},{0*0.8042023944811099*cos(\t r)+1*0.8042023944811099*sin(\t r)});
\draw [shift={(-1.6966773323720798,1.25)},line width=0.8pt]  plot[domain=2.920777801828741:3.230537245382804,variable=\t]({1*2.8144480771256943*cos(\t r)+0*2.8144480771256943*sin(\t r)},{0*2.8144480771256943*cos(\t r)+1*2.8144480771256943*sin(\t r)});
\draw [shift={(-3.0803112903002026,1.4187205912603238)},line width=0.8pt]  plot[domain=1.6044888413985183:2.8241060297778433,variable=\t]({1*1.4341518989145097*cos(\t r)+0*1.4341518989145097*sin(\t r)},{0*1.4341518989145097*cos(\t r)+1*1.4341518989145097*sin(\t r)});
\draw [shift={(-1.9632724705544276,-0.298266484941108)},line width=0.8pt]  plot[domain=1.0449520660228235:1.7051528902233501,variable=\t]({1*0.9229576170950841*cos(\t r)+0*0.9229576170950841*sin(\t r)},{0*0.9229576170950841*cos(\t r)+1*0.9229576170950841*sin(\t r)});
\draw [line width=0.8pt] (-3.4,1.2) circle (0.24cm);
\draw [line width=0.2pt] (-1.6536567439627972,2.4324375754645233)-- (-1.9623908874898721,1.9416153568574614);
\draw [line width=0.2pt] (-2.0035549451347197,2.479892643951931)-- (-2.1916894342225515,1.925657582283358);
\draw [line width=0.2pt] (-3.716452868222108,1.1613301744167097)-- (-3.4943343124915494,1.1523758717054675);
\draw [line width=0.2pt] (-3.6644477838976957,1.0489656174653281)-- (-3.527648939648083,1.117327816970305);
\draw [line width=0.8pt] (-2.198799421550634,1.8976785679259802) circle (0.08059895391676682cm);
\draw (-1.7251601994897274,2.5253034448255027) node[anchor=north west] {\large $B(x_2, (1-\varepsilon)\rho)$};
\draw [line width=0.8pt] (-3.481511768207807,1.152758060255926) circle (0.049cm);
\draw (-3.91,1.11) node[anchor=north west] {\large $B'$};
\draw (-2.9950925146917046,0.924726265574855) node[anchor=north west] {\large $K$};
\draw (-3.89,1.29) node[anchor=north west] {\large $x'$};

\draw (-3.695,1.665) node[anchor=north west] {\large $B(x_1, \varepsilon\rho)$};
\draw (-3.4369382417731367,0.45722771048310745) node[anchor=north west] {\large $D$};
\begin{scriptsize}
\draw [fill=black] (-3.4,1.2) circle (0.2 pt);
\draw[color=black] (-3.32,1.14) node {\large $x_1$};
\draw [fill=black] (-2.2,1.6) circle (0.2 pt);
\draw[color=black] (-2.11,1.55) node {\large $x_2$};
\draw [fill=black] (-2.198799421550634,1.8976785679259802) circle (0.2 pt);
\draw[color=black] (-1.94,2.47) node {\large $y$};
\draw [fill=black] (-3.481511768207807,1.152758060255926) 
circle (0.2 pt);
\end{scriptsize}
     \end{tikzpicture}
   \end{center}
   \caption{Situation in the proof of Lemma~\ref{lemma145} }
   \label{fig:harn_3}
 \end{figure}
\noindent
We only need to prove that $u(y) \geq C \inf_{x \in B(x_1, \varepsilon\rho)}u(x)$ for some appropriate constant $C$. To this end, we consider $B=B(y,\varepsilon\rho/4)$. Let $x'$ be the center of the radius of $B(x,\varepsilon\rho)$ antipodal to $y$. {In particular, $x', x, y$ lie on the same line,} see Figure~\ref{fig:harn_3}.
We define $B'=B(x', \varepsilon\rho/4)$. Thus, $B' \subset B(x_1, \varepsilon\rho)$ and $B'\cap B = \emptyset$.  Clearly, $B, B' \subset K \subset D$.  Then,
\begin{align*}
u(y) &{\ge} \E_y\{[X_{\tau_B-}, X_{\tau_B}] \subset D; u(X_{\tau_B})\} \geq \E_y\{X_{\tau_B} \in B'; u(X_{\tau_B})\} \\
&\geq \mathbb{P}_y(X_{\tau_B} \in B')\inf_{x \in B(x_1,\varepsilon \rho)} u(x).
\end{align*}
\noindent
By the Riesz' formula {for the Poisson kernel of the ball (see, e.g., \cite[(2.6)]{MR1438304}),}
$$\mathbb{P}_y(X_{\tau_B} \in B') = C^d_\alpha\int \limits_{|x-x'| < \varepsilon\rho/4} \frac{(\varepsilon\rho/4)^\alpha}{(|x-y|^2 - (\varepsilon\rho/4)^2)^{\alpha/2}}|x-y|^{-d}\,dx.$$
We have $|x-y| \leq 2\rho+l$ under the integral, hence
\begin{displaymath}
\mathbb{P}_y(X_{\tau_B} \in B') \geq C^d_\alpha (2\rho+l)^{-d-\alpha}(\varepsilon\rho/4)^{d+\alpha} |B(0,1)| = C^d_\alpha  \frac{\omega_d}{d} \left(\frac{\varepsilon}{4}\right)^{\alpha+d} \left(2+\frac{l}{\rho}\right)^{-d-\alpha},
\end{displaymath}
{where $\omega_d$ is the surface measure of the unit sphere in $\Rd$.}
\end{proof}

\begin{proof}[Proof of Proposition~\ref{Harnack_lower}]
$ $\newline

Let $l_i=|x_i-x_{i-1}|, i=2,3,...,n$.  Assume that $\varepsilon \in (0,\frac{1}{2}]$.  By Lemma~\ref{lemma145} and induction,
\begin{align*}
\inf_{y \in B(x_n, (1-\varepsilon)\rho)} {u(y)} &\geq C\left(d,\alpha, \frac{l_n}{\rho}, \varepsilon\right) \inf_{x \in B(x_{n-1}, \varepsilon\rho)}u(x) \geq C\left(d,\alpha, \frac{l_n}{\rho}, \varepsilon\right) \inf_{x \in B(x_{n-1}, (1-\varepsilon)\rho)}u(x)\\
&\geq C\left(d,\alpha, \frac{l_n}{\rho}, \varepsilon\right) \cdot \hdots \cdot C\left(d,\alpha, \frac{l_2}{\rho}, \varepsilon\right) \inf_{x \in B(x_{1}, (1-\varepsilon)\rho)}u(x).
\end{align*}
This proves \eqref{eq:har} for $\varepsilon < \frac{1}{2}$. For $\varepsilon \in (\frac{1}{2}, 1)$, we trivially have \begin{align*}
    \inf_{y \in B(x_n, (1-\varepsilon)\rho)} u(y) &\geq \inf_{x \in B(x_n, \rho/2)}u(x) \\
    &\geq C\left(d,\alpha, \frac{l_n}{\rho}, \frac{1}{2}\right) \cdot \hdots \cdot C\left(d,\alpha, \frac{l_2}{\rho}, \frac{1}{2}\right) \inf_{x\in B(x_1, \rho/2)}u(x) \\
    &\geq C\left(d,\alpha, \frac{l_n}{\rho}, \frac{1}{2}\right) \cdot \hdots \cdot C\left(d,\alpha, \frac{l_2}{\rho}, \frac{1}{2}\right) \inf_{x \in B(x_1,\varepsilon\rho)}u(x).
\end{align*}
The dependence of the estimate on the geometry of $\LL $ and $D$ may be detailed as follows,
$$C\left(d,\alpha, \LL , \rho, \varepsilon\right) = \left[C^d_{\alpha}\frac{\omega_d}{\alpha}\left(\frac{\varepsilon \land \frac{1}{2}}{4}\right)^{\alpha+d}\right]^n \prod \limits_{i=2}^{n} \left(2+\frac{l_i}{\rho}\right)^{-d-\alpha},$$
see the proof of Lemma~\ref{lemma145}.
\end{proof}

\subsection{Sharp estimates of the Green function}\label{subsec:Gf}

\begin{theorem}\label{thm:Green-func-estim}
Let $d \in \{1,2,...\}, \alpha \in (0,2)$ and $d>\alpha$. Let $D\neq \emptyset$ be a domain in $\Rd$. Let $\Gd$ be the Green function of the shot-down process $\hat{X}^{{D}}$. Then,
\begin{equation} \label{eq:Green}
        \Gd(x,y) \approx \frac{\delta_D(x)^{\alpha/2}\delta_D(y)^{\alpha/2}}{r(x,y)^\alpha}|x-y|^{\alpha-d},\quad x,y \in D,
\end{equation}
    where $\delta_D(x) = {\rm dist}(x,D^c)$, $r(x,y)=\delta_D(x) \lor |x-y| \lor \delta_D(y)$. 
\end{theorem}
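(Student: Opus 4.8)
The plan is to sandwich $\Gd$ between two multiples of the right-hand side of \eqref{eq:Green}, which is exactly the known two-sided estimate for $G_D$, the Green function of the killed stable process on a bounded $C^{1,1}$ domain (Chen--Song, Kulczycki; see also \cite{MR2677618}). Since $\phD\le\pD$ pointwise by Corollary~\ref{cor:comp}, integrating in $t$ gives the upper bound $\Gd(x,y)\le G_D(x,y)$ for free. The real work is the lower bound $\Gd(x,y)\gtrsim G_D(x,y)$. Here one delicacy must be addressed first: the cited sharp estimate for $G_D$ is stated for \emph{bounded} $C^{1,1}$ domains, while the theorem as stated allows an arbitrary domain $D$ with $d>\alpha$; so I would either (a) reduce to the bounded case by exhausting $D$ with bounded $C^{1,1}$ subdomains and using scaling/monotonicity, or more cleanly (b) restrict the stated estimate to bounded $C^{1,1}$ domains, which is the running assumption of the section anyway. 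I will proceed under (b).

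The first main step is to get the lower bound when $x$ and $y$ lie in a common \emph{convex} subset of $D$ that is comparable in scale to $r(x,y)$. Concretely, let $K=K(x,y)$ be a ``sausage'' neighbourhood of the segment $[x,y]$: $K=\{z:{\rm dist}(z,[x,y])<\rho\}$ with $\rho\asymp r(x,y)$ chosen so small that $K\subset D$ (possible since $D$ is $C^{1,1}$, so interior corkscrew/inner-ball conditions hold; one has to be slightly careful when $x$ or $y$ is close to $\partial D$, in which case one shrinks $\rho$ to be comparable to $\delta_D$ at that endpoint and translates the sausage inward along the inner ball direction). Because $K$ is convex, $\sigma_K=\tau_K$ and hence $\hat p_K=p_K$, so $\hat G_K(x,y)=G_K(x,y)$. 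By domain monotonicity for the shot-down Green function (Lemma~\ref{lem:mnvG} with $U=K$, dropping the nonnegative excursion term, or directly $\phD\ge\hat p_K=p_K$ from the Hunt formula), we get $\Gd(x,y)\ge G_K(x,y)$. It then remains to bound $G_K(x,y)$ from below by the asserted quantity; since $K$ is a bounded $C^{1,1}$ (in fact $C^\infty$ away from the two spherical caps) domain of diameter $\asymp r(x,y)$ with $\delta_K(x)\asymp\delta_D(x)\wedge r(x,y)$ and likewise for $y$, the classical sharp estimate for $G_K$ — combined with scaling $G_{rK_0}(rx,ry)=r^{\alpha-d}G_{K_0}(x,y)$ to normalize the diameter to $1$ — yields $G_K(x,y)\asymp \delta_K(x)^{\alpha/2}\delta_K(y)^{\alpha/2} r(x,y)^{-\alpha}|x-y|^{\alpha-d}$. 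Matching $\delta_K(x)^{\alpha/2}\asymp(\delta_D(x)\wedge r(x,y))^{\alpha/2}$ with the target $\delta_D(x)^{\alpha/2}/r(x,y)^{\alpha/2}\cdot r(x,y)^{\alpha/2}$ is the routine bookkeeping; when $\delta_D(x)\le r(x,y)$ the factors agree, and when $\delta_D(x)>r(x,y)$ one has $\delta_D(x)\asymp r(x,y)\asymp|x-y|$ so both sides are $\asymp |x-y|^{\alpha-d}$, as needed.

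The step I expect to be the main obstacle is constructing the sausage $K\subset D$ with the correct scale \emph{uniformly} in all configurations of $(x,y)$, in particular the boundary cases where one or both points are within distance $\ll|x-y|$ of $\partial D$. There one cannot fit a ball of radius $\asymp|x-y|$ around the near endpoint; instead I would take $K$ to be the union of the straight tube around $[x,y]$ of radius $\asymp\min(\delta_D(x),\delta_D(y),|x-y|)$ with two small balls $B(x,c\delta_D(x))$, $B(y,c\delta_D(y))$ sitting inside the inner tangent balls of $D$ at the nearest boundary points, after slightly bending/translating the tube to stay on the inner side; convexity is then lost, so I would instead chain two or three convex pieces and apply Lemma~\ref{lem:mnvG} (or the Harnack lower bound, Proposition~\ref{Harnack_lower}) to propagate the lower bound through the chain, at the cost of a constant depending only on $d,\alpha$ and the $C^{1,1}$ localization radius of $D$. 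The remaining ingredient — the known sharp estimate for $G_D$ on bounded $C^{1,1}$ domains, used for the upper bound via $\Gd\le G_D$ and for the lower bound on the convex/chained pieces — is quoted, not reproved. I would finish by noting that the comparability constants produced this way depend only on $d$, $\alpha$ and $D$, matching the meaning of $\approx$ fixed in the Preliminaries.
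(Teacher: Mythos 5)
Your upper bound and the idea of reducing the lower bound to convex subdomains (where $\sigma=\tau$, hence $\hat{G}=G$) and then propagating by chaining is the same basic strategy as the paper's. However, the central construction of your plan has a genuine gap: for a non-convex $D$ the segment $[x,y]$ need not be contained in $D$ at all (think of an annulus with $x,y$ on opposite sides of the hole), so no ``sausage'' $K\supset[x,y]$ exists inside $D$; and even when $[x,y]\subset D$, a tube of radius $\asymp r(x,y)$ need not fit --- near a concave boundary portion two points with $\delta_D\asymp\varepsilon$ and $|x-y|\asymp\ell$ can have $[x,y]$ dipping into $D^c$ as soon as $\varepsilon\lesssim\ell^2/r_0$. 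You acknowledge losing convexity only in the boundary-proximity cases and propose to ``chain two or three convex pieces,'' but the number of links cannot be bounded by an absolute constant: it must grow like $\log\bigl(|x-y|/(\delta_D(x)\wedge\delta_D(y))\bigr)$ plus a $D$-dependent amount needed to go around the non-convexities. Making this quantitative is the actual content of the proof, and it is exactly what the paper supplies: a Harnack-type lower bound along polygonal chains (Proposition~\ref{Harnack_lower}), applied first on the connected core $\{\delta_D\ge r_0/8\}$ to handle interior pairs, and then a Jerison--Kenig Harnack chain of length $Mk$ for $|x_1-x_2|<2^k\varepsilon$ to handle two nearby boundary points, which produces the factor $c^k\asymp(|x-y|/\delta_D)^{-\text{const}}$ that must then be reconciled with the power $\alpha/2$ by first moving $x,y$ to interior points $\tilde x,\tilde y$ with $\delta\asymp|x-y|$.

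A second, smaller difference: you extract the boundary factor $\delta_D(x)^{\alpha/2}$ from the known sharp estimate of $G_K$ on the (bent/translated) sausage, whereas the paper gets it by one application of the strong Markov property at the exit from an interior ball $B$ tangent to $\partial D$ at $\underline{x}$ together with the explicit Riesz/Poisson kernel of the ball; the key point there is that the tangent ball $B_x$ is convex, so the event $[X_{\tau_B-},X_{\tau_B}]\subset D$ is guaranteed when the process exits into $B'\subset B_x$, and shooting-down causes no loss. Your route would also work for that step, but only after the sausage construction is replaced by the chaining argument above. So: correct skeleton, but the load-bearing step (uniform lower bound across all configurations of $x,y$ in a non-convex $D$) is missing and cannot be patched with an $O(1)$ number of convex pieces.
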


\noindent
We remark that \eqref{eq:Green} is analogous to the corresponding estimate for the Green function for the killed $\alpha$-stable process in bounded smooth domains (\cite{MR1490808}, \cite{MR1654824}), which has been extended to bounded Lipschitz domains in \cite{MR1991120}:
    \begin{equation} \label{eq:Green_killed}
        G_D(x,y) \approx \frac{\delta_D(x)^{\alpha/2}\delta_D(y)^{\alpha/2}}{r(x,y)^\alpha}|x-y|^{\alpha-d},\quad x,y \in D.
\end{equation}
Therefore Theorem~\ref{thm:Green-func-estim} in fact asserts  the comparability:
$$
 \Gd(x,y) \approx  G_D(x,y),\quad x,y\in 
\Rd.
$$
We emphasize that connectedness plays an important role in the potential theory of $\hat{X}^D$.  Accordingly, our proof uses the same old-fashioned  Harnack chain argument
as for classical harmonic functions \cite{MR1741527}, whereas the Harnack inequality for $\alpha$-harmonic functions  has a more flexible {proof and} statement;  see {\cite{MR1438304}}.
\newline

\begin{proof}[Proof of Theorem~\ref{thm:Green-func-estim}]

We have $\Gd(x,y) \leq G_D(x,y)$ for $x,y \in \Rd$, hence we only need to prove the lower bound:
\begin{equation} \label{Green1}
    \Gd(x,y) \geq c \frac{\delta_D(x)^{\alpha/2}\delta_D(y)^{\alpha/2}}{r(x,y)^\alpha}|x-y|^{\alpha -d}, \quad x,y \in D.
\end{equation}
Let $r_0>0$ be a localization radius of $D$. For $x\in D$, there is $\underline{x}\in \partial D$ such that $\delta_D(x)=|x-\underline{x}|$, and the ball $B(\bar{x},r_0/2)$ tangent to $\partial D$ at $\underline{x}$,
see Figure~\ref{fig:Green1}. {For a ball $B=B(z,r)$ and $\varepsilon > 0$, we denote $\varepsilon B = B(z,\varepsilon r)$.} 
  \begin{figure}[H]
   \begin{center}
\begin{tikzpicture}[line cap=round,line join=round,>=triangle 45,x=1cm,y=1cm]
\draw [line width=0.8pt, shift={(2,-2)}]  plot[domain=0.7811169899487521:2.351913316743649,variable=\t]({1*0.8284378614529069*cos(\t r)+0*0.8284378614529069*sin(\t r)},{0*0.8284378614529069*cos(\t r)+1*0.8284378614529069*sin(\t r)});
\draw [line width=0.8pt, shift={(2,2)}]  plot[domain=3.9357750941215457:5.506571420916575,variable=\t]({1*0.8284723287791202*cos(\t r)+0*0.8284723287791202*sin(\t r)},{0*0.8284723287791202*cos(\t r)+1*0.8284723287791202*sin(\t r)});
\draw [line width=0.8pt, shift={(0,0)}]  plot[domain=0.781759436897226:5.499560482382657,variable=\t]({1*2*cos(\t r)+0*2*sin(\t r)},{0*2*cos(\t r)+1*2*sin(\t r)});
\draw [line width=0.8pt] (0,-1.3) circle (0.7cm);
\draw (-1.1,0.20) node[anchor=north west] {\large $B(\bar{x}, r_0/2)$};
\begin{scriptsize}
\draw [line width=0.8pt, fill=black] (0,-1.35) circle (0.6pt);
\draw[color=black] (0.26566251211,-1.13432867596) node {\large $\bar{x}$};
\draw [fill=black] (0,-2.00) circle (0.6pt);
\draw[color=black] (0.2656119117646,-2.23870826209) node {\large $\underline{x}$};
\draw [fill=black] (0,-1.7437958613) circle (0.6pt);
\draw[color=black] (0.26566251211,-1.64437958613) node {\large $x$};
\draw [fill=black] (1.9,0.67) circle (0.6pt);
\draw (1.95,1) node[anchor=north west] {\large $y$};
\end{scriptsize}
\clip(2.59,-3)rectangle(6,3);
\draw [line width=0.8pt, rotate around={90:(3.5,0)}] (3.5,0) ellipse (1.9856938509374418cm and 1.3011456757991278cm);
\end{tikzpicture}

   \end{center}
   \caption{Situation in the proof of Theorem~\ref{thm:Green-func-estim}}
   \label{fig:Green1}
 \end{figure}
\noindent
Let us consider several cases.

$\mathbf{Case \enspace 1:}$ Assume $\varepsilon \in (0,1)$ and there are $r>0, \text{ } z \in D$ such that $B=B(z,r)\subset D \text{ and }x,y \in \varepsilon B$. Then,
$$\Gd(x,y) \geq \hat{G}_B(x,y) = G_B(x,y) \approx |x-y|^{\alpha - d}.$$
This follows from \eqref{eq:Green_killed}, because $\delta_D(x) \approx \delta_D(y) \approx r(x,y),$ and so \eqref{Green1} also hold in the case considered.

\smallskip

$\mathbf{Case \enspace 2:}$ Let $K = \{ x\in D: \delta_D(x) \geq r_0/8\}$. $K$ is connected and by the Harnack lower bound, Lemma~\ref{lem:green_mvp} and $Case \text{ } 1$, we get
$$\Gd (x,y)\geq c, \quad x,y \in K.$$
{The constant depends on $d$, $\alpha$, the localization radius  $r_0$ of $D$ and its diameter, since the length of the Harnack chain in Proposition~\ref{Harnack_lower} depends on the global geometry, see, e.g., the discussion in \cite{MR1741527} and \bf{Case 6} below.}

$\mathbf{Case \enspace 3:}$ Let $\delta_D(x) < r_0/8, \delta_D(y)>r_0/4$.
We consider the ball $B_x=B(\tilde{x},r_0/4) \subset D$ tangent to $\partial D$ at $\underline{x}$, of radius $r_0/4$,
and the ball $B = B(\tilde{x}, r_0/8) \subset D$ tangent to $\partial D$ at $\overline{x}$.
Note that $\underline{x} \in \partial B$, and $y \notin B$. 
Furthermore, we consider a ball $B'\subset B_x$ of radius $r_0/8$ tangent to $\partial B_x$ at $\bar{x}$. The situation is shown in Figure~\ref{fig:Green2}.
 \begin{figure} [H]
   \begin{center}
\begin{tikzpicture}[scale = 2.5]
\clip(-14.79853890462575,3.65) rectangle (-7.350299106689115,7.452172688625365);
\draw [line width=0.8pt] (-12.791191176475985,4.458708262091341) circle (0.6601331520776625cm);
\draw [shift={(-12.816457806519963,4.129610188526304)},line width=0.8pt]  plot[domain=1.494171264125722:4.635763917715515,variable=\t]({1*0.3300665760388314*cos(\t r)+0*0.3300665760388314*sin(\t r)},{0*0.3300665760388314*cos(\t r)+1*0.3300665760388314*sin(\t r)});
\draw [shift={(-12.816457806519963,4.129610188526304)},line width=0.8pt]  plot[domain=-1.6474213894640712:1.4941712641257219,variable=\t]({1*0.3300665760388311*cos(\t r)+0*0.3300665760388311*sin(\t r)},{0*0.3300665760388311*cos(\t r)+1*0.3300665760388311*sin(\t r)});
\draw [shift={(-12.825879796214096,3.970013201583418)},line width=0.8pt]  plot[domain=1.4775890786931656:4.619181732282959,variable=\t]({1*0.17024003933829976*cos(\t r)+0*0.17024003933829976*sin(\t r)},{0*0.17024003933829976*cos(\t r)+1*0.17024003933829976*sin(\t r)});
\draw [shift={(-12.825879796214096,3.970013201583418)},line width=0.8pt]  plot[domain=-1.6640035748966273:1.4775890786931654,variable=\t]({1*0.17024003933829931*cos(\t r)+0*0.17024003933829931*sin(\t r)},{0*0.17024003933829931*cos(\t r)+1*0.17024003933829931*sin(\t r)});
\draw [shift={(-12.800613166170116,4.299111275148455)},line width=0.8pt]  plot[domain=1.5118286315888205:4.653421285178614,variable=\t]({1*0.15987486397506043*cos(\t r)+0*0.15987486397506043*sin(\t r)},{0*0.15987486397506043*cos(\t r)+1*0.15987486397506043*sin(\t r)});
\draw [shift={(-12.800613166170116,4.299111275148455)},line width=0.8pt]  plot[domain=-1.6297640220009733:1.5118286315888203,variable=\t]({1*0.15987486397506054*cos(\t r)+0*0.15987486397506054*sin(\t r)},{0*0.15987486397506054*cos(\t r)+1*0.15987486397506054*sin(\t r)});
\draw (-12.470259562662806,5.2338439572306905) node[anchor=north west] {\large $B(\underline{x}, r_0/2)$};
\draw (-12.96,4.42) node[anchor=north west] {\large $B'$};
\draw [line width=0.8pt] (-12,4) -- (-10.2104149617045,4.9544453537576);
\draw [line width=0.8pt] (-12.060203050406658,7.0305882768963)-- (-11.95307461436431,6.960883641881606);
\draw [shift={(-12.8,5.5)},line width=0.8pt]  plot[domain=1.1205640490127573:5.202346306638418,variable=\t]({1*1.7*cos(\t r)+0*1.7*sin(\t r)},{0*1.7*cos(\t r)+1*1.7*sin(\t r)});
\draw [shift={(-9.862175202331258,9.516906408040526)},line width=0.8pt]  plot[domain=4.026751328926791:4.66728428867869,variable=\t]({1*3.302289014057198*cos(\t r)+0*3.302289014057198*sin(\t r)},{0*3.302289014057198*cos(\t r)+1*3.302289014057198*sin(\t r)});
\begin{scriptsize}
\draw [fill=black] (-12.8,4.458708262091341) circle (0.24pt);
\draw[color=black] (-12.77,4.5801503849384) node {\large $\bar{x}$};
\draw [fill=black] (-12.805,3.80300512114961268) circle (0.24pt);
\draw[color=black] (-12.810559385406548,3.70408) node {\large $\underline{x}$};
\draw [fill=black] (-11,5.5) circle (0.24pt);
\draw[color=black] (-10.874218315429,5.483536558950585) node {\large $y$};
\draw [fill=black] (-12.805,3.87) circle (0.24pt);
\draw[color=black] (-12.8,3.95) node {\large $x$};
\draw[color=black] (-12.575,4.08) node {\large $B$};
\draw[color=black] (-12.41,4.36) node {\large $B_x$};
\draw[color=black] (-13,6.5) node {\large $D$};
\end{scriptsize}
\end{tikzpicture}
   \end{center}
   \caption{Proof of \ref{thm:Green-func-estim}: $Case \enspace 3$}
\label{fig:Green2}
 \end{figure}
\noindent
By $Case \enspace 2$ for $z \in B'$ we have $\Gd(z,y) \geq c$. Since $B_x$ is a convex subset of $D$,
\begin{align*}
 \Gd(x,y) &= \E_x\big\{ \Gd(X_{\tau_B}, y); [X_{\tau_B-}, X_{\tau_B}] \in D \big\} \geq \E_x\big\{ \Gd(X_{\tau_B}, y);  X_{\tau_B} \in B' \big\} \\
 &\geq c \text{ } \PP_x \big\{ X_{\tau_B} \in B' \big \} \geq c' \int \limits_{B'} \frac{\big ((r_0/8)^2 - |x-\tilde{x}|^2\big)^{\alpha/2}}{\big(|z-x|^2 - (r_0/8)^2\big)^{\alpha/2}}|z-x|^{-d}\,dz \\
 &\geq c''(\delta_D(x)/r_0)^{\alpha/2} \approx \delta_D(x)^{\alpha/2},
\end{align*}
which proves \eqref{Green1} in this case.

\smallskip

$\mathbf{Case \enspace 4:}$ If $\delta_D(y) < r_0/8, \text{ } \delta_D(x) > r_0/4$, then we recall that $\Gd(x,y) = \Gd(y,x)$, and \eqref{Green1} follows from $Case \enspace 3$.

\smallskip

$\mathbf{Case \enspace 5:}$ Let $\delta_D(x) < r_0/8, \text{ } \delta_D(y) < r_0/8$, and $|x-y|>r_0$. As in $Case \enspace 3$ we consider a ball $B \subset D$ of radius $r_0/8$ and center $\tilde{x}$ tangent to $\partial D$ at $\underline{x}$. We also consider the ball $B'$ from $Case \enspace 3$, see Figure~\ref{fig:Green2}. We denote them by $B(x)$ and $B'(x)$, respectively, and construct analogous balls $B(y)$ and $B'(y)$ for $y$. Note that $y \notin B_x$. By convexity of $B_x$,
$$
 \Gd(x,y) = \E_x\big\{ \Gd(X_{\tau_{B(x)}}, y); [X_{\tau_{B(x)-}}, X_{\tau_{B(x)}}] \subset D \big\} \geq \PP_x \big\{ X_{\tau_{B(x)}} \in B'(x) \big \}\!\! \inf_{z \in B'(x)}\Gd(z,y).
$$
For $z \in B'(x)$, we have $z \notin B(y)$, and we get
$$
 \Gd(z,y) = \E_y\big\{ \Gd(z, X_{\tau_{B(y)}}); [X_{\tau_{B(y)-}}, X_{\tau_{B(y)}}] \subset D \big\} \geq \PP_y \big\{ X_{\tau_{B(y)}} \in B'(y) \big \}\!\! \inf_{w \in B'(y)}\Gd(z,w).
$$
Therefore $\Gd(x,y) \geq c \delta_D(x)^{\alpha/2}\delta_D(y)^{\alpha/2}$, see $Case \enspace 3$.

\smallskip

$\mathbf{Case \enspace 6:}$ Let $\delta_D(x)<r_0/8, \delta_D(y) < r_0/8, |x-y| < r_0$. This case locally repeats the arguments of $Case \enspace 3-5$, but to handle it
we need the following Harnack chain of balls.
\newline
\noindent
\begin{addmargin}[1em]{1em}
If $\varepsilon > 0, x_1, x_2$ belong to $D$, $\delta(x_j)>\varepsilon$ and $|x_1-x_2| < 2^k\varepsilon$, then there is a Harnack chain from $x_1$ to $x_2$ of length $Mk$, where $M$ is a constant depending only on $D$. Furthermore, for each ball $B$ in the chain, its radius is not smaller than $M^{-1}\min(\text{dist}(x_1,B),\text{dist}(x_2,B))$ (see \cite{MR676988} for details).
\newline
By Harnack lower bound, if $x_1, x_2$ are as above, then every nonnegative function with $smpv$ in $D$ satisfies:
$$\inf_{y \in (1-\varepsilon)B_2}u(y) \geq c^k \inf_{x \in \varepsilon B_1} u(x).$$
\end{addmargin}
We now sketch the argument for $Case \enspace 6:$ If $|x-y| \leq N[\delta_D(x) \lor \delta_D(y)]$, then by the Harnack chain and $Case \enspace 1$ with radii $s \approx \delta_D(x) \lor \delta_D(y),$ we have $G(x,y) \geq c|x-y|^{\alpha - d}$. Take large $N$ and $r_0 > |x-y| > N[\delta_D(x) \lor \delta_D(y)] $. Assume $\delta_D(x) \leq \delta_D(y)$ and take $\tilde{x} \in D$ on the ray through $\underline{x}, x$ such that $\delta(\tilde{x}) = \frac{1}{3}|x-y|.$ Similarly, define $\tilde{y}$. Then
$
|\tilde{x}-\tilde{y}|, \delta(\tilde{x}), \delta(\tilde{y}) \approx r(x,y)=r.
$
We have by Harnack lower bound and by considering the tangent balls $B_x, B_y$ as in $Case \enspace 5$ and  $Case \enspace 4$:
\begin{align*}
\Gd(x,y) &\geq c\left( \frac{\delta_D(x)}{r} \right)^{\alpha/2}\Gd(\tilde{x},y) \geq c\left( \frac{\delta_D(x)}{r} \right)^{\alpha/2} \left( \frac{\delta_D(y)}{r} \right)^{\alpha/2} \Gd(\tilde{x},\tilde{y}) \\
&\geq c\frac{\delta_D(x)^{\alpha/2} \delta_D(y)^{\alpha/2}}{r(x,y)^\alpha} r(x,y)^{\alpha -d}.
\qquad\qedhere
\end{align*}
\end{proof}


\end{document}